\patchcmd{\subsubsection}{\itshape}{\bfseries}{}{}
\renewcommand{\epsilon}{\varepsilon} 
\renewcommand{\phi}{\varphi}
\def\ge{\geqslant}
\def\le{\leqslant}
\def\ol{\overline}
\def\kratno{\lower.5ex\hbox{$\,\vdots\,$}}
\def\eps{\epsilon}
\def\ls{\lesssim}
\def\gs{\gtrsim}
\def\q#1.{\smallbreak\noindent\hskip15pt{\bf#1.}\enspace\ignorespaces} 
\def\dotline{\smallskip\hbox to \hsize{\dotfill}\medskip}
\def\norm[#1]{\left\| #1 \right\|}
\newcommand{\R}{\mathbb{R}}
\newcommand{\K}{\mathcal{K}}
\newcommand{\cI}{\mathcal{I}}
\newcommand{\cJ}{\mathcal{J}}
\newcommand{\bP}{\mathbf{P}}
\newcommand{\cR}{\mathcal{R}}
\newcommand{\bK}{\mathbf{K}}
\newcommand{\bm}{\mathbf{m}}
\newcommand{\T}{\mathbb{T}}
\newcommand{\D}{\mathbb{D}}
\newcommand{\Cm}{\mathbb{C}}
\newcommand{\ilim}{\int\limits}
\renewcommand{\Im}{\mathop{\mathrm{Im}}\nolimits}
\renewcommand{\Re}{\mathop{\mathrm{Re}}\nolimits}
\newcommand{\Res}{\mathop{\mathrm{Res}}\nolimits}
\newcommand{\dm}{\mathop{d\mathrm{m}}\nolimits}
\newcommand{\mm}{\mathop{\mathrm{m}}\nolimits}
\newcommand{\loc}{\mathop{\mathrm{loc}}\nolimits}
\renewcommand{\d}{\partial}
\newcommand{\dist}{\mathop{\mathrm{dist}}\nolimits}
\newcommand{\trace}{\mathop{\mathrm{trace}}\nolimits}
\newcommand{\Szego}{Szeg\H{o} }
\theoremstyle{plain}
\newtheorem{thm}{Theorem} 
\newtheorem{knownthm}{Theorem} 
\newtheorem{lemma}{Lemma}
\theoremstyle{definition}
\newtheorem{cor}{Corollary}
\theoremstyle{remark}
\newtheorem*{rem}{Remark}
\title{Dirac operators with exponentially decaying entropy}
\author{Pavel Gubkin}
\address{
\begin{flushleft}
    Pavel Gubkin: gubkinpavel@pdmi.ras.ru\\\vspace{0.1cm}
    St. Petersburg State University \\
    Universitetskaya nab. 7-9, St. Petersburg, 199034, Russia\\\vspace{0.1cm}
    St. Petersburg Department of Steklov Mathematical Institute\\
    Russian Academy of Sciences\\
    Fontanka 27, St. Petersburg, 191023, Russia
\end{flushleft}
}
\thanks{The work is supported by Ministry of Science and Higher Education of the Russian Federation, agreement 075–15–2022–287}
\subjclass{34L40}
\keywords{Krein system, Dirac operator, Entropy function}
\begin{document}

\newgeometry{left=20mm,right=20mm, top = 20mm}
\renewcommand{\theknownthm}{\Alph{knownthm}}
\renewcommand{\theconj}{\Roman{conj}}
\renewcommand{\theknownlemma}{\Alph{knownlemma}}

\begin{abstract}
    We prove that the Weyl function of the one-dimensional Dirac operator on the half-line $\R_+$ with exponentially decaying entropy extends meromorphically into the horizontal strip $\{0\ge \Im z > -\delta\}$ for some $\delta > 0$ depending on the rate of decay. If the entropy decreases very rapidly then the corresponding Weyl function turns out to be meromorphic in the whole complex plane. In this situation we show that poles of the Weyl function (scattering resonances) uniquely determine the operator. 
\end{abstract}

\maketitle

\section{Introduction}
The main object of the present paper is the one-dimensional Dirac operator $D_Q$ on the positive half-line $\R_+ = [0,\infty)$,
\begin{gather}\label{one dimensional Dirac operator of the half-line}
    D_Q = J\frac{d}{dr} + Q,
\end{gather}
where $ Q = \left(\begin{smallmatrix}-q&p\\p&q\end{smallmatrix}\right)$ is a symmetric $2\times 2$ zero-trace real potential and $J = \left(\begin{smallmatrix}0&-1\\1&0\end{smallmatrix}\right)$ is the square root of the minus identity matrix.  We will use the entropy approach developed by R. Bessonov and S. Denisov in \cite{Bessonov2020}, \cite{Bessonov2021}, \cite{Bessonov2021entropy} to prove a version of the theorem by P. Nevai and V. Totik from \cite{Nevai1989} for Dirac operator \eqref{one dimensional Dirac operator of the half-line}. We start with the formulation of the original  Nevai-Totik theorem to explain the motivation of the work and then introduce all necessary objects from the spectral theory of Dirac operators and state the results of the present paper. 
\bigskip

\subsection{Orthogonal polynomials on the unit circle. Nevai-Totik theorem}\label{Section orthogonal polynomials}
The basics of the theory of orthogonal polynomials on the unit circle can be found in the books \cite{Szego} by G. \Szego and \cite{Simon} by B. Simon. In this section we give some definitions and formulate the result by P. Nevai and V. Totik from \cite{Nevai1989}, where they proved that the rate of exponential decay of the recurrence coefficients is the same as the radius of analyticity of the inverse \Szego function.
\medskip

Let $\mu =w\dm + \mu_s$ be a probability measure on the unit circle $\T = \{\zeta\colon |\zeta| = 1\}$; here $\mm$ is a normalized Lebesgue measure on the unit circle,  $w$ and $\mu_s$ are the density and the singular part of $\mu$ with respect to $\mm$. Assume that $\mu$ is nontrivial, i.e., that the support of $\mu$ is not a finite set. Then the functions $1,z,z^2\ldots$ are linearly independent in $L^2(\mu)$ and therefore there exists a sequence $\{\Phi_n\}_{n\ge 0}$ of monic polynomials orthogonal in $L^2(\mu)$. They satisfy
\begin{gather*}
    \Phi_{n + 1}(z) = z\Phi_n(z) - \ol{\alpha_n}  \Phi_n^*(z),\quad z \in \Cm,
\end{gather*}
where $\Phi_n^*(z) = z^n\ol{\Phi_n(1/\ol{z})}$ and $\alpha_n\in \D=\{\omega\colon |\omega| < 1\}$ -- the recurrence coefficients. The \Szego theorem states that $\mu$ belongs to the \Szego class, i.e., $\displaystyle\log w \in L^1(\T)$ if and only if $\sum_{n\ge 0}|\alpha_n|^2 < \infty$. Moreover, in this case for every $z\in\D$ the sequence $\Phi_n^*(z)/\norm[\Phi_n^*]_{L^2(\mu)}$ converges to $\Pi(z)$, where $\Pi$ is an outer function in the unit disc satisfying ${|\Pi(\zeta)|^{-2}=w(\zeta)}$ for almost every $\zeta\in \T$, see Theorem 2.3.5 in \cite{Simon}. If $\mu$ is an a.\,c.\ measure from the \Szego class on the unit circle let
\begin{gather*}
    r_{\Pi} = \inf\left\{r \colon \Pi \text{ is analytic in } \{|w| < r^{-1}\}\right\}
\end{gather*}
be the inverse radius of analyticity of $\Pi$. Otherwise put $r_{\Pi} = 1$. Also let
\begin{gather*}
    r_{\alpha} = \limsup_{n\to\infty} |\alpha_n|^{1/n}
\end{gather*}
be the rate of exponential decay of the recurrence coefficients.
\begin{knownthm}[Nevai-Totik theorem; Theorem 1, \cite{Nevai1989}; Chapter 7, \cite{Simon}]\label{theorem Nevai Totik}
        For any nontrivial probability measure $\mu$ on the unit circle we have $r_{\alpha} = r_{\Pi}$.
\end{knownthm}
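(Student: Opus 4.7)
The plan is to establish both inequalities $r_\Pi \le r_\alpha$ and $r_\alpha \le r_\Pi$ separately, using in both directions the normalized Szeg\H{o} recursions for the orthonormal polynomials $\phi_n = \Phi_n/\|\Phi_n\|_{L^2(\mu)}$. With $\rho_n = \sqrt{1-|\alpha_n|^2}$ and $\phi_n^*(z) = z^n\overline{\phi_n(1/\bar z)}$ they read
\begin{gather*}
    \phi_{n+1}^*(z) = \rho_n^{-1}\bigl(\phi_n^*(z) - \alpha_n z\phi_n(z)\bigr), \qquad \phi_{n+1}(z) = \rho_n^{-1}\bigl(z\phi_n(z) - \bar\alpha_n\phi_n^*(z)\bigr).
\end{gather*}
In the Szeg\H{o} class $\sum|\alpha_n|^2<\infty$, so $\prod_n \rho_n$ converges, and $\phi_n^* \to \Pi$ locally uniformly on $\D$ (in particular $\sup_n\sup_{|z|\le 1}|\phi_n^*(z)|<\infty$).

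For $r_\Pi \le r_\alpha$, fix $r\in(r_\alpha,1)$ and $R\in(1,1/r)$, so that $|\alpha_n|\le Cr^n$ eventually. Because $|z\phi_n(z)| = |z|^{n+1}|\phi_n^*(1/\bar z)|$ and $1/\bar z$ lies in the closed unit disk for $|z|\ge 1$, the recursion combined with a bootstrap on $\sup_{|z|\le R}|\phi_n^*(z)|$ (controlled by the convergent sums $\sum|\alpha_k|^2$ and $\sum R(rR)^k$) gives
\begin{gather*}
    \sup_{|z|\le R}\bigl|\phi_{n+1}^*(z) - \phi_n^*(z)\bigr| \lesssim |\alpha_n|^2 + R(rR)^n,
\end{gather*}
which is summable because $rR<1$. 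Hence $\phi_n^*$ converges uniformly on $\{|z|\le R\}$ to an analytic extension of $\Pi$, so $r_\Pi \le R^{-1}$. Letting $R\uparrow 1/r$ and $r\downarrow r_\alpha$ gives $r_\Pi\le r_\alpha$.

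For $r_\alpha \le r_\Pi$, fix $r>r_\Pi$ and $R\in(1,1/r)$, so $\Pi$ is analytic on $\{|z|\le R\}$. From $\Phi_n^*(0)=1$ and the monic recursion we get $\Phi_{n+1}(0)=-\bar\alpha_n$, hence $\alpha_n = -\|\Phi_{n+1}\|_{L^2(\mu)}\,\overline{\phi_{n+1}(0)}$, with the normalization factor bounded away from $0$ and $\infty$ in the Szeg\H{o} class. Since $\overline{\phi_{n+1}(0)}$ is the top Taylor coefficient of the polynomial $\phi_{n+1}^*$, Cauchy's formula yields
\begin{gather*}
    |\alpha_n| \lesssim R^{-(n+1)}\sup_{|z|=R}|\phi_{n+1}^*(z)|,
\end{gather*}
and the task reduces to showing $\sup_{|z|=R}|\phi_n^*(z)| = O(1)$. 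The idea is to upgrade $\phi_n^* \to \Pi$ from $\D$ to uniform convergence on $\{|z|\le R\}$ by rewriting the recursion as $\phi_{n+1}^* - \Pi = (\phi_n^* - \Pi) + (\rho_n^{-1}-1)\phi_n^* - \rho_n^{-1}\alpha_n z\phi_n$ and running a successive-approximation bootstrap driven by the known analyticity of $\Pi$ on $\{|z|\le R\}$. Combined with Cauchy this gives $|\alpha_n|\lesssim R^{-n}$; letting $R\uparrow 1/r$ and $r\downarrow r_\Pi$ yields $r_\alpha \le r_\Pi$.

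The main obstacle lies in the reverse direction: the first argument was easy because $|\alpha_n|\le Cr^n$ was assumed a priori, whereas here one must extract this decay from the mere analyticity of $\Pi$. The apparent circularity is resolved by a Nevai--Totik asymptotic identity of the form $\alpha_n = -\overline{\pi_{n+1}} + (\text{error of strictly smaller exponential order})$, where $\pi_k$ are the Taylor coefficients of a normalization of $\Pi$ at $0$; together with Hadamard's formula $\limsup |\pi_k|^{1/k} = r_\Pi$ this delivers $r_\alpha \le r_\Pi$. Executing the successive-approximation scheme for the Szeg\H{o} recursion with sharp enough error control---so that the error term has exponential order beyond $r_\Pi^n$---is the technical heart of the proof.
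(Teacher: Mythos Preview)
Your proposal and the paper's argument diverge sharply, and the divergence is instructive.

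First, the paper does not give a full proof of Theorem~A; it proves only the inequality $r_\alpha \le r_\Pi$, and does so by a method deliberately chosen because it transfers to Krein systems. The paper even remarks that the relation $\alpha_n = -\overline{\Phi_{n+1}(0)}$, which is the backbone of your second half, has no analogue in that setting and is therefore avoided.

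The paper's argument for $r_\alpha \le r_\Pi$ goes through the Christoffel minimizing quantity
\[
\lambda_n(0)=\min\bigl\{\|P\|_{L^2(\mu)}^2:\ \deg P\le n,\ P(0)=1\bigr\}=\prod_{k=0}^{n-1}(1-|\alpha_k|^2),
\qquad \lambda_\infty(0)=|\Pi(0)|^{-2}.
\]
One plugs the normalized Taylor polynomial $h_n/h_n(0)$ of $\Pi$ as a test function; because $d\mu=|\Pi|^{-2}d\mathrm{m}$, the cross term $\int_\T 2\Re\!\left(\frac{h_n-\Pi}{\Pi}\right)d\mathrm{m}$ vanishes by analyticity of $(h_n-\Pi)/\Pi$ in $\D$, leaving only the square $\int_\T|(h_n-\Pi)/\Pi|^2 d\mathrm{m}=O(r_0^{-2n})$ for any $r_0<r_\Pi^{-1}$. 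Hence $\lambda_n(0)-\lambda_\infty(0)=O(r_0^{-2n})$, which forces $|\alpha_n|=O(r_0^{-n})$. There is no bootstrap and no circularity: the only input is analytic continuation of $\Pi$, and the decay of $\alpha_n$ falls out of a one-line variational estimate.

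Your direction $r_\Pi\le r_\alpha$ via the Szeg\H{o} recursion is fine and standard. But your treatment of $r_\alpha\le r_\Pi$ has a genuine gap. You correctly identify the obstacle: to bound $\sup_{|z|=R}|\phi_n^*(z)|$ you need decay of $\alpha_n$, which is exactly what you are trying to prove. Your resolution is to invoke ``a Nevai--Totik asymptotic identity'' $\alpha_n=-\overline{\pi_{n+1}}+(\text{smaller order})$ without deriving it, and to declare the error analysis ``the technical heart of the proof.'' That is not a proof; it is a pointer to someone else's proof. The Christoffel-function argument in the paper is precisely the device that breaks this circularity cleanly, and it does so without ever touching the recursion in the disk $\{|z|>1\}$.
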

Methods of the theory of orthogonal polynomials on the unit circle can be applied to the spectral theory of Dirac operators by means of Krein systems, see Section \ref{section Krein systems}. In the original proof of Theorem \ref{theorem Nevai Totik} the relation $\alpha_n = -\ol{\Phi_{n + 1}(0)}$ plays a significant role. It does not have an analogue for Krein systems. Let us give a prove of the inequality $r_{\alpha} \le r_{\Pi}$ in a way that will later work in the setting of spectral theory. The idea of the proof is similar to the idea in Chapter 12.3  of the \Szego's book \cite{Szego} where the asymptotic behaviour of orthogonal polynomials is studied via the Christoffel minimizing functions
\begin{gather}
    \label{Christoffel minimizing function OPUC}
    \lambda_n (z) = \lambda_n (\mu, z) = \min\left\{\norm[P]^2_{L^2(\mu)}\colon \deg P\le n,\; P(z) = 1 \right\},\quad z\in \Cm.
\end{gather}
If $r_{\Pi} = 1$ then the claim $r_{\alpha} \le r_{\Pi}$ follows immediately.
Assume that $r_{\Pi} < 1$, then $\mu$ belongs to the \Szego class, $\Pi$ is well-defined and $d\mu(\zeta) = |\Pi(\zeta)|^{-2}\dm$.
We have, see Chapter 2.2 in \cite{Simon},
\begin{gather}
    \label{lambda n formula}
    \lambda_n(0) = \prod\limits_{n = 0}^{n-1}(1 - |\alpha_n|^2),
    \qquad
    \lambda_{\infty}(0) = \inf_n\lambda_n(0)  = |\Pi(0)|^{-2}.
\end{gather}
Fix an arbitrary number $r_0\in [1, r_{\Pi}^{-1})$. The Taylor series of $\Pi$ converges absolutely in $\{|z| \le r_0\}$ and $\Pi$ does not have zeroes on $\T$, because the converse contradicts the assumption $\mu(\T) < \infty$. Hence for $\zeta\in \T$ we  the uniform bound
\begin{gather}
    \label{taylor polynomial convergence}
    \left|\frac{\Pi(\zeta) - h_n(\zeta)}{\Pi(\zeta)}\right| =  O\left(r_0^{-n}\right),\quad n\to\infty
\end{gather}
holds, where $h_n$ is the $n$-th Taylor polynomial of $\Pi$. Substituting $h_n/h_n(0)$ into \eqref{Christoffel minimizing function OPUC}, we get
\begin{align*}
    \lambda_n(0) &\le \norm[\frac{h_n}{h_n(0)}]^2_{L^2(\mu)}
    = \frac{1}{|h_n(0)|^2}\int_{\T} \left|h_n(\zeta)\right|^2d\mu(\zeta)
    \\
    &= \frac{1}{|\Pi(0)|^2}\int_{\T} \left|h_n(\zeta)\Pi^{-1}(\zeta)\right|^2\dm(\zeta)  = \frac{1}{|\Pi(0)|^2}\int_{\T} \left|1 + \frac{h_n(\zeta) - \Pi(\zeta)}{\Pi(\zeta)}\right|^2\dm(\zeta) 
    \\
    &=\frac{1}{|\Pi(0)|^2}\int_{\T} 1 + 2\Re\left(\frac{h_n(\zeta) - \Pi(\zeta)}{\Pi(\zeta)}\right) + \left|\frac{h_n(\zeta) - \Pi(\zeta)}{\Pi(\zeta)}\right|^2\dm(\zeta).
\end{align*}
Since $\displaystyle \frac{h_n - \Pi}{\Pi}$ is analytic in $\D$, the second term vanishes after the integration. Therefore we have
\begin{align*}
    \frac{1}{|\Pi(0)|^2}\stackrel{\eqref{lambda n formula}}{=}\lambda_n(0)&\le \frac{1}{|\Pi(0)|^2}\int_{\T} 1 +  \left|\frac{h_n(\zeta) - \Pi(\zeta)}{\Pi(\zeta)}\right|^2\dm(\zeta) \stackrel{\eqref{taylor polynomial convergence}}{=} \frac{1}{|\Pi(0)|^2} + O\left(r_0^{2n}\right),\quad n\to\infty. 
\end{align*}
We see that $\lambda_n(0) = \prod_{n = 0}^{n-1}(1 - |\alpha_n|^2)$ is exponentially close to $\lambda_{\infty}(0) = \prod_{n = 0}^{\infty}(1 - |\alpha_n|^2)$. Hence
$|\alpha_n| =O\left(r_0^{-n}\right) $ as $n\to\infty$. Consequently
$r_{\alpha}\le r_0^{-1}$ and the inequality $r_{\alpha} \le r_{\Pi}$ follows because $r_0$ can be taken arbitrary close to $r_{\Pi}^{-1}$.
\medskip

\medskip
The sequence $\lambda_n(0)$ is connected to the entropy function $\K(n) = -\log \prod_{k\ge n}(1 - |\alpha_n|^2)$ introduced by R. Bessonov and S. Denisov in \cite{Bessonov2021entropy}. They also transferred $\K(n)$ to the theory of Krein systems; this object will be the key tool of the present paper.

\medskip

D. Damanik and B. Simon in the paper \cite{Damanik2006} proved a version of the Nevai-Totik theorem for orthogonal polynomials on the real line. They showed the equality between the degree of exponential decay of Jacobi parameters $a_n, b_n$, i.e., $\limsup\left(|a_n - 1| + b_n\right)^{1/n}$, and the inverse radius of analyticity of the corresponding Jost solution $u$, for details see \cite{Damanik2006} or Section 13.7 in \cite{SimonPart2}.
\medskip

\subsection{Main results}
Recall that we are studying one-dimensional Dirac operator \eqref{one dimensional Dirac operator of the half-line}. Throughout this paper we assume that the entries $p,q$ of $Q$ are real-valued and $p,q\in L^1_{\loc}(\R_+)$ (sometimes we will simply write $Q\in L^1_{\loc}(\R_+)$). The latter means that $p, q\in L^1([0,r])$ for every $ r > 0$. Let us introduce the key objects of the spectral theory of Dirac operators; for a reference we use the book \cite{LevitanSargsjan}  by B. Levitan and I. Sargsjan. Consider the corresponding boundary value problem:
\begin{gather}\label{Dirac system}
    JN'(t,\lambda) + Q(t)N(t,\lambda) = \lambda N(t,\lambda),\quad N(0,\lambda) =  \left(\begin{smallmatrix}1&0\\0&1\end{smallmatrix}\right),\quad t\ge 0.
\end{gather}
Let $\theta_{\pm}$ and $\phi_\pm$ be the entries of $N$, $N(t,\lambda) = \left(\begin{smallmatrix} \theta_+(t,\lambda)&\phi_+(t,\lambda)\\\theta_-(t,\lambda)&\phi_-(t,\lambda)\end{smallmatrix}\right)$.
For any potential $Q$ there exists  a unique Borel measure $\sigma_Q$ on the real line  such that $(1 + x^2)^{-1}\in L^1(\R,d\sigma_Q) $
and the mapping
\begin{gather}\label{Dirac spectral measure isometry}
    (f_1, f_2) \mapsto \frac{1}{\sqrt{\pi}}\int_{\R_+} f_1(x)\theta_+(x,\lambda) + f_2(x)\theta_-(x,\lambda)\, dx,\quad f_1,f_2\in L^2(\R_+)
\end{gather}
is an isometric operator between $L^2(\R_+)\oplus L^2(\R_+) $ and $L^2(\R, \sigma_Q)$.
The measure $\sigma_Q$ is called the spectral measure of $D_Q$. The Weyl function of $D_Q$ is an analytic function in the upper half-plane $\Cm_+=\{z\colon \Im z > 0\}$ defined by the relation
\begin{gather}\label{Dirac Weyl function definition}
    m(z) = -\lim_{t\to\infty} \frac{\phi_+(t,z)}{\theta_+(t,z)}, \quad z\in \Cm_+.
\end{gather}
The spectral measure and the Weyl function are connected by the relation
\begin{gather}\label{spectral measure and weyl function}
    m(z) - m(z_0) =\frac{1}{\pi} \int_{\R}\left(\frac{1}{\lambda - z} - \frac{1}{\lambda - z_0}\right)\,d\sigma_Q(\lambda),\quad z,z_0\in\Cm_+.
\end{gather}
\medskip

A Borel measure $\sigma = w\,dx + \sigma_s$ on the real line $\R$ belongs to the \Szego class if
\begin{gather}\label{Szego class definition}
    \int_{\R}\frac{d\sigma(x)}{1 + x^2} <\infty,\qquad \int_{\R}\frac{\log w(x)}{1 + x^2}dx > -\infty.
\end{gather}
Because of the inequality $\log w < w$, the second integral can't diverge to $+\infty$ and therefore for any measure in the \Szego class we have $\frac{\log w}{1 + x^2}\in L^1(\R)$. Furthermore, in this case there exists an outer function $\Pi$ in $\Cm_+$ such that $\Pi(i) > 0$ and  $|\Pi(x)|^{-2} =w(x)$
for Lebesgue almost every point $x$ on the real line (see Section 4 in \cite{Garnett}). 
We will call $\Pi$ the inverse \Szego function of $\sigma$.
\medskip

Denote the solution of \eqref{Dirac system} for $\lambda = 0$ by $N_Q(t) = N_Q(t,0)$. Let also 
\begin{gather}
    \label{hamiltonian for dirac}
    H_Q(t) = N_Q^*(t)N_Q(t),\quad E_Q(r) = \det\int_{r}^{r + 2}  H_Q(t)\,dt - 4.
\end{gather}
As we will see in Section \ref{section Entropy of Canonical systems}, the matrix $H_Q$ is the  Hamiltonian of the canonical system corresponding to $D_Q$ and $E_Q$ is the leading term of the entropy function $\bK_H$ of this Hamiltonian. If $Q\in L^2(\R_+)$ then $E_Q$ can be bounded via the Sobolev norm of the function $p + iq$, for details see Theorem \ref{theorem Sobolev norm} below.
For $Q\in L^1_{\loc}(\R_+)$ we prove that the exponential decay of $E_Q$ implies the existence of analytic continuation of $m$ into a strip in the lower half-plane; we denote the half-plane $\{z\colon \Im z > -\Delta\}$ by $\Omega_{\Delta}$.  
\begin{thm}\label{Entropy for extension}
Let $p,q\in L^1_{\loc}(\R_+)$ be real-valued functions and $Q = \left(\begin{smallmatrix}-q&p\\p&q\end{smallmatrix}\right)$. Assume that there exists $\delta > 0$ such that $E_Q(r) = O\left(e^{-\delta r}\right)$ as $r\to\infty$. Then 
\begin{enumerate}
    \item \label{entropy for extension part 1} the spectral measure of $D_Q$ is absolutely continuous and  belongs to the \Szego class \eqref{Szego class definition};
    \item\label{entropy for extension part 2}   the inverse \Szego function of $D_Q$ continues analytically into $\Omega_{\delta/4}$;
    \item\label{entropy for extension part 3}  the Weyl function of $D_Q$ continues meromorphically into $\Omega_{\delta/4}$.
\end{enumerate}
In particular, if $E_Q(r) = O\left(e^{-\delta r}\right)$ for every $\delta > 0$, then the \Szego function and the Weyl function of $D_Q$ extend into the whole complex plane $\Cm$.
\end{thm}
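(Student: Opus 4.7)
The plan is to combine the entropy--\Szego theory of Bessonov--Denisov (which yields part~(1) and the existence of $\Pi$) with a continuous Christoffel-minimizing-function argument modelled on the OPUC proof sketched in the introduction (which produces the analytic continuation of $\Pi$), and finally to transfer the continuation from $\Pi$ to the Weyl function $m$ via their standard factorization relation.

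For part~(1), I would use that $E_Q$ is the leading term of the entropy function $\bK_H$ of the canonical-system Hamiltonian $H_Q$ (Section~\ref{section Entropy of Canonical systems}); in particular, $E_Q(r) = O(e^{-\delta r})$ forces $\bK_H$ to be finite and even integrable on $\R_+$. By the entropy \Szego theorem of Bessonov--Denisov this is equivalent to $\sigma_Q$ belonging to the \Szego class~\eqref{Szego class definition}, and the absence of a singular part on the relevant portion of $\R$ comes from the same circle of results, so that the inverse \Szego function $\Pi$ of $\sigma_Q$ is well defined.

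For part~(2), the natural analog of the Christoffel function $\lambda_n(0)$ from~\eqref{Christoffel minimizing function OPUC} is the continuous minimum
\[
    \Lambda_r(z) = \inf\left\{\|f\|^2_{L^2(\sigma_Q)} : f \text{ entire of exponential type } \le r,\ f(z) = 1\right\},
\]
whose infinite limit is $\Lambda_\infty(z) = |\Pi(z)|^{-2}$ for $z \in \Cm_+$. The canonical-system representation of $\Lambda_r$ converts $E_Q(r) = O(e^{-\delta r})$ into $\Lambda_r(0) - \Lambda_\infty(0) = O(e^{-\delta r})$, and hence $\|f_r - f_\infty\|_{L^2(\sigma_Q)} = O(e^{-\delta r/2})$ for the corresponding extremizers, where $f_\infty$ has boundary values $1/\Pi$ on $\R$. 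Since each $f_r$ is entire of exponential type $\le r$, a Paley--Wiener / Phragm\'{e}n--Lindel\"{o}f bootstrap upgrades this $L^2$ rate on $\R$ to uniform convergence of $f_r$ on compact subsets of $\Omega_{\delta/4}$, and the limit supplies the analytic extension of $1/\Pi$ into that strip. This is the step where I expect most of the work to lie: the OPUC Taylor-polynomial argument in the introduction gets away with elementary Cauchy estimates inside the disc, whereas its Dirac analog requires careful Paley--Wiener / Plancherel--Polya estimates that lose constants at each application, which accounts for the factor $\delta/4$ in the statement rather than the naive $\delta$.

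For part~(3), one has a standard factorization of the Weyl function in terms of $\Pi$, roughly of the form $m(z) = i\,\Pi^\sharp(z)/\Pi(z) + h(z)$ on $\Cm_+$, where $\Pi^\sharp(z) = \overline{\Pi(\bar z)}$ is the reflection of $\Pi$ across $\R$ and $h$ is a Herglotz function determined explicitly by the singular part of $\sigma_Q$ (trivial here by part~(1)). Once $\Pi$ has been analytically continued into $\Omega_{\delta/4}$, the reflection $\Pi^\sharp$ extends into the mirror region $\{\Im z < \delta/4\}$, and their ratio is meromorphic throughout $\Omega_{\delta/4}$ with poles located exactly at the zeros of $\Pi$ in that strip. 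The final claim about the whole complex plane then follows by letting $\delta \to \infty$.
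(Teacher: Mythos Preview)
Your part~(1) is essentially the paper's argument: $E_Q(r)=O(e^{-\delta r})$ gives $\bK_{H_Q}(r)=O(e^{-\delta r})$, hence finiteness, and the Bessonov--Denisov theorem puts $\sigma_Q$ in the \Szego class. The absolute continuity requires one more step (the paper proves it as Lemma~\ref{entropy and absolute continuity}, using that $\sqrt{|\K'|}\in L^1$), but you have flagged that it comes from the same circle of results.

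Part~(2) diverges from the paper both in method and in correctness of details. The paper does \emph{not} run a Christoffel-minimizing argument here; instead it works with the regularized Krein system $\Tilde P_r^*,\Tilde P_r$ of Section~\ref{Section regularized Krein system}, bounds the coefficient $f_1$ by $\sqrt{|\K'|}+|\K'|$ (Lemma~\ref{coefficients bound}), and shows directly from the ODE~\eqref{diff equation for tilde P_*} together with the reflection formula~\eqref{inversion formula} that $\frac{\d}{\d r}\Tilde P_r^*(z)$ is dominated by $(\sqrt{|\K'(r/2)|}+|\K'(r/2)|)e^{r|\Im z|}$. The integral of this over $r$ converges precisely when $|\Im z|<\delta/4$, which is where the factor $1/4$ actually comes from: $\sqrt{|\K'(r/2)|}\sim e^{-\delta r/4}$. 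Your proposed route has gaps: the passage from $E_Q=O(e^{-\delta r})$ to $\Lambda_r(z_0)-\Lambda_\infty(z_0)=O(e^{-\delta r})$ is not free (in the paper it is a consequence of the regularized-system machinery, see Corollary~\ref{corollary which was step 2}); the extremizer $f_\infty$ does not have boundary values $1/\Pi$ but rather $\mathrm{const}\cdot\Pi(z)/(z-\ol{z_0})$; and in any case your scheme would produce a continuation of $1/\Pi$, which cannot be analytic in the strip since $\Pi$ typically has zeros there (the resonances). The paper extends $\Pi$ itself, via $\Tilde P_r^*\to\Pi$.

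Part~(3) contains a genuine error. The relation $m=i\,\Pi^\sharp/\Pi+h$ with $\Pi^\sharp(z)=\ol{\Pi(\ol z)}$ is not a valid factorization of the Weyl function: on $\R$ the ratio $\Pi^\sharp/\Pi$ has modulus~$1$, while $\Im m(x)=w(x)=|\Pi(x)|^{-2}$. The correct identity, and the one the paper uses, is~\eqref{Weyl function via the Szego function}: $m=e^{i\gamma}\hat\Pi/\Pi$, where $\hat\Pi$ is the inverse \Szego function of the \emph{dual} operator $D_{-Q}$, not a reflection of $\Pi$. The proof of part~(3) then consists of the observation $E_{-Q}=E_Q$ (since $H_{-Q}=J^*H_QJ$), so that part~(2) applies equally to $-Q$ and both $\Pi$ and $\hat\Pi$ extend analytically into $\Omega_{\delta/4}$.
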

We prove Theorem \ref{Entropy for extension} and give details on the continuations of $\Pi$ and $m$ in Section \ref{section proof of Theorem 1}. The constant $1/4$ in the theorem is sharp for some potentials, see Section \ref{section: on the optimality of the constants}.
\medskip

The simplest class of potentials with exponentially decaying entropy is a class of compactly supported potentials. Furthermore, the entropy function of exponentially decreasing or super-exponentially decreasing potentials, i.e., potentials that satisfy
\begin{gather*}
    |p(r)| + |q(r)| = O\left(e^{-ar}\right),\quad r\to\infty,
\end{gather*}
for some $a > 0$ or for all $a > 0$ is also exponentially decaying, see Theorem \ref{theorem on sufficient conditions} below. Theorem \ref{Entropy for extension} is widely known in these situations, see \cite{Klein1986}, \cite{Sjostrand1990}, \cite{Simon2000}, \cite{Froese1997}. When $m_Q$ extends meromorphicaly below the real line one can speak about the scattering resonances. Namely, $z$ is a resonance of $D_Q$ of multiplicity $n$ if $m$ has a pole of order $n$ in $z$,  see the book \cite{Dyatlov2019} by S. Dyatlov and M. Zworski for the general background on resonances and the papers \cite{Iantchenko2014}, \cite{Iantchenko20141}, \cite{Korotyaev2021} by E. Korotyaev for the case of Dirac operators. If $E_Q$ decays super-exponentially then, by Theorem \ref{Entropy for extension}, $m_Q$ is meromorphic in the whole complex plane and the resonances are also well-defined. Let us introduce the class
\begin{gather}\label{definition of class E}
    \mathcal{E} = \bigcup_{\alpha > 1}\mathcal{E}_{\alpha} ,\;\;\;\mathcal{E}_{\alpha} = \left\{Q\in L^1_{\loc}(\R_+)\colon E_Q = O\left( \exp(-{r^{\alpha}})\right),\;r\to\infty\right\}.
\end{gather}
 In Section \ref{section: resonances} we show that the resonances are exactly the zeroes of the corresponding \Szego function $\Pi$ if $E_Q$ decreases super-exponentially and prove the following result. 
\begin{thm}\label{theorem on resonances}
Resonances of the Dirac operator uniquely determine its potential in the class $\mathcal{E}$.   
\end{thm}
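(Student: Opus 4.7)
\; The argument reduces the statement to a Hadamard-type uniqueness theorem for the entire function $\Pi_Q$. By the results of Section \ref{section: resonances}, for $Q \in \mathcal{E}$ the resonances of $D_Q$ coincide with the zeros (counted with multiplicity) of the inverse \Szego function $\Pi_Q$, which is entire by Theorem \ref{Entropy for extension}. Once $\Pi_Q$ is reconstructed, we recover the density $w_Q = |\Pi_Q|^{-2}$ of the (purely absolutely continuous, by Theorem \ref{Entropy for extension}) spectral measure $\sigma_Q$, and then the potential $Q$ itself by the classical inverse spectral theorem for Dirac operators on the half-line \cite{LevitanSargsjan}. Thus it suffices to show that the zero set (with multiplicities) of $\Pi_Q$ determines $\Pi_Q$.

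The first step is to prove that $\Pi_Q$ has finite order for $Q \in \mathcal{E}_\alpha$, $\alpha > 1$. Tracing through the proof of Theorem \ref{Entropy for extension} with the super-exponential bound $E_Q(r) = O(\exp(-r^\alpha))$, I expect an estimate of the form
\begin{gather*}
|\Pi_Q(z)| \le \exp\left( C|z|^{\alpha/(\alpha-1)}\right), \qquad z \in \Cm,
\end{gather*}
showing that $\Pi_Q$ is entire of finite order $\rho \le \alpha/(\alpha-1)$. Hadamard's factorization theorem then yields
\begin{gather*}
\Pi_Q(z) = e^{P_Q(z)} \prod_k E_{\lfloor \rho \rfloor}(z/z_k),
\end{gather*}
where the zeros $z_k$ lie in $\overline{\Cm_-}$ (since $\Pi_Q$ is outer in $\Cm_+$ and hence has no zeros there), $E_p$ are the Weierstrass primary factors, and $P_Q$ is a polynomial of degree at most $\lfloor \rho \rfloor$.

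If $Q_1, Q_2 \in \mathcal{E}$ share the same resonances, the canonical products coincide and $\Pi_{Q_1}/\Pi_{Q_2} = e^R$ for the polynomial $R = P_{Q_1} - P_{Q_2}$. To pin down $R$, I would combine two pieces of information. On one hand, $\Pi_{Q_j}$ is outer in $\Cm_+$, so
\begin{gather*}
\log|\Pi_{Q_j}(x+iy)| = \frac{y}{\pi}\int_{\R}\frac{-\tfrac12 \log w_{Q_j}(t)}{(t-x)^2 + y^2}\,dt, \qquad y > 0,
\end{gather*}
and the \Szego condition gives at most linear growth of $\log|\Pi_{Q_j}|$ along rays into $\Cm_+$. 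On the other hand, the finite-order bound of the previous step controls the growth in $\Cm_-$. A Phragmen--Lindelof comparison of these two estimates, together with the Weierstrass growth of the canonical product on rays into $\overline{\Cm_-}$, forces $\deg R \le 1$; the remaining two coefficients are then pinned down by the normalization $\Pi_Q(i) > 0$ and the standard asymptotic of the Weyl function $m_Q$ at $i\infty$, which prescribes $\Pi_Q(iy)$ to leading order.

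The main obstacle, I expect, will be the finite-order growth bound in the second step: converting the super-exponential decay of $E_Q$ into a two-sided estimate of order close to $\alpha/(\alpha-1)$ in the whole complex plane. Once this bound is available, Hadamard's factorization together with the outer-function structure of $\Pi_Q$ in $\Cm_+$ are classical tools, and the final recovery of $Q$ from $\sigma_Q$ is standard inverse spectral theory.
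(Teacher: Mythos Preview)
Your plan follows the paper's route almost exactly: identify resonances with zeros of the entire function $\Pi_Q$, bound its order by $\alpha/(\alpha-1)$ when $Q \in \mathcal{E}_\alpha$ (this is Lemma~\ref{order of the szego function for resonances} in the paper, proved just as you outline), apply Hadamard factorisation, and conclude that $\Pi_{Q_1}/\Pi_{Q_2} = e^R$ for a polynomial $R$. The paper reduces to $\deg R \le 1$ more directly than you suggest: Lemma~\ref{entropy and absolute continuity} already gives the \emph{two-sided} bound $e^{-c|z|} \lesssim |\Pi_j(z)| \lesssim e^{c|z|}$ throughout $\overline{\Cm_+}$, so $\Re R(z) \lesssim |z|+1$ in a half-plane, which for a polynomial forces $\deg R \le 1$ immediately. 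No Phragm\'en--Lindel\"of and no information from $\Cm_-$ or from the canonical product are needed here.

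The genuine gap is your endgame for killing the remaining coefficients. Your claim that the asymptotic of $m_Q$ at $i\infty$ ``prescribes $\Pi_Q(iy)$ to leading order'' is not correct: by \eqref{Weyl function via the Szego function} one has $m_Q = e^{i\gamma}\hat\Pi_Q/\Pi_Q$, which involves the \emph{dual} Szeg\H{o} function, and there is no universal leading-order behaviour of $\Pi_Q(iy)$ alone valid across all of $\mathcal{E}$ (which contains potentials not in $L^2$). Together with $\Pi_Q(i)>0$ you have pinned down at most one real parameter out of four. The paper closes this differently: writing $R(z)=a_1z+a_2$, the Szeg\H{o} condition $\frac{\log w_j}{1+x^2}\in L^1(\R)$ forces $\Re a_1=0$, whence $|\Pi_1|=e^{\Re a_2}|\Pi_2|$ on $\R$ and $d\sigma_1=e^{-2\Re a_2}\,d\sigma_2$; relation \eqref{spectral measure and weyl function} then gives $m_1=am_2+b$, and the normalisation $H_{Q_j}(0)=N_{Q_j}(0)^*N_{Q_j}(0)=I$ (built into the Dirac boundary condition), via Lemma~2.4 of \cite{Bessonov2021}, forces $a=1$, $b=0$. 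So the missing ingredient in your plan is precisely this use of the initial-value normalisation of the associated canonical system; an asymptotic of $\Pi_Q$ alone will not supply it.
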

Description of the sets which can be resonances sets is an open problem. We plan to continue working in this direction.
\subsection{Square summable potentials}
Denote by $\Delta_{E}$ the rate of exponential decay of the entropy function, i.e., let
\begin{gather*}
    \Delta_{E} = \sup\left\{\delta\colon E_Q(r) = O\left(e^{-\delta r}\right),\quad r\to\infty\right\}.
\end{gather*}
Furthermore, recall that in our notation $\Omega_{\delta}=\{z\colon\Im z > -\delta\} $ and define 
\begin{gather*}
    \Delta_{\Pi} = \sup\left\{\delta\colon \Pi \mbox{ extends analyticaly into } \Omega_{\delta}\mbox{ such that }\frac{\Pi(x -i\delta )}{x + i}\in H^2(\Cm_+)\right\}
\end{gather*}
if $\sigma$ is an absolutely continuous measure from the \Szego class \eqref{Szego class definition} and let $\Delta_{\Pi} = 0$ otherwise. The numbers $\Delta_E$ and $\Delta_{\Pi}$ play the roles of $r_{\alpha}$ and $r_{\Pi}$ from Nevai-Totik theorem \ref{theorem Nevai Totik}. Recall that $r_{\alpha} = r_{\Pi}$; in the following theorem we state that $\Delta_E$ and $\Delta_{\Pi}$ satisfy the two-sided inequality. In this sense Theorem \ref{theorem on equivalence} can be regarded as a version of Theorem \ref{theorem Nevai Totik} for Dirac operators. 
\begin{thm}\label{theorem on equivalence}
Let $p, q\in L^2(\R_+)$ be real-valued functions.
Then $ 4 \Delta_{\Pi}\le \Delta_{E}\le 8\Delta_{\Pi}$.
\end{thm}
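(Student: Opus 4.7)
The plan is to prove the two inequalities separately, in each case imitating the OPUC strategy sketched in Section \ref{Section orthogonal polynomials}.

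For the upper bound $\Delta_E\le 8\Delta_\Pi$, my approach is to refine Theorem \ref{Entropy for extension} by exploiting the extra hypothesis $p,q\in L^2(\R_+)$. Theorem \ref{Entropy for extension} provides only a pointwise analytic extension of $\Pi$ to $\Omega_{\delta/4}$, whereas the definition of $\Delta_\Pi$ requires the integrated bound $(x+i)^{-1}\Pi(x-i\delta)\in H^2(\Cm_+)$. I would revisit the proof of Theorem \ref{Entropy for extension} and, combining it with the $L^2$-Sobolev estimate on $E_Q$ supplied by Theorem \ref{theorem Sobolev norm}, promote the pointwise extension to an integrated $H^2$ bound on a line interior to $\Omega_{\delta/4}$. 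A Cauchy/Poisson representation on a shifted line loses a factor of two in this upgrade, so that $H^2$ control is available only in $\Omega_{\delta/8}$, yielding $\Delta_\Pi\ge\Delta_E/8$.

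The lower bound $4\Delta_\Pi\le\Delta_E$ is the main new content, and I would prove it by transplanting the OPUC argument of Section \ref{Section orthogonal polynomials} to the Dirac setting. Fix $\delta<\Delta_\Pi$, so that $\Pi$ continues into $\Omega_\delta$ with $(x+i)^{-1}\Pi(x-i\delta)\in H^2(\Cm_+)$. The continuous analog of the Christoffel minimizing functions \eqref{Christoffel minimizing function OPUC} is
\begin{gather*}
    \lambda_r(z) = \min\bigl\{\|F\|_{L^2(\sigma_Q)}^2\colon F\ \text{entire of exponential type}\le r,\ F(z)=1\bigr\},
\end{gather*}
which in the Krein-system framework satisfies $\lambda_\infty(0)=|\Pi(0)|^{-2}$ and admits an explicit formula relating $\log(\lambda_r(0)/\lambda_\infty(0))$ to the entropy $\bK_H(r)$. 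A Paley--Wiener approximation, applied to $\Pi$ viewed on $\Im z=-\delta$, produces an entire function $h_r$ of exponential type $r$ with $\|(\Pi-h_r)/\Pi\|_{L^2(\R)}=O(e^{-\delta r})$. Substituting $h_r/h_r(0)$ into the Christoffel minimization and expanding the square exactly as in the OPUC computation, while exploiting the fact that $(h_r-\Pi)/\Pi\in H^2(\Cm_+)$ so that its linear cross term with $1$ integrates to zero on $\R$, produces $\lambda_r(0)-\lambda_\infty(0)=O(e^{-2\delta r})$, and hence $\bK_H(r)=O(e^{-2\delta r})$.

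Translating this into a decay rate on the leading term $E_Q$ is where the second factor of two appears: the quantitative relation between $\bK_H$ and $E_Q$ in the Dirac setting effectively squares local fluctuations, upgrading $O(e^{-2\delta r})$ on $\bK_H$ to $O(e^{-4\delta r})$ on $E_Q$. This gives $\Delta_E\ge 4\delta$, and letting $\delta\nearrow\Delta_\Pi$ yields $\Delta_E\ge 4\Delta_\Pi$. The main obstacle, in my view, is rigorously setting up the continuous Christoffel framework for Krein systems -- in particular establishing both $\lambda_\infty(0)=|\Pi(0)|^{-2}$ and the precise quantitative link between $\lambda_r(0)$ and $\bK_H(r)$. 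Once this dictionary is in place, the vanishing of the linear cross term via the outer character of $\Pi$, and the Paley--Wiener approximation of $\Pi$ by entire functions of exponential type, are direct transcriptions of the OPUC argument.
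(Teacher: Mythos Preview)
Your overall strategy for the lower bound---Christoffel minimization combined with Paley--Wiener approximation of $\Pi$ and vanishing of the linear cross term via the outer property---is essentially what the paper does in Lemma~\ref{lemma from extension of the decrease of P}. However, several technical points are wrong or missing.

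First, you evaluate the Christoffel function at $z=0$, which lies on the real line. There $\bm_\infty(0)=\bigl(\int_0^\infty|P(s,0)|^2\,ds\bigr)^{-1}$ need not be positive, and the identity $\bm_\infty(\lambda)=2\Im\lambda/|\Pi(\lambda)|^2$ holds only for $\lambda\in\Cm_+$. The paper evaluates at $z_0=i$, where the differential equation \eqref{diff equation for modulus of P_*} for $|\tilde P_r^*(i)|^2$ collapses to a single term involving $\K'(r/2)$, and this is what links the Christoffel gap back to the entropy.

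Second, your explanation of the extra factor of two is incorrect. You claim that passing from $\bK_H$ to $E_Q$ ``squares local fluctuations''; in fact $E_Q(r)$ is the leading term of the series defining $\bK_H(r)$ and decays at exactly the same exponential rate. The genuine source of the doubling is the half-argument in the coefficient $f_2(r)=\K'(r/2)/4$ of the regularized Krein system: the Christoffel estimate yields $\K(r/2)\lesssim e^{-2\delta r}$, whence $\K(r)\lesssim e^{-4\delta r}$.

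Third, because the paper works at the fixed point $z_0=i$, Lemma~\ref{lemma from extension of the decrease of P} carries the restriction $\delta<\min(\Delta,1)$, so Corollary~\ref{corollary from extension to the decay of entropy} only gives $\Delta_E\ge\min(4,4\Delta_\Pi)$. Removing the cap at $4$ requires the rescaling argument of Section~\ref{section rescaling} (Theorem~\ref{theorem decay of different entropies}), which you do not mention and which is not trivial.

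For the upper bound, the paper's mechanism is quite different from a ``Cauchy/Poisson representation on a shifted line''. Lemma~\ref{lemma bound on Pi with alpha and beta} proves the polynomial growth bound $|\Pi(z)|\lesssim(1+|z|)^\alpha$ in $\overline{\Omega_{\alpha\delta/4}}$ for every $\alpha\in(0,1)$, via a delicate argument combining the Christoffel--Darboux representation of $P_*$, the $L^2$ theory of Theorem~\ref{theorem spectral theory of Dirac operator with L^2 potential}, and Phragm\'en--Lindel\"of. The $H^2$ condition then needs $\alpha<1/2$, which forces the strip width down to $\delta/8$; that is where the factor of two is lost.
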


\medskip

If $Q\in L^2(\R_+)$, then $\Delta_E$ can be computed. Let $\mathcal{F}$ be the isometric Fourier transform on the real line and define Sobolev space of tempered distributions as
\begin{gather*}
    W^{-1}_2(\R) = \left\{f\colon \norm[f]^2_{W^{-1}_2} = \int_{\R}\frac{(\mathcal{F}f)^2(\eta)}{1 + \eta^2}d\eta 
    < \infty \right\}.
\end{gather*}
\medskip

Theorem 4.1 in \cite{Bessonov20221} states that the Sobolev norm of the function $p + iq$ is comparable to $\bK_H(r) = \sum_{n\ge 0} E_Q(r + n)$. More precisely, denote by $\mathbf{1}_{[r,\infty)} $ the indicator function of the set $[r,\infty)$ and let $f_r = (p + iq)\mathbf{1}_{[r,\infty)}$. Then the inequality $C^{-1}\norm[f_r]_{W^{-1}_2}^2\le \bK_H(r)\le C \norm[f_r]_{W^{-1}_2}^2$
holds with some constant $C$ depending only on $\norm[p + iq]_{L^2(\R_+)}$. It follows that 
\begin{align*}
    \Delta_{E} = 
    \sup\left\{\delta\colon \norm[f_r]_{W^{-1}_2}^2 = O\left(e^{-\delta r}\right),\quad r\to\infty\right\}= 2\liminf_{r\to\infty}\left(-\frac{1}{r}\ln\norm[f_r]_{W^{-1}_2}\right) .
\end{align*}
Therefore Theorem \ref{theorem on equivalence} can be rewritten in the following form.
\begin{thm}\label{theorem Sobolev norm}
Let $p, q\in L^2(\R_+)$ be real-valued functions and let $f_r = (p + iq)\mathbf{1}_{[r,\infty)}$.
Then
\begin{gather*}
    2 \Delta_{\Pi}\le \liminf_{r\to\infty}\left(-\frac{1}{r}\ln\norm[f_r]_{W^{-1}_2}\right) \le 4\Delta_{\Pi}.
\end{gather*}
\end{thm}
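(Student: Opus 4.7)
The plan is to derive Theorem \ref{theorem Sobolev norm} as a direct corollary of Theorem \ref{theorem on equivalence} together with the Bessonov--Denisov Sobolev-norm estimate cited just above the statement. In short, the only task is to translate the rate $\Delta_E$ of exponential decay of $E_Q$ into the corresponding $\liminf$ for $\norm[f_r]_{W^{-1}_2}$, and then divide the two-sided bound $4\Delta_\Pi\le\Delta_E\le 8\Delta_\Pi$ by $2$.

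First I would verify that $\Delta_E$ coincides with the exponential-decay rate of the entropy tail $\bK_H(r)=\sum_{n\ge 0}E_Q(r+n)$. The direction $\bK_H\ge E_Q$ is trivial, so one inequality is immediate. For the reverse, if $E_Q(r)=O(e^{-\delta r})$ then summing a geometric series gives $\bK_H(r)=O(e^{-\delta r})$ with constant depending only on $\delta$; hence the two suprema agree. Next, the cited Theorem 4.1 of \cite{Bessonov20221} yields constants $C=C(\norm[p+iq]_{L^2(\R_+)})$ with
\begin{gather*}
    C^{-1}\norm[f_r]^2_{W^{-1}_2}\le \bK_H(r)\le C\norm[f_r]^2_{W^{-1}_2},
\end{gather*}
so the exponential-decay rate of $\bK_H(r)$ equals the exponential-decay rate of $\norm[f_r]^2_{W^{-1}_2}$. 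Taking logarithms and dividing by $r$, this rate equals $2\liminf_{r\to\infty}\bigl(-\tfrac{1}{r}\ln\norm[f_r]_{W^{-1}_2}\bigr)$, exactly as displayed in the paragraph preceding the theorem.

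Combining the two observations gives the identity
\begin{gather*}
    \Delta_E \;=\; 2\liminf_{r\to\infty}\left(-\frac{1}{r}\ln\norm[f_r]_{W^{-1}_2}\right).
\end{gather*}
Now I would invoke Theorem \ref{theorem on equivalence}, which provides $4\Delta_\Pi\le\Delta_E\le 8\Delta_\Pi$ under the hypothesis $p,q\in L^2(\R_+)$. Substituting the above identity and dividing by $2$ yields precisely $2\Delta_\Pi\le \liminf_{r\to\infty}\bigl(-\tfrac{1}{r}\ln\norm[f_r]_{W^{-1}_2}\bigr)\le 4\Delta_\Pi$.

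There is essentially no obstacle: all analytic content sits in Theorem \ref{theorem on equivalence} (whose proof occupies later sections) and in the Sobolev-norm bound of \cite{Bessonov20221}. The only mild point to watch is that the constant $C$ in the Sobolev estimate depends on $\norm[p+iq]_{L^2(\R_+)}$ but is uniform in $r$, so it does not affect the exponential rate; and that the Bessonov--Denisov bound requires precisely the hypothesis $p,q\in L^2(\R_+)$ assumed here, which is why Theorem \ref{theorem Sobolev norm} is stated in the $L^2$ setting rather than the broader $L^1_{\loc}$ setting of Theorem \ref{Entropy for extension}.
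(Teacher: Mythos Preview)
Your proposal is correct and follows precisely the route taken in the paper: Theorem \ref{theorem Sobolev norm} is presented there as a direct reformulation of Theorem \ref{theorem on equivalence}, obtained by combining the Bessonov--Denisov Sobolev bound $C^{-1}\norm[f_r]^2_{W^{-1}_2}\le \bK_H(r)\le C\norm[f_r]^2_{W^{-1}_2}$ with the observation (made in Section \ref{section Entropy of Canonical systems}) that $E_Q$ and $\bK_H$ share the same exponential decay rate. Your added remark that the constant $C$ is uniform in $r$ is exactly the point that makes the passage to exponential rates legitimate.
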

\medskip

Proof of Theorem \ref{theorem on equivalence} can be found in Section \ref{proof of Theorem 2}. In Section \ref{section: on the optimality of the constants} we show that the first inequality in Theorem \ref{theorem on equivalence} is sharp for some potentials just as the constant $1/4$ in Theorem \ref{Entropy for extension}. 
\medskip

\subsection{Examples}\label{section examples}
For a given $Q\in L^1_{\loc(\R_+)} $(especially, for $Q\notin L^2(\R_+)$) it can be hard to answer whether or not $E_Q$ decreases fast enough. The following theorem provides some examples of potentials with a rapidly decaying entropy.
\begin{thm}\label{theorem on sufficient conditions} In the two following situations, the assertion $E_Q(r) = O\left(e^{-\delta r}\right)$  as $r\to\infty$ holds.
\begin{itemize}
    \item  $Q =\left(\begin{smallmatrix}0&p\\p&0\end{smallmatrix}\right)$, where $p$ is real-valued and
    \begin{gather*}
        \sup_{t\ge r}\left|\int_{r}^{t}p(s)\,ds\right| = O\left(e^{-\delta r/2}\right),\quad r\to\infty;
    \end{gather*}
    \item $Q=\left(\begin{smallmatrix}-q&p\\p&q\end{smallmatrix}\right)$, where $p$ and $q$ are real-valued, $\displaystyle{\sup_{r\ge 0}\int_r^{r + 1}\left(|p| + |q|\right)\, ds < \infty}$ and 
    \begin{gather*}
        \sup_{t\ge r}\left|\int_{r}^{t}p(s)\,ds\right| = O\left(e^{-\delta r}\right),\quad \sup_{t\ge r}\left|\int_{r}^{t}q(s)\,ds\right| = O\left(e^{-\delta r}\right),\quad r\to\infty.
    \end{gather*}
\end{itemize}
\end{thm}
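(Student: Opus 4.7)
The first step is to eliminate the potentially large matrix $N_Q(r)$ from $E_Q(r)$. At $\lambda=0$ the system \eqref{Dirac system} becomes $N_Q'=JQN_Q$; since $\trace(JQ)=-p+p=0$, one has $\det N_Q(t)\equiv 1$. Setting $M(t):=N_Q(r)^{-1}N_Q(t)$, this matrix satisfies $M'=JQM$ on $[r,r+2]$ with $M(r)=I$ and $\det M\equiv 1$, and because $|\det N_Q(r)|=1$,
\begin{gather*}
\det\int_r^{r+2}H_Q(t)\,dt=\det\int_r^{r+2}M(t)^*M(t)\,dt.
\end{gather*}
In the antidiagonal case of item~1, with $q\equiv 0$, the equation $M'=JQM$ decouples and $M(t)=\mathrm{diag}(e^{-\tilde P(t)},e^{\tilde P(t)})$, where $\tilde P(t)=\int_r^tp\,ds$. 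Setting $\varepsilon:=\sup_{t\ge r}|\tilde P(t)|=O(e^{-\delta r/2})$ and expanding $e^{\pm 2\tilde P}=1\pm 2\tilde P+2\tilde P^{\,2}+O(\varepsilon^3)$ on $[r,r+2]$, the product $\int_r^{r+2}e^{-2\tilde P}\cdot\int_r^{r+2}e^{2\tilde P}$ equals $4+8\int\tilde P^{\,2}-4(\int\tilde P)^2+O(\varepsilon^3)$. The $O(\varepsilon^2)$ bracket is nonnegative by Cauchy--Schwarz and is controlled by $8\int\tilde P^{\,2}=O(\varepsilon^2)=O(e^{-\delta r})$, which gives item~1.

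For item~2 let $\Phi(t):=\int_r^tJQ(s)\,ds$. By hypothesis $\varepsilon:=\sup_{t\ge r}|\Phi(t)|=O(e^{-\delta r})$, while $\|JQ\|_{L^1[r,r+2]}\le C$ is bounded uniformly in $r$. The crucial estimate is
\begin{gather*}
\sup_{t\in[r,r+2]}|M(t)-I|\ls\varepsilon.
\end{gather*}
A direct Picard iteration would give only $O(1)$: the cancellation in the primitive $\Phi$ must be extracted by integration by parts. With $u:=M-I$ the identity $u(t)=\Phi(t)+\int_r^tJQ\cdot u\,ds$ together with integration by parts in the second term (using $M'=JQM$) yields
\begin{gather*}
u(t)=\Phi(t)+\Phi(t)u(t)-\int_r^t\Phi(s)\,d\Phi(s)-\int_r^t\Phi(s)JQ(s)u(s)\,ds.
\end{gather*}
Each term on the right is bounded by $O(\varepsilon)$ or $O(\varepsilon\sup_{[r,r+2]}|u|)$ since $|\Phi|\le\varepsilon$ and $\|JQ\|_{L^1}\le C$. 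For $r$ large enough that $\varepsilon$ is small, absorption yields $\sup_{[r,r+2]}|u|\ls\varepsilon$.

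The remaining step exploits the $SL_2$ structure. Since $\det M\equiv 1$, expanding $\det(I+u)=1+\trace u+\det u$ forces $\trace u=-\det u=O(\varepsilon^2)$. From $H_M-I=u+u^*+u^*u$ one then has $\trace(H_M(t)-I)=2\trace u(t)+|u(t)|_F^2=O(\varepsilon^2)$ uniformly on $[r,r+2]$, while $|H_M-I|=O(\varepsilon)$. Averaging, $A:=\tfrac12\int_r^{r+2}H_M\,dt$ satisfies $|A-I|=O(\varepsilon)$ and $\trace(A-I)=O(\varepsilon^2)$, so $\det A-1=\trace(A-I)+\det(A-I)=O(\varepsilon^2)$. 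Multiplying by $4$ gives $E_Q(r)=4(\det A-1)=O(\varepsilon^2)=O(e^{-2\delta r})$, which is even stronger than claimed.

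The only nonroutine step is the bound $|M-I|=O(\varepsilon)$ in item~2: the hypothesis controls only the primitives of $p,q$, not their $L^1$-norms, so a naive Gronwall argument is insufficient and the integration-by-parts step is essential. The rest is algebra driven by $\det M\equiv 1$, together with the routine Taylor computation used in the diagonal case.
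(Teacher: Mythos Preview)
Your proof is correct. For the off-diagonal case (item~1) you follow essentially the same route as the paper: compute $M$ explicitly as a diagonal matrix, write $E_Q(r)$ as a product of two integrals of $e^{\pm 2\tilde P}$, and Taylor-expand.

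For item~2 the two proofs diverge in how they reach the key estimate $\sup_{t\in[r,r+2]}|M(t)-I|\lesssim\varepsilon$. The paper works directly with the Dyson series for the ordered exponential: in the $m$-th iterated integral it bounds the innermost integral $\int_r^{t_{m-1}}JQ\,ds$ by $\varepsilon=\sup_{s\ge r}\bigl\|\int_r^sQ\bigr\|$ and the remaining $m-1$ factors by $\|Q\|_{L^1[r,r+2]}$, summing to $\varepsilon\,e^{\|Q\|_{L^1[r,r+2]}}\lesssim\varepsilon$. Your integration-by-parts identity
\[
u(t)=\Phi(t)+\Phi(t)u(t)-\int_r^t\Phi\,d\Phi-\int_r^t\Phi\,JQ\,u\,ds
\]
accomplishes the same cancellation by a more elementary device and avoids writing out the series. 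Both arguments genuinely need the uniform $L^1$ bound on $|p|+|q|$, which is why it appears among the hypotheses.

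In the final step you go further than the paper: you exploit $\det M\equiv 1$ to force $\trace u=-\det u=O(\varepsilon^2)$, hence $\trace(H_M-I)=O(\varepsilon^2)$ and $E_Q(r)=O(\varepsilon^2)=O(e^{-2\delta r})$. The paper simply records $H_M-I=O(\varepsilon)$ and concludes $E_Q(r)=O(e^{-\delta r})$, which already matches the theorem's statement. Your sharper conclusion is correct and is implicitly available in the paper's setup as well; you just extract it.
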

The fact that the oscillation may compensate the growth of the potential and lead to the properties typical to the properties of decreasing potentials is not new, see \cite{Matveev1972}, \cite{Skriganov1973}, \cite{Sasaki2007} and  Appendix $2$ to XI.$8$ in \cite{ReedSimon3}.
\medskip
If we consider the function $p(x) = e^x\sin\left(e^{2x}\right)$ then
\begin{gather*}
    \sup_{t\ge r}\left|\int_r^t p(x)\, dx \right|= \sup_{t\ge r}\left|\int_r^t e^x\sin\left(e^{2x}\right)\, dx \right| = \sup_{t\ge r}\left|\int_{e^{2r}}^{e^{2t}} \frac{\sin\left(s\right)}{2\sqrt{s}}\, ds \right| =O\left(e^{-r}\right),\quad r\to\infty.
\end{gather*}
Hence, by Theorems \ref{Entropy for extension} and \ref{theorem on sufficient conditions} the Weyl function of $D_Q$ with $Q =\left(\begin{smallmatrix}0&p\\p&0\end{smallmatrix}\right)$ is meromorphic in the half-plane $\{z\colon \Im z > -1/2\}$. A similar argument for $p(x) = xe^{x^2}\sin\left(e^{2x^2}\right)$ gives an example of a ``large'' potential corresponding to a meromorphic Weyl function in the whole complex plane.

\subsection{Structure of the paper}
In Section \ref{section: preliminary information} we introduce the main tools of the present paper -- Krein systems, canonical Hamiltonian systems and the regularized Krein systems, see Sections \ref{section Krein systems}, \ref{Section canonical systems} and \ref{Section regularized Krein system} respectively. Proof of Theorem \ref{Entropy for extension} can be found in Section \ref{short proof of the first theorem}; in Section \ref{section Different construction of the analytical extension} we give more details on the extensions from Theorem \ref{Entropy for extension}.  We prove Theorem \ref{theorem on equivalence} in Section \ref{proof of Theorem 2}, the large part of the proof is devoted to a rescaling argument, see Section \ref{section rescaling}. At the end of the paper, in Section \ref{sufficient conditions}, we prove Theorems \ref{theorem on resonances} and \ref{theorem on sufficient conditions} and discuss the sharpness of the constants in Theorems \ref{Entropy for extension} and \ref{theorem on equivalence}.
\subsection{Acknowledgements}
I am grateful to Roman Bessonov for numerous discussions and constant attention to this work.

\section{Preliminaries}\label{section: preliminary information}
\subsection{Krein systems}\label{section Krein systems}
\subsubsection{General definitions}
Let $a\in L^1_{\loc}(\R_+)$ be a complex-valued function on the positive half-line $\R_+$. The Krein system with the coefficient $a$ is the following system of differential equations:
\begin{align}\label{Krein system}
\begin{cases}
   \frac{\partial}{\partial r}P(r,\lambda) = i\lambda P(r,\lambda) - \ol{a(r)}P_*(r,\lambda),&\quad \,\,P(0,\lambda) = 1,\\
   \frac{\partial}{\partial r}P_*(r,\lambda) = - a(r) P(r, \lambda),&\quad P_*(0,\lambda) = 1.
\end{cases}
\end{align}
After seminal work \cite{Krein} of M. Krein, the solutions of Krein system \eqref{Krein system} are called the continuous analogs of polynomials orthogonal on the unit circle. Using Krein systems, one can transfer methods from the theory of orthogonal polynomials on the unit circle to the spectral problems for self-adjoint differential operators.
Detailed account of this approach can be found in the paper \cite{Denisov2006} by S. Denisov.
\medskip

For any Krein system \eqref{Krein system} there exists a unique Borel measure $\sigma_a$ on the real line such that
$ (1 + x^2)^{-1}\in L^1(\R, \sigma_a)$
and the mapping
\begin{align}\label{spectral measure isometry for Krein system}
    f\mapsto \frac{1}{\sqrt{2\pi}}\int_0^{\infty}f(r)P(r,\lambda)\,dr,
\end{align}
initially defined on simple measurable functions with compact support, can be continuously extended to an isometry from $L^2(\R_+)$ to $L^2(\R, \sigma_a)$. This measure is called the spectral measure of the Krein system. The following theorem is called the Krein theorem and can be regarded as an analog of the \Szego theorem in the setting of the Krein systems.
\begin{knownthm}[Krein theorem; Section 8 in \cite{Denisov2006}; \cite{Teplyaev2005}]\label{Krein theorem}
    Let $\sigma_a$ be the spectral measure of Krein system \eqref{Krein system} and let $P, P_*$ be its solutions. Then the following assertions are equivalent:
    \begin{enumerate}
        \item[$(a)$] $\sigma_a$ belongs to Szeg\H{o} class \eqref{Szego class definition} on the real line;
        \item[$(b)$] \label{part b of Krein theorem } for some point $\lambda_0$ in $\Cm_+ = \{z\in\Cm\colon \Im z > 0\}$ we have
        \begin{align*}
            \int_0^{\infty}|P(r,\lambda_0)|^2 \, dr <\infty;
        \end{align*}
        \item[$(c)$] \label{part C of Krein theorem }
        there exist a sequence $r_n\to\infty$ and a number $\gamma\in [0,2\pi)$ such that for every $\lambda\in \Cm_+$ the limit
        \begin{gather*}
            \Pi(\lambda) = e^{-i\gamma}\lim_{n\to\infty}P_*(r_n,\lambda)
        \end{gather*}
        exists and defines an analytic in $\Cm_+$ function with $\Pi(i) > 0$.
    \end{enumerate}
\end{knownthm}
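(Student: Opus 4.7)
I would prove the equivalences in the cycle $(b)\Rightarrow(c)\Rightarrow(a)\Rightarrow(b)$. The central algebraic input is the Wronskian-type identity
\begin{gather*}
    |P_*(r,\lambda)|^2 - |P(r,\lambda)|^2 = 2\Im\lambda \int_0^r |P(s,\lambda)|^2\,ds, \qquad \lambda \in \Cm_+,
\end{gather*}
obtained by differentiating $|P|^2$ and $|P_*|^2$ along the system \eqref{Krein system}: the cross terms involving $a(r)$ cancel, and the initial conditions $P(0,\lambda)=P_*(0,\lambda)=1$ eliminate the constant. A consequence, combined with uniqueness of solutions of \eqref{Krein system}, is that $P_*(r,\cdot)$ has no zeros in $\Cm_+$, since $P_*(r,\lambda)=0$ would force $P(r,\lambda)=0$ and then $(P,P_*)\equiv(0,0)$.

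The implication $(b)\Rightarrow(c)$ is the most direct. Under (b), the identity together with continuity of $P(\cdot,\lambda_0)$ and $P(\cdot,\lambda_0) \in L^2(\R_+)$ produces a sequence $r_n\to\infty$ along which $|P(r_n,\lambda_0)|\to 0$, so $|P_*(r_n,\lambda_0)|^2\to 2\Im\lambda_0\|P(\cdot,\lambda_0)\|_{L^2(\R_+)}^2$. A Gronwall-type estimate using the integral form of \eqref{Krein system} and the pointwise bound $|P(r,\lambda)|\le|P_*(r,\lambda)|$ (valid throughout $\Cm_+$ by the identity) propagates this pointwise bound to a locally uniform bound on $|P_*(r_n,\cdot)|$ over compacts $K\Subset\Cm_+$. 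Montel's theorem yields a further subsequence converging locally uniformly in $\Cm_+$ to an analytic function $\Pi_0$, and a rotation $e^{-i\gamma}$ with $e^{-i\gamma}\Pi_0(i)>0$ gives (c).

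For $(c)\Rightarrow(a)$, I would introduce the Schur-type function $F(r,\lambda) = e^{2i\lambda r}P(r,\lambda)/P_*(r,\lambda)$, which lies in the closed unit ball of $H^\infty(\Cm_+)$ by the key identity (since $|P|\le|P_*|$ in $\Cm_+$ and $|e^{2i\lambda r}|\le 1$). Passing to the limit in (c) along $r_n$ and applying a standard Herglotz/scattering-type boundary computation, one identifies $|\Pi(x)|^{-2}$ with the density $w(x)$ of the absolutely continuous part of $\sigma_a$ for almost every $x\in\R$. The \Szego condition then reduces to the Poisson representation of $\log|\Pi|$ inside $\Cm_+$, which is available because $\Pi$ is non-vanishing and analytic in $\Cm_+$ with $\Pi(i)>0$.

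The deepest step, and what I expect to be the main obstacle, is $(a)\Rightarrow(b)$. Given $\sigma_a$ in the \Szego class with inverse \Szego function $\Pi$, the strategy is to reverse-engineer (b) from the isometry \eqref{spectral measure isometry for Krein system}: the Cauchy kernel $g_{\lambda_0}(x) = 1/(x-\lambda_0)$ lies in $L^2(\R,\sigma_a)$ thanks to the growth control $|\Pi(x)|^{-2}=w(x)$ combined with $\int(1+x^2)^{-1}d\sigma_a<\infty$, hence has a preimage $f_{\lambda_0}\in L^2(\R_+)$. A reproducing-kernel/duality computation in the de Branges space $\mathcal{H}(E_r)$ associated to the canonical system equivalent to \eqref{Krein system} should identify $f_{\lambda_0}$, up to an explicit normalising factor, with $P(\cdot,\lambda_0)|_{\R_+}$; the isometry then forces $P(\cdot,\lambda_0)\in L^2(\R_+)$. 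The delicate point is the identification of the preimage, which requires the Szeg\H{o}-type approximation of $1/\Pi$ by exponential polynomials in $L^2(\R,\sigma_a)$; I would follow the approach of Denisov \cite{Denisov2006} to carry this through.
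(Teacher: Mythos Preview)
The paper does not contain its own proof of this statement. Theorem~\ref{Krein theorem} is stated as a \emph{known} result (it is in the \texttt{knownthm} environment, lettered rather than numbered) and is attributed to Section~8 of \cite{Denisov2006} and to \cite{Teplyaev2005}; no proof, not even a sketch, is given here. So there is nothing in the paper to compare your proposal against. The paper only \emph{uses} the theorem, together with the sharpening in Lemma~\ref{lemma on condition (c) of Krein Theorem} (also just cited from \cite{Denisov2006}).

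That said, a couple of remarks on your sketch may still be useful. In the step $(b)\Rightarrow(c)$ your ``Gronwall-type estimate'' as written would only give $|P_*(r,\lambda)|\le\exp\bigl(\int_0^r|a|\bigr)$, which need not be bounded since $a$ is merely in $L^1_{\loc}$. The actual argument (as in \cite{Denisov2006}) propagates the $L^2$ bound from $\lambda_0$ to all of $\Cm_+$ via the mixed Christoffel--Darboux formula and reproducing-kernel identities, not via a naive Gronwall bound. In $(c)\Rightarrow(a)$ the identification $|\Pi(x)|^{-2}=w(x)$ cannot be obtained just from analyticity and non-vanishing of $\Pi$; one needs control on boundary behaviour (that $\Pi$ lies in a suitable Nevanlinna class), and the ``standard Herglotz/scattering-type boundary computation'' you invoke is where all the content of this implication lives. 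Your cycle and the algebraic inputs are the right ones, but if you intend to write this out you should follow \cite{Denisov2006} more closely at those two points.
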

If the equivalent assertions of the Krein theorem hold then (see Lemma 8.6 in \cite{Denisov2006}) $\Pi$ is the inverse \Szego function of $\sigma_a$, i.e., $\Pi$ is an outer function in $\Cm_+$ satisfying $|\Pi(x)|^{-2} = \sigma_a'(x)$
almost everywhere on $\R$, and
\begin{align}\label{Pi_definition}
    \Pi(\lambda) = \exp\left[ -\frac{1}{2\pi i}\ilim_{-\infty}^{\infty}\left(\frac{1}{s - \lambda} - \frac{s}{s^2 + 1}\right)\log \sigma_a'(s)\, ds\right],\quad\lambda\in\Cm_+.
\end{align}
We will use the following enhancement of assertion $(c)$ of the Krein theorem.
\begin{lemma}[Section 8, \cite{Denisov2006}]\label{lemma on condition (c) of Krein Theorem}
 Assume that $\sigma_a$ belongs to the \Szego class on the real line and the sequence $r_n\to\infty$ is such that $P(r_n, \lambda_0)\to 0$ as $n\to\infty$ for some point $\lambda_0\in \Cm_+$. Then there exists a constant $\gamma\in [0,2\pi)$ and a subsequence $n_k$ such that $P_*(r_{n_k}, \lambda)\to e^{i\gamma}\Pi(\lambda)$ as $k\to\infty$ for every $\lambda\in \Cm_+$.
\end{lemma}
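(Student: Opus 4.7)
The cornerstone is the Krein energy identity
\begin{equation*}
|P_*(r,\lambda)|^2 - |P(r,\lambda)|^2 = 2\Im\lambda \int_0^r |P(s,\lambda)|^2\,ds, \quad \lambda\in\Cm_+,
\end{equation*}
obtained by differentiating the left-hand side, substituting \eqref{Krein system}, and integrating from $0$ (both sides vanish at $r=0$ because $P(0,\lambda)=P_*(0,\lambda)=1$). The Szegő hypothesis on $\sigma_a$ together with Theorem \ref{Krein theorem}(b) and continuous dependence of ODE solutions on the parameter $\lambda$ gives $P(\cdot,\lambda) \in L^2(\R_+)$ with $\|P(\cdot,\lambda)\|_{L^2}$ locally bounded in $\lambda\in\Cm_+$. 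Feeding this into the identity, my plan is to deduce a locally uniform bound on $|P_*(r,\lambda)|$ over $r \ge 0$ and $\lambda$ in compact subsets of $\Cm_+$, and then apply Montel's theorem to extract a subsequence $r_{n_k}$ along which $P_*(r_{n_k},\cdot)$ converges locally uniformly to an analytic function $F$ on $\Cm_+$. Evaluating the identity at $\lambda=\lambda_0$ and using the hypothesis $P(r_n,\lambda_0)\to 0$ yields $|F(\lambda_0)|^2 = 2\Im\lambda_0\,\|P(\cdot,\lambda_0)\|^2_{L^2(\R_+)} > 0$, so $F\not\equiv 0$.

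Next I would identify $F$ as a unimodular multiple of the inverse Szegő function $\Pi$, following the identification step from the proof of Theorem \ref{Krein theorem}(c). The isometry \eqref{spectral measure isometry for Krein system} together with standard weak-$*$ considerations implies that the approximate spectral densities $|P_*(r_{n_k}, x)|^{-2}\,dx$ converge to the absolutely continuous part of $\sigma_a$ in a sense sufficient to conclude $|F(x)|^{-2} = \sigma_a'(x)$ for a.e.\ $x\in\R$. A Smirnov-class argument on $\Cm_+$ then shows $F$ is outer. Since, by \eqref{Pi_definition}, $\Pi$ is the unique outer function in $\Cm_+$ with that boundary modulus subject to $\Pi(i)>0$, the quotient $F/\Pi$ is a unimodular constant $e^{i\gamma}$ for a unique $\gamma \in [0,2\pi)$, completing the proof.

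The main obstacle is securing the locally uniform bound on $|P_*(r,\lambda)|$ independently of $r$: via the energy identity this reduces to local boundedness in $\lambda$ of both $\|P(\cdot,\lambda)\|_{L^2(\R_+)}$ and $|P(r,\lambda)|$ itself, and one must chase the dependence on $\lambda$ through the usual proof of $(a)\!\Rightarrow\!(b)$ in the Krein theorem to get uniformity. A secondary technical point is the precise identification of the boundary modulus of $F$ in the second step; this is standard but requires some care with the spectral representation. Once both are in place, Montel's theorem, the energy identity, and the uniqueness of outer functions assemble into the result.
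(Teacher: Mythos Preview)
The paper's own proof is a one-line citation: it defers entirely to Lemma~8.5 and the proof of Lemma~8.6 in Denisov's survey \cite{Denisov2006}, so there is no self-contained argument in the paper to compare against. Your sketch is, in outline, exactly what those referenced lemmas do: the Christoffel--Darboux identity \eqref{second CD formula} gives a locally uniform bound on $|P_*(r,\lambda)|$ in $\Cm_+$, Montel extracts a normal limit $F$, the hypothesis $P(r_n,\lambda_0)\to 0$ forces $F\not\equiv 0$, and the outer-function uniqueness pins $F$ to $e^{i\gamma}\Pi$.

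One small correction: the line ``continuous dependence of ODE solutions on $\lambda$ gives $P(\cdot,\lambda)\in L^2(\R_+)$ with locally bounded norm'' is not how this step actually goes---continuous dependence yields only pointwise control. The standard route (and what Denisov's Lemma~8.5 does) is to observe that $\int_0^r|P(s,\lambda)|^2\,ds = 2\pi k_r(\lambda,\lambda)$ is the diagonal of a reproducing kernel, hence $\log$ of it is subharmonic in $\lambda$; combined with finiteness at a single $\lambda_0$ (the \Szego assumption) this yields the required locally uniform bound. You correctly flag this as the main obstacle later in your write-up, so you are aware it needs more than an ODE estimate. The second step (identifying $|F|^{-2}$ with $\sigma_a'$ and showing $F$ is outer) is indeed the content of Lemma~8.6, and your description of it via weak-$*$ convergence of $|P_*(r,x)|^{-2}\,dx$ plus a Smirnov-class argument is accurate.
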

\begin{proof}
The claim immediately follows from Lemma 8.5 and the proof of Lemma 8.6 from~\cite{Denisov2006}.
\end{proof}

\subsubsection{Reproducing kernels and the minimization problem.}\label{Section reproducing kernels and the minimization problem} 
Let $PW_{[0, r]}$ denote the Paley-Wiener space of entire functions $f$ that can be represented in the form
\begin{align*}
    f(z) = \int_0^r\phi(s)e^{izs}\,ds,\quad z\in\Cm,\quad\phi\in L^2[0,r].
\end{align*}
The function 
\begin{gather}\label{formula for reproducing kernel via Krein system}
    k_r(z',z) = \frac{1}{2\pi}\int_0^r P(s,z)\ol{P(s,z')}\, ds
\end{gather}
is the reproducing kernel in $PW_{[0,r]}$ at the point $z'$, see Lemma 8.1 in \cite{Denisov2006}. In other words, for every $f\in PW_{[0,r]}$ we have
\begin{gather*}
    f(z') = \langle f, k_r(z',\cdot)\rangle_{L^2(\sigma_a)} = \int_{\R} f(x)\ol{k_r(z',x)}\, d\sigma_a(x).
\end{gather*}
For $r >0$, define
\begin{align}\label{definition of the minimization function for Krein system}
    \bm_r(z) = \bm_r(\sigma_a, z) &= \inf\left\{\frac{1}{2\pi}\norm[f]^2_{ L^2(\sigma_a)} \colon  f\in PW_{[0,r]},\; f(z) = 1\right\},\quad z\in\Cm.
\end{align}
The function $\bm_r$ is the analog of the Christoffel function \eqref{Christoffel minimizing function OPUC}. Lemma 8.2 in \cite{Denisov2006} says that
\begin{gather}
\label{m_r function formula}
    \bm_r(z) = \left(2\pi k_r(z,z)\right)^{-1} = \left(\int_0^r|P(s, z)|^2\,ds\right)^{-1},\quad z\in \Cm.
\end{gather}
For every $\lambda,\mu\in\Cm$, the functions $P$, $P_*$ satisfy the following Christoffel-Darboux formula:
\begin{align}
    \label{first CD formula}
    P(r,\lambda)\ol{P(r,\mu)} - P_*(r,\lambda)\ol{P_*(r,\mu)} &= i(\lambda - \ol{\mu})\int_0^r P(s,\lambda)\ol{P(s,\mu)}\,ds,
    \\
    \label{second CD formula}
    |P_*(r,\lambda)|^2 - |P(r,\lambda)|^2 &= 2\Im \lambda \int_0^r |P(s,\lambda)|^2\, ds,
\end{align}
see Lemma 3.6 in \cite{Denisov2006}.
Furthermore, a simple calculation shows that 
\begin{gather}
    \label{Krein system reflection formula}
    P(r, z) = e^{i z r}\ol{P_*(r,\ol{ z})},\qquad P_*(r, z) = e^{i z r}\ol{P(r,\ol{ z})},\qquad r\ge 0,\quad z\in\Cm.
\end{gather}
Together with Theorem \ref{Krein theorem}, relation \eqref{second CD formula} gives
\begin{gather}
    \label{expression for the szego function}
    |\Pi(\lambda)|^2 = 2\Im \lambda \int_0^{\infty} |P(s,\lambda)|^2\, ds,\quad \lambda\in\Cm_+. 
\end{gather}
Define $\bm_{\infty}(z) = \inf_r \bm_{r}(z)$. It follows that $\bm_{\infty}$ can be represented by
\begin{gather}
    \label{m infinity formula}
    \bm_{\infty}(\lambda) = \left(\int_0^{\infty}|P(r, \lambda)|^2\,dr\right)^{-1} = \frac{2\Im \lambda}{|\Pi(\lambda)|^2},\quad \lambda\in \Cm_+.
\end{gather}

\subsubsection{Connections with the Dirac operator.}\label{section Connections with the Dirac operator}
Consider a dual Krein system, i.e., Krein system \eqref{Krein system} with the coefficient $-a$, and denote its solutions by $\hat P, \hat{P}_*$. It can be verified (see Section 4 in \cite{Denisov2006}) that $\hat P$ and $-\hat P_*$ solve the same differential system \eqref{Krein system}  as $P$ and $P_*$ but with the initial value $\left(\begin{smallmatrix}1\\-1\end{smallmatrix}\right)$. This can be rewritten in the form 
\begin{gather}
    \label{krein system in a matrix form}
    X'(r, \lambda) = 
    \begin{pmatrix}
     i\lambda &-\ol{a(r)}
     \\
     -a(r) & 0
    \end{pmatrix}
    X(r,\lambda),
    \quad 
    X(0,\lambda) = 
    \left(\begin{smallmatrix}
    1 & 1
     \\
     1 & -1
    \end{smallmatrix}\right),
\end{gather}
where
\begin{gather*}
    X(r,\lambda) = 
    \begin{pmatrix}
     P(r,\lambda) & \hat P(r,\lambda)
     \\
     P_*(r,\lambda) & -\hat P_*(r,\lambda)
    \end{pmatrix}.
\end{gather*}
Define 
\begin{gather}
    \label{dirac potential from the krein system}
    p_a(r) = -2\Re a(2r),\qquad q_a(r)  = 2\Im a(2r), \qquad Q_a = 
    \begin{pmatrix}
     -q_a(r) & p_a(r)
     \\
     p_a(r) & q_a(r)
    \end{pmatrix}.
\end{gather}
A calculation shows (see Chapter 13 in \cite{Denisov2006}) that
\begin{gather*}
    Y(r,\lambda) = \frac{e^{-i\lambda r}}{2}
    \left(\begin{smallmatrix}
     1 & 1
     \\
     -i & i
    \end{smallmatrix}\right)
    X(2r,\lambda)
\end{gather*}
solves the differential system 
\begin{gather*}
    \left(\begin{smallmatrix}
    0 & 1\\
    -1 & 0
    \end{smallmatrix}\right)
    Y'(r,\lambda) + Q_aY(r,\lambda) = \lambda Y(t,\lambda),\quad Y(0,\lambda) =
    \left(\begin{smallmatrix}
    1 & 0
      \\
    0 & -i
    \end{smallmatrix}\right).
\end{gather*}
This differential system differs from Dirac system \eqref{Dirac system} only in the definition of the square root of the minus identity matrix. It can be easily seen that $f = (f_1, f_2)^T$ solves $\left(\begin{smallmatrix}0&1\\-1&0\end{smallmatrix}\right) f + Q_af = \lambda f$ if and only if $f^\# = (f_1, -f_2)$ solves  $J f^\# - Q_af^\# = \lambda f^\#$. Therefore, the fundamental solution of Dirac system \eqref{Dirac system} with the potential $-Q_a$ can be expressed in terms of $P, P_*, \hat P, \hat P_*$:
\begin{gather}\label{dirac and krein system expressions}
    \begin{pmatrix}
    \theta_+(t,\lambda) &\phi_+(t,\lambda)
    \\
    \theta_-(t,\lambda) &\phi_-(t,\lambda)
    \end{pmatrix} = \frac{e^{-i\lambda r}}{2}
    \begin{pmatrix}
     P(2r,\lambda) + P_*(2r,\lambda) &  i\hat P(2r,\lambda) - i\hat P_*(2r,\lambda)
     \\
     iP(2r,\lambda) - i P_*(2r,\lambda) & -\hat P(2r,\lambda) - \hat P_*(2r,\lambda)
    \end{pmatrix}.
\end{gather}

Theorem 13.1 in \cite{Denisov2006} provides a relation between the spectral measures of $D_{-Q_a}$ and the Krein system with the coefficient $a$.  
Our normalization in definitions \eqref{Dirac spectral measure isometry} and \eqref{spectral measure isometry for Krein system} of spectral measures differs from the one in \cite{Denisov2006}. Because of that Theorem 13.1 actually implies the coincidence of the measures $\sigma_{-Q_a}$ and $\sigma_a$.

\medskip

Using relation \eqref{dirac potential from the krein system} the Dirac system can be rewritten in the Krein system and the other way around. We will say that $D_{-Q_a}$ and the Krein system with the coefficient $a$ correspond to each other.

\medskip

By \eqref{Dirac Weyl function definition} and \eqref{dirac and krein system expressions}, for every Dirac system we have
\begin{gather*}
    m(z) = -\lim_{r\to\infty} \frac{\phi_+(r,z)}{\theta_+(r,z)} = - \lim_{r\to\infty}\frac{i\hat P(2r,z) - i\hat P_*(2r,z)}{ P(2r,z) + P_*(2r,z)} = \lim_{r\to\infty}\frac{ i\hat P_*(r,z) - i\hat P(r,z)}{P_*(r,z) + P(r,z)},\quad z\in\Cm_+.
\end{gather*}
In the \Szego case, both $P(\cdot, z)$ and $\hat P(\cdot, z)$ are in $L^2(\R_+)$, consequently there exists a sequence $r_n\to\infty$ such that $P(r_n, z)\to 0$ and $\hat P(r_n, z)\to 0$ as $n\to\infty$. Hence
\begin{gather*}
    m(z) = i\lim_{n\to\infty}\frac{\hat P_*(r_n,z)}{ P_*(r_n,z)},\quad z\in\Cm_+.
\end{gather*}
By Lemma \ref{lemma on condition (c) of Krein Theorem}, we can choose a subsequence $n_k$ such that both numerator and denominator converge. Therefore
there exists a constant $\gamma\in[0,2\pi)$ such that
\begin{gather}\label{Weyl function via the Szego function}
    m(z) = e^{i\gamma}\frac{\hat\Pi(z)}{\Pi(z)},\quad z\in \Cm_+,
\end{gather}
where $\Pi$ and $\hat{\Pi}$ are the inverse \Szego functions of $\sigma_{-Q_a}$ and $\sigma_{Q_a}$ respectively.
The latter equation will be crucial in the proof of part \eqref{entropy for extension part 3} of Theorem \ref{Entropy for extension}.

\subsection{Canonical systems}\label{Section canonical systems}
The entropy approach is based on the reduction of Dirac and Krein systems to a more general form of differential equations -- the canonical Hamiltonian system. Below we introduce key objects and definitions of the theory, for details see \cite{Remling2018} and \cite{Romanov2014}.
\medskip

A Hamiltonian is a matrix-valued mapping of the form 
\begin{gather}\label{canonical system general form of the hamiltonian}
    H = \begin{pmatrix}h_1&h\\h&h_2\end{pmatrix},
\end{gather}
where $h, h_1$ and $h_2$ are real-valued functions from $L^1_{\loc}(\R_+)$. Moreover, it is assumed that $H$ satisfies $\trace H(t) > 0$ and $\det H(t) \ge 0$ for every $t\ge 0$.
A Hamiltonian $H$ is called singular if 
$ \int_{\R_+}\trace H(s)\, ds = +\infty$. We will call a Hamiltonian $H$ trivial if it coincides with $\left(\begin{smallmatrix}1&0\\0&0\end{smallmatrix}\right)$ or $\left(\begin{smallmatrix}0&1\\0&1\end{smallmatrix}\right)$ and nontrivial otherwise.
The canonical system with Hamiltonian $H$ is the differential equation
\begin{gather}\label{hamiltonian canonical system}
    J\frac{\partial}{\partial t} M(t, z)  = zH(t) M(t, z),
    \quad
    M(0, z)  = \left(\begin{smallmatrix}1&0\\0&1\end{smallmatrix}\right),\quad z\in \Cm, \quad t\ge 0.
\end{gather}
The solution $M$ of \eqref{hamiltonian canonical system} is often represented as
\begin{gather}\label{solution of a canonical system}
    M(t, z) =(\Theta(t, z), \Phi(t,z)) = 
    \begin{pmatrix}
    \Theta_+(t, z) & \Phi_+(t,z))
    \\
    \Theta_-(t, z) & \Phi_-(t,z))
    \end{pmatrix}.
\end{gather}
The Weyl function of the canonical system is defined by
\begin{gather*}
    m(z)=\lim_{t\to\infty}\frac{w\Phi_+(t, z) + \Phi_-(t, z)}{w\Theta_+(t, z) + \Theta_-(t, z)} ,\quad z\in\Cm_+,\quad w\in\Cm\cup\{\infty\}.
\end{gather*}
If the Hamiltonian is singular, this limit is correctly defined and does not depend on $w$. 
Furthermore, $m$ has a strictly positive imaginary part in $\Cm_+$ and therefore admits the following Herglotz representation:
\begin{gather}\label{Herglotz representation}
    m(z) = \frac{1}{\pi}\int_{\R}\left(\frac{1}{x - z} - \frac{x}{x^2 + 1}\right)d\sigma(x) + az + b, 
\end{gather}
where $a\in\R$ and $b\ge0$ are constants and $\sigma$ is a Borel measure satisfying $ (1 + x^2)^{-1}\in L^1(\R, \sigma)$. The measure $\sigma$ is called the spectral measure of $H$ and of canonical system \eqref{hamiltonian canonical system}.
\subsubsection{Reduction of the Dirac system to the canonical system.}\label{Section reduction of the Dirac operator to the canonical system}
As we mentioned it earlier, \eqref{hamiltonian canonical system} is a more general form of a differential system than \eqref{Dirac system} or \eqref{Krein system}. In this subsection we outline the reduction of Dirac system \eqref{Dirac system} to the Hamiltonian canonical system. We omit the calculations, for a more detailed explanation see \cite{Romanov2014} or Section 2.4 in \cite{Bessonov2020}.
\medskip

Consider Dirac system \eqref{Dirac system} with the potential $Q$ and define  $N_Q(t)  = N(t, 0)$. Then $M(t,z) = N_Q^{-1}(t)N(t,z)$ is the fundamental solution of canonical system \eqref{hamiltonian canonical system} with the Hamiltonian 
\begin{gather*}
    H_Q(t) = N_Q^*(t)N_Q(t).
\end{gather*}
Moreover, the spectral measure of the canonical system with Hamiltonian $H_Q$ coincides with the spectral measure of the Dirac operator $D_Q$  (see  Section 2.4 in \cite{Bessonov2020}).

\subsubsection{Entropy of a canonical system.} \label{section Entropy of Canonical systems}
In the papers \cite{Bessonov2020}, \cite{Bessonov2021} R. Bessonov and S. Denisov described the class of Hamiltonians for which the corresponding spectral measure belongs to the \Szego class on the real line. They introduced the criterion in terms of the entropy function of the Hamiltonian. Let us define it. \medskip

Consider an arbitrary singular nontrivial Hamiltonian $H$ and let $H_r$ be its $r$-shift, i.e., let $H_r$ be such that $H_r(x) = H(r + x)$, $r,x\ge 0$. Denote by $m_r, \sigma_r$ and $ w_r$ the Weyl function, the spectral measure and the density of the spectral measure corresponding tho the canonical system with the Hamiltonian $H_r$. Next, define
\begin{align*}
    \cI_H(r) = \Im m_r(i),\qquad 
    \cR_H(r) = \Re m_r(i) 
    ,\qquad \cJ_H(r) = \frac{1}{\pi}\int_{\R} \frac{\log(w_r(x))}{1 + x^2}\,dx.
\end{align*}
The entropy function $\K_H$ of $\sigma$ is defined as
\begin{gather}
    \label{definition entropy function of the measure}
    \K_H(r) = \log \cI_H(r) - \cJ_H(r), \quad r\ge 0.
\end{gather}
If $\sqrt{\det H}\notin L^1(\R_+)$ then we can define 
\begin{gather}\label{definition of the entropy for the hamiltonian}
    \bK_H(r) = \sum_{n\ge 0} \left(\det\int_{\eta_{n}(r)}^{\eta_{n + 2}(r)}  H(t)\,dt -4\right),\quad \eta_n(r) = \min\left\{x\colon \int_r^x\sqrt{\det H(t)}dt = n\right\}.
\end{gather}
The function $\bK_H$ is called the entropy function of the Hamiltonian $H$. The main result of \cite{Bessonov2021} is the following theorem.
\begin{knownthm}[Theorem 1.2, \cite{Bessonov2021}]\label{known theorem on two entropies}
    The spectral measure of a singular nontrivial Hamiltonian $H$  belongs to Szegő class \eqref{Szego class definition} if and only if $\sqrt{\det H}\notin L^1(\R_+)$ and $\bK_H(0) <\infty$. Moreover, we have
    \begin{gather*}
        c_1 \K_H(r)\le \bK_H(r) \le c_2 \K_H(r) \cdot e^{c_2 \K_H(r)},
    \end{gather*}
    where $c_1$ and $c_2$ are some absolute constants.
\end{knownthm}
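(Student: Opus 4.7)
The plan is to reduce the \Szego-class characterization to the quantitative bound $c_1\K_H(r)\le \bK_H(r)\le c_2\K_H(r)e^{c_2\K_H(r)}$, and then to prove this bound by a local analysis along pairs of $\sqrt{\det H}$-arclength intervals. The reduction is short: the Herglotz formula \eqref{Herglotz representation} for $m_r$ together with Jensen's inequality yields $\cJ_H(r)\le \log\cI_H(r)$, so $\K_H(r)\ge 0$, and the standard theory of canonical systems identifies $\K_H(0)<\infty$ with the \Szego condition $\int \log w/(1+x^2)\,dx>-\infty$. Assuming the two-sided bound, both implications in the first assertion follow at once; the hypothesis $\sqrt{\det H}\notin L^1(\R_+)$ is forced on the \Szego side since otherwise the partition used in \eqref{definition of the entropy for the hamiltonian} terminates and $\bK_H$ is not even defined.

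For the lower bound $c_1\K_H\le \bK_H$, since $\sqrt{\det H}\notin L^1(\R_+)$ the points $\eta_n(r)$ partition $[r,\infty)$; set $W_n=[\eta_n(r),\eta_{n+2}(r)]$ and $A_n=\int_{W_n} H$, so that $\bK_H(r)=\sum_n e_n$ with $e_n=\det A_n-4$. Non-negativity of $e_n$ follows from Minkowski's integral determinant inequality $\det\int_{W_n} H\ge \left(\int_{W_n}\sqrt{\det H}\right)^2=4$. The canonical transfer matrix over $W_n$ at $z=i$ has the form $T_n=I+iJA_n+R_n$ with a remainder $R_n$ quadratic in $A_n$, and its fractional-linear action on the Weyl half-plane contracts the Weyl disc of $H_{\eta_{n+2}(r)}$ into that of $H_{\eta_n(r)}$. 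Using $\det(I+iJA)=1+\trace A-\det A$, a direct $2\times 2$ computation shows that a single step changes $\log\cI_H-\cJ_H$ by a quantity comparable to $e_n$ whenever $\cI_H$ stays bounded away from $0$ and $\infty$; summing over $n$ gives the lower bound.

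The main obstacle is the upper bound $\bK_H\le c_2\K_H e^{c_2\K_H}$. The M\"obius action by $T_n$ is nearly singular when the Weyl disc is squeezed close to $\R$: a local defect $e_n$ can then inflate $\Im m_{\eta_n(r)}(i)$ by a factor as large as $e^{c e_n}$, which is the origin of the exponential factor in the estimate. I would control this by a Gronwall-type induction that tracks both the center and the radius of the Weyl disc along the chain $\{W_n\}$, uses the Schwarz-Pick contraction to absorb $R_n$, and yields a discrete differential inequality on the partial sums of $e_n$ in terms of $\K_H$. Integrating this inequality produces the advertised $e^{c_2\K_H}$ factor. The exponential is inherent rather than an artifact of the method, as ``spiky'' Hamiltonians concentrated in a single window show; the most delicate point is the bookkeeping on the Weyl-disc geometry when $R_n$ is non-negligible.
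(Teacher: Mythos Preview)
This theorem is not proved in the present paper. It is stated as a \emph{known} result (note the \texttt{knownthm} environment and the attribution ``Theorem 1.2, \cite{Bessonov2021}''), and the paper simply invokes it where needed --- chiefly in Section~\ref{short proof of the first theorem} to pass between $E_Q$ and $\K_H$, and in Corollary~\ref{corollary from extension to the decay of entropy}. There is no proof in this paper to compare your proposal against; the proof lives in the cited Bessonov--Denisov paper.

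As for your sketch on its own terms: it is a plan, not a proof, and the reduction step is circular. You write that ``the standard theory of canonical systems identifies $\K_H(0)<\infty$ with the \Szego condition,'' but that identification \emph{is} the content of the first assertion --- the definition \eqref{definition entropy function of the measure} gives $\K_H(0)=\log\cI_H(0)-\cJ_H(0)$, and finiteness of $\cJ_H(0)$ is exactly the \Szego integral condition, so this direction is trivial; the substance is the equivalence with $\bK_H(0)<\infty$, which you are trying to derive from the two-sided inequality, not assume. Your lower-bound argument (``a direct $2\times 2$ computation shows that a single step changes $\log\cI_H-\cJ_H$ by a quantity comparable to $e_n$ whenever $\cI_H$ stays bounded away from $0$ and $\infty$'') hides the entire difficulty behind the proviso: controlling $\cI_H$ along the chain is precisely what must be established, and it is not automatic. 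The upper-bound paragraph is an honest outline but stops short of any concrete estimate. If you want to actually prove this, consult \cite{Bessonov2021} directly.
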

In the present paper we are interested in the case when $ \bK_{H}(r)$ decreases exponentially fast in $r$. It is equivalent to the exponential decay of $\K_H(r)$ and, formally, can be written as
\begin{gather}\label{assertion entropy exponential decay}
    \sum_{n\ge 0} \left(\det\int_{\eta_{n}(r)}^{\eta_{n + 2}(r)}  H(t)\,dt -4\right) = O\left(e^{-\delta r}\right), \quad r\to\infty
\end{gather}
for some $\delta > 0$. We have $\eta_{n + 1}(r) = \eta_n(\eta_1(r))$ hence the latter in its turn is equivalent to the exponential decay of the first term of the sum, i.e., 
\begin{gather*}
    \det\int_{\eta_{0}(r)}^{\eta_2(r)}  H(t)\,dt -4 = \bK_H(r) - \bK_H(\eta_1(r)) = O\left(e^{-\delta r}\right), \quad r\to\infty.
\end{gather*}
Notice that if $H= H_Q$ is the Hamiltonian constructed for the Dirac operator $D_Q$ in  Section \ref{Section reduction of the Dirac operator to the canonical system}, then $\det H(t) = 1$ for every $t\ge 0$ and consequently $\eta_n(r) = n + r$ for every $n,r \ge 0$. In this situation assertion \eqref{assertion entropy exponential decay} becomes
\begin{gather*}
    \det\int_{r}^{r + 2}  H_Q(t)\,dt -4 = O\left(e^{-\delta r}\right), \quad r\to\infty,
\end{gather*}
which is exactly the assertion $E_Q(r) = O\left(e^{-\delta r}\right)$ from Theorem \ref{Entropy for extension}.

\subsection{Regularized Krein systems}\label{Section regularized Krein system}
Fix a singular nontrivial Hamiltonian $H$. Assume that the spectral measure $\sigma$ corresponding to $H$ belongs to the \Szego class and let $\K_H, \cI_H$ and $\cR_H$ be as in the previous section.  In order to simplify the exposition we will omit the index $H$ later on.  The regularized Krein system corresponding to $H$ is the following system of differential equations:
\begin{align}
    \label{diff equation for tilde P_*}
    \frac{\d}{\d r}\Tilde{P}_r^*(z) &= (z - i)f_1(r)\Tilde{P}_r(z) + izf_2(r) \Tilde{P}_r^*(z), &\Tilde{P}_0^*(z)= I_H(0)^{-1},
    \\
    \label{diff equation for tilde P}
    \frac{\d}{\d r}\Tilde{P}_r(z)
    &= iz \Tilde{P}_r(z) + (z + i)\ol{f_1(r)}\Tilde{P}_r^*(z) - iz\ol{f_2(r)}\Tilde{P}_r(z),&\Tilde{P}_0(z) = 0,
\end{align}
where $f_1(r)=-\frac{1}{4}e^{2iu(r)}\left(\frac{\cR'(r)}{\cI(r)} + i\frac{\cI'(r)}{\cI(r)}\right)$ with $u(r) = \int_0^r\frac{\cR'(t)}{2\cI(t)}dt$ and $f_2(r) = \frac{\K'(r/2)}{4}$. Notice that by Lemma 3 in \cite{Bessonov2020}, $\K_H, \cI_H$ and $\cR_H$ are locally absolutely continuous and the differential system is well-defined. 
\medskip

First of all, let us show that the regularized Krein system defined in \cite{Bessonov2020} coincides with the one defined by \eqref{diff equation for tilde P_*} and \eqref{diff equation for tilde P}. Denote by $\Tilde{\bP}_r$ and $\Tilde{\bP}_r^*$ the regularized Krein system from \cite{Bessonov2020}. We claim that $\Tilde{\bP}_r = \Tilde{P}_r $ and $\Tilde{\bP}_r^* = \Tilde{P}_r^* $. Indeed, the initial values of $\Tilde{P}_r$ and $\Tilde{P}_r^*$ is chosen so that it coincides with the initial values of $\Tilde{\bP}_r$ and $\Tilde{\bP}_r^*$ hence it is suffices to show that $\Tilde{\bP}_r$ and $\Tilde{\bP}_r^*$ satisfy differential equations \eqref{diff equation for tilde P_*} and \eqref{diff equation for tilde P}. Equation \eqref{diff equation for tilde P_*} follow from Lemma 8 in \cite{Bessonov2020} by the change of variables. To establish \eqref{diff equation for tilde P}, notice that $(37)$ in \cite{Bessonov2020} yields $\Tilde \bP_r(z) = e^{irz} \ol{\Tilde{\bP}_r^*(\ol{z})}$ and
\begin{align}
    \nonumber
    \frac{\d}{\d r}\Tilde{\bP}_r(z) &= \frac{\d}{\d r}\left(e^{irz}\ol{\Tilde{\bP}^*_r(\ol{z})}\right) = iz\cdot e^{irz}\ol{\Tilde{\bP}^*_r(\ol{z})} + e^{irz}\cdot\frac{\d}{\d r}\ol{\Tilde{\bP}^*_r(\ol{z})} 
    \\
    \nonumber
    &= iz \Tilde{\bP}_r(z) +  e^{irz}\ol{\left((\ol z - i)f_1(r)\Tilde{\bP}_r(\ol{z}) + i\ol{z}f_2(r) \Tilde{\bP}_r^*(\ol{z})\right)} 
    \\
    \label{establishment of the second diff equation}
    &= iz \Tilde{\bP}_r(z) + (z + i)\ol{f_1(r)}\Tilde{\bP}_r^*(z) - iz\ol{f_2(r)}\Tilde{\bP}_r(z).
\end{align}
\subsection{Properties of the regularized Krein systems}\label{Section a careful look at the regularized Krein system}
In this subsection we discuss properties of the regularized Krein system. Lemmas \ref{convergence of the regularized system} and \ref{coefficients bound} are respectively Lemmas 9 and 8 from \cite{Bessonov2020}, in Lemma \ref{coefficients bound} we will need more accurate bounds than provided in \cite{Bessonov2020} so we state it with a proof (which is almost identical to the one in \cite{Bessonov2020}). The following lemma is a simple corollary of the differential equations for $\Tilde{P}_r^*$ and $\Tilde{P}_r$.
\begin{lemma}
The functions $\Tilde{P}_r^*$ and $\Tilde{P}_r$ satisfy the reflection formula
\begin{align}
    \label{inversion formula}
    \Tilde P_r^*(z) &= e^{irz} \ol{\Tilde{P}_r(\ol{z})},\quad \Tilde P_r(z) = e^{irz} \ol{\Tilde{P}_r^*(\ol{z})}.
\end{align} 
Moreover, the  following differential equations hold for the absolute values of $\Tilde P_r$ and $\Tilde P_r^*$: 
\begin{align}
\label{diff equation for modulus of P_*}
    \frac{\d}{\d r}|\Tilde{P}_r^*(z)|^2  &= 2\Re\left((z - i)f_1(r)\Tilde{P}_r(z)\ol{\Tilde{P}_r^*(z)}\right) - 2\Im z f_2(r) |\Tilde{P}_r^*(z)|^2,
    \\
    \label{diff equation for modulus of P}
    \frac{\d}{\d r}|\Tilde{P}_r(z)|^2 
    &=-2\Im z |\Tilde{P}_r(z)|^2 + 2\Re\left((z + i)\ol{f_1(r)}\Tilde{P}_r^*(z)\ol{\Tilde{P}_r(z)} \right) + 2\Im z f_2(r) |\Tilde{P}_r(z)|^2.
\end{align} 
\end{lemma}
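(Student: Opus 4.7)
The plan is to derive the reflection formula from uniqueness of solutions to linear ODE systems, and then obtain the differential equations for the squared moduli by a direct Leibniz computation that crucially uses the fact that $f_2(r) = \K'(r/2)/4$ is real-valued (by the local absolute continuity of $\K$ noted after \eqref{diff equation for tilde P}).

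For the reflection formula, I introduce auxiliary functions $A(r,z) := e^{irz}\ol{\Tilde P_r^*(\ol z)}$ and $B(r,z) := e^{irz}\ol{\Tilde P_r(\ol z)}$ and verify that the pair $(B,A)$ satisfies the same ODE system as $(\Tilde P_r^*,\Tilde P_r)$ with matching initial data. Concretely, differentiating $B$ in $r$, substituting \eqref{diff equation for tilde P} taken at argument $\ol z$, and taking complex conjugates (using $\ol{f_2(r)} = f_2(r)$) yields
\begin{gather*}
    \frac{\d}{\d r} B(r,z) = iz\, B(r,z) + e^{irz}\ol{\tfrac{\d}{\d r}\Tilde P_r(\ol z)} = (z-i) f_1(r)\, A(r,z) + i z f_2(r)\, B(r,z),
\end{gather*}
which is exactly the right-hand side of \eqref{diff equation for tilde P_*} with $\Tilde P_r^*$ replaced by $B$ and $\Tilde P_r$ by $A$. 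An analogous manipulation on $A$ reproduces the right-hand side of \eqref{diff equation for tilde P}. This is precisely the computation carried out in \eqref{establishment of the second diff equation}, only run in the opposite direction. Invoking uniqueness for the linear Cauchy problem then gives $B = \Tilde P_r^*$ and $A = \Tilde P_r$, which is the reflection formula.

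For the differential equations satisfied by the squared moduli I simply apply $\frac{\d}{\d r}|f|^2 = 2\Re(\ol f\cdot f')$ to $f = \Tilde P_r^*(z)$ and to $f = \Tilde P_r(z)$ and plug in the right-hand sides of \eqref{diff equation for tilde P_*} and \eqref{diff equation for tilde P}. The cross terms come out as $2\Re\bigl((z-i) f_1(r)\Tilde P_r(z)\ol{\Tilde P_r^*(z)}\bigr)$ and $2\Re\bigl((z+i)\ol{f_1(r)}\Tilde P_r^*(z)\ol{\Tilde P_r(z)}\bigr)$ respectively. The diagonal contributions come from terms of the form $iz f_2(r)|\Tilde P_r^*(z)|^2$ and $-iz\ol{f_2(r)}|\Tilde P_r(z)|^2$; since $f_2$ is real, $\Re(iz f_2(r) c) = -\Im z\cdot f_2(r) c$ for every $c\in\R$, producing the signs $-2\Im z\, f_2(r)|\Tilde P_r^*(z)|^2$ and $+2\Im z\, f_2(r)|\Tilde P_r(z)|^2$ that appear in the statement.

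I do not anticipate any real obstacle: both parts are direct computations. The only care needed is in bookkeeping of the complex conjugation (in particular $\ol{iz} = -i\ol z$, and the distinction between $f_1(r)$ and $\ol{f_1(r)}$ in the two equations), and in using at the correct places that $f_2$ is real. The reality of $f_2$ is what makes both the reflection formula and the absolute-value ODEs consistent; had $f_2$ possessed a nontrivial imaginary part, additional cross terms would appear in both computations.
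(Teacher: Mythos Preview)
Your proof is correct and essentially identical to the paper's: the derivation of the modulus equations \eqref{diff equation for modulus of P_*}--\eqref{diff equation for modulus of P} is exactly the Leibniz computation the paper carries out verbatim, and your ODE--uniqueness derivation of \eqref{inversion formula} is just a repackaging of the computation labeled \eqref{establishment of the second diff equation}. The only difference is cosmetic: the paper does not rederive the reflection formula from scratch but simply refers back to that passage, where the identity was imported from (37) in \cite{Bessonov2020} during the identification $\Tilde P_r=\Tilde{\bP}_r$, $\Tilde P_r^*=\Tilde{\bP}_r^*$.
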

\begin{proof}
Equations in \eqref{inversion formula} follow from \eqref{establishment of the second diff equation}.  The rest of the proof is a straightforward calculation. Indeed, we have
\begin{align*}
    \frac{\d}{\d r}|\Tilde{P}_r^*(z)|^2 &= 2\Re\left(\ol{\Tilde{P}_r^*(z)}\frac{\d}{\d r}\Tilde{P}_r^*(z)\right) = 2\Re\left(\ol{\Tilde{P}_r^*(z)}\left((z - i)f_1(r)\Tilde{P}_r(z) + izf_2(r) \Tilde{P}_r^*(z)\right)\right)
    \\
    &=2\Re\left((z - i)f_1(r)\Tilde{P}_r(z)\ol{\Tilde{P}_r^*(z)}\right) - 2\Im z\Re f_2(r) |\Tilde{P}_r^*(z)|^2;
    \\
    \frac{\d}{\d r}|\Tilde{P}_r(z)|^2 &= 2\Re\left(\ol{\Tilde{P}_r(z)}\frac{\d}{\d r}\Tilde{P}_r(z)\right) 
    \\
    &= 2\Re\left(\ol{\Tilde{P}_r(z)}\left(iz \Tilde{P}_r(z) + (z + i)\ol{f_1(r)}\Tilde{P}_r^*(z)  -iz\ol{f_2(r)}\Tilde{P}_r(z)\right)\right) 
    \\
    &=-2\Im z |\Tilde{P}_r(z)|^2 + 2\Re\left((z + i)\ol{f_1(r)}\Tilde{P}_r^*(z)\ol{\Tilde{P}_r(z)} \right) + 2\Im z\Re f_2(r) |\Tilde{P}_r(z)|^2.   
\end{align*}
\end{proof}
Further in the paper we will use the symbols $\ls$ and $\gs$ meaning that the corresponding inequality $\le$ or $\ge$ holds with some multiplicative constant depending only on fixed parameters. We will use the symbol $\approx$ when both $\ls$ and $\gs$ hold.
\medskip

The spectral measure $\sigma$ belongs to \Szego class \eqref{Szego class definition} hence the inverse \Szego function $\Pi$ is well defined by \eqref{Pi_definition}. The solutions of the regularized Krein system satisfy the following limit relations. 
\begin{lemma}[Lemma 9, \cite{Bessonov2020}]\label{convergence of the regularized system}
For $z\in\Cm_+$, we have
\begin{gather*}
    \lim_{r\to\infty} \Tilde{P}_r^*(z) = \Pi(z),
    \qquad
    \lim_{r\to\infty} \Tilde{P}_r(z) = 0,\qquad
    \int_0^{\infty}|\Tilde{P}_r(z)|^2\, dr < \infty.
\end{gather*}
\end{lemma}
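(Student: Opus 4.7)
The plan is to prove the three assertions in the order $L^2$-integrability, pointwise decay, and finally the limit identification. The main tool is a Christoffel--Darboux-type identity for the regularized system, combined with bounds on the auxiliary coefficients $f_1, f_2$ stated in Lemma~\ref{coefficients bound}.

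First, I would subtract \eqref{diff equation for modulus of P} from \eqref{diff equation for modulus of P_*} to compute $(|\Tilde P_r^*(z)|^2 - |\Tilde P_r(z)|^2)'$. The $f_1$-cross-terms combine nicely: setting $u = f_1\Tilde P_r\ol{\Tilde P_r^*}$, one checks $2\Re[(z-i)u] - 2\Re[(z+i)\ol u] = -4\Im z\cdot \Im u$. Integrating from $0$ to $r$ yields an identity of the form
\[
|\Tilde P_r^*(z)|^2 - |\Tilde P_r(z)|^2 = |\Tilde P_0^*(z)|^2 + 2\Im z\int_0^r |\Tilde P_s(z)|^2\,ds + \mathcal{E}(r,z),
\]
where $\mathcal{E}(r,z)$ collects integrals of $f_1, f_2$ against $|\Tilde P_s^*|^2 + |\Tilde P_s|^2$. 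The $L^1$ bounds on $f_1, f_2$ from Lemma~\ref{coefficients bound} (which ultimately rest on the finiteness of $\K_H(0)$ in the \Szego class), together with a Gronwall-type argument applied to the sum $|\Tilde P_r^*|^2 + |\Tilde P_r|^2$, keep this sum uniformly bounded in $r$ for each fixed $z\in\Cm_+$. Since $\Im z > 0$, the identity above then yields claim $\int_0^{\infty}|\Tilde{P}_r(z)|^2\,dr < \infty$. For the decay $\Tilde P_r(z)\to 0$, by \eqref{diff equation for modulus of P} the derivative $(|\Tilde P_r|^2)'$ is dominated by $L^1$ functions (combining the uniform bound on $|\Tilde P_r^*|^2$ with the $L^1$ bounds on $f_1, f_2$), and a nonnegative $L^1$ function with $L^1$ derivative must tend to zero at infinity.

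The main obstacle will be identifying $\lim_{r\to\infty}\Tilde P_r^*(z) = \Pi(z)$, as opposed to some other constant multiple of $\Pi$. Once $\Tilde P_r(z)\to 0$, the right-hand side of \eqref{diff equation for tilde P_*} is integrable on $\R_+$, so $\Tilde P_r^*(z)$ converges to some analytic function $\Pi_*(z)$ on $\Cm_+$. To match $\Pi_*$ with the outer function $\Pi$ from \eqref{Pi_definition}, I would exploit the relation with the underlying Krein system of Section~\ref{section Krein systems}: by the Krein theorem and Lemma~\ref{lemma on condition (c) of Krein Theorem}, one has $P_*(r_n,z)\to e^{i\gamma}\Pi(z)$ along some subsequence for an unknown phase $\gamma$. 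The specific phase $u(r) = \int_0^r \cR'(t)/(2\cI(t))\,dt$ built into $f_1$ together with the normalization $\Tilde P_0^*(z) = \cI_H(0)^{-1}$ are engineered precisely so that the change of variables from $P_*$ to $\Tilde P_r^*$ cancels the factor $e^{i\gamma}$; combined with the outer-function condition $\Pi(i) > 0$ and the Herglotz representation of the Weyl function of the underlying Hamiltonian, this pins down the constant and yields $\Pi_*(z) = \Pi(z)$.
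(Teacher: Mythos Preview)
This lemma is not proved in the present paper; it is quoted as Lemma~9 of \cite{Bessonov2020}, where the regularized functions $\Tilde P_r^*,\Tilde P_r$ are introduced by an explicit construction (tied to the reproducing-kernel formula~(48) cited in Lemma~\ref{lemma two repr kernels}) rather than through the differential equations \eqref{diff equation for tilde P_*}--\eqref{diff equation for tilde P}. From that explicit description the convergence $\Tilde P_r^*\to\Pi$ is essentially built in; the present paper only checks afterwards that the two definitions agree.

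Your ODE-based route is therefore a genuinely different approach, but as written it has a gap. You assert that Lemma~\ref{coefficients bound} yields $f_1\in L^1(\R_+)$ and then run Gronwall on $|\Tilde P_r^*|^2+|\Tilde P_r|^2$. However, Lemma~\ref{coefficients bound} only gives $|f_1|\ls\sqrt{|\K'|}+|\K'|$. The \Szego hypothesis guarantees $\K'\in L^1(\R_+)$ (since $\K$ is nonincreasing with $\K(0)<\infty$), but it does \emph{not} force $\sqrt{|\K'|}\in L^1(\R_+)$: if $\K(r)\sim 1/r$ then $\sqrt{|\K'(r)|}\sim 1/r\notin L^1$. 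Without $f_1\in L^1$ your Gronwall step no longer produces a uniform bound on $|\Tilde P_r^*|^2+|\Tilde P_r|^2$, and the subsequent $L^2$-integrability and decay claims lose their input. Note that the paper imposes $\sqrt{|\K'|}\in L^1$ as an explicit \emph{additional} assumption in Lemma~\ref{entropy and absolute continuity}, precisely because it is not automatic in the \Szego class. In the Dirac/Krein setting one can sidestep the Gronwall bound via the identity in Lemma~\ref{lemma two repr kernels}, which gives $|\Tilde P_r^*(z)|^2-|\Tilde P_r(z)|^2=2\Im z\int_0^r|P(s,z)|^2\,ds\le|\Pi(z)|^2$ directly, but that is essentially returning to the construction of \cite{Bessonov2020} rather than arguing purely from the ODE.
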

\begin{lemma}[Lemma 8, \cite{Bessonov2020}]\label{coefficients bound}
The coefficient $f_1$ of the regularized Krein system satisfies 
\begin{gather*}
    |f_1(r)| \ls \sqrt{|\K'(r)|} + |\K'(r)|
\end{gather*}
uniformly for every $r\ge 0$.
\end{lemma}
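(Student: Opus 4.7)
The proof is a direct computation using the definition of $f_1$ together with an entropy-dissipation identity contained in \cite{Bessonov2020}. I would begin by observing that the phase factor $e^{2iu(r)}$ is unimodular, so
\begin{equation*}
    16|f_1(r)|^2 \;=\; \left(\frac{\cR'(r)}{\cI(r)}\right)^2 + \left(\frac{\cI'(r)}{\cI(r)}\right)^2,
\end{equation*}
and the task reduces to bounding the right-hand side by a constant multiple of $|\K'(r)|+|\K'(r)|^2$. The square root of such a bound, together with $\sqrt{a+b}\le\sqrt{a}+\sqrt{b}$, gives the desired estimate $|f_1(r)|\lesssim\sqrt{|\K'(r)|}+|\K'(r)|$.

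The key ingredient is the identity $\K'(r)=\cI'(r)/\cI(r)-\cJ'(r)$, immediate from $\K=\log\cI-\cJ$, together with an explicit formula expressing $\cJ'(r)$ in terms of the boundary value $H(r)$ of the shifted Hamiltonian. Under the shift $H\mapsto H_r$ the Weyl function $m_r$ satisfies a first-order evolution equation whose coefficients are the entries of $H(r)$; differentiating the Herglotz representation of $m_r$ at $z=i$ produces $\cI'(r)$ and $\cR'(r)$ as explicit quadratic expressions in these entries, while differentiating $\cJ(r)=\tfrac{1}{\pi}\int_{\R}\log w_r(x)/(1+x^2)\,dx$ under the integral sign and using the known formula for the evolution of $w_r$ writes $\cJ'(r)$ as a non-negative quadratic combination of the same entries. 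Comparison of these two expressions yields a bound of the schematic form
\begin{equation*}
    -\K'(r) \;\gtrsim\; \left(\frac{\cR'(r)}{\cI(r)}\right)^2 + \left(\frac{\cI'(r)}{\cI(r)}\right)^2 \;-\; C\,|\K'(r)|^2,
\end{equation*}
which rearranges to the required estimate on $|f_1(r)|^2$.

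The main obstacle is arriving at the quantitative form of the entropy-dissipation identity: the qualitative content ($|f_1|^2$ is controlled by $-\K'$) is not hard, but tracking the remainder so as to recognise the quadratic correction $|\K'|^2$ — rather than something weaker like $o(|\K'|)$ — requires careful bookkeeping through the phase change of variables $u(r)=\int_0^r \cR'(t)/(2\cI(t))\,dt$ and the algebraic simplifications behind the derivation of the regularized Krein system. I would therefore follow \cite[Lemma 8]{Bessonov2020} essentially verbatim, simply checking that the constants behave as advertised. The final cosmetic split into the regimes $|\K'(r)|\le 1$, where the $\sqrt{|\K'(r)|}$ term dominates, and $|\K'(r)|>1$, where the linear $|\K'(r)|$ term dominates, delivers the additive form in the statement.
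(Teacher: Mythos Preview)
Your proposal is correct in spirit and aims at the same target as the paper, but the actual mechanism differs in one concrete respect worth flagging. You aim for a single schematic inequality
\[
    -\K'(r)\;\gtrsim\;\left(\frac{\cR'}{\cI}\right)^2+\left(\frac{\cI'}{\cI}\right)^2 - C\,|\K'|^2,
\]
obtained by comparing the evolution formulas for $\cI',\cR',\cJ'$. The paper does not establish this inequality directly. Instead it quotes two explicit identities from \cite{Bessonov2020} involving the auxiliary scalar $\cI h_1$ (with $h_1$ the top-left Hamiltonian entry):
\[
    -\K' \;=\; \Bigl(\cI h_1+\tfrac{1}{\cI h_1}-2\Bigr)+\tfrac{1}{4}\Bigl(\tfrac{\cR'}{\cI}\Bigr)^2\tfrac{1}{\cI h_1},
    \qquad
    \frac{\cI'}{\cI}\;=\;\cI h_1-\tfrac{1}{\cI h_1}-\tfrac{1}{4}\Bigl(\tfrac{\cR'}{\cI}\Bigr)^2\tfrac{1}{\cI h_1},
\]
and then splits into two regimes: if $\cI h_1\in[1/2,2]$ one gets $(\cR'/\cI)^2\lesssim|\K'|$ and $|\cI'/\cI|\lesssim\sqrt{|\K'|}$; if $\cI h_1\notin[1/2,2]$ one gets $\cI h_1\lesssim|\K'|$, hence $(\cR'/\cI)^2\lesssim|\K'|^2$ and $|\cI'/\cI|\lesssim|\K'|$. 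The two cases together give exactly $|f_1|\lesssim\sqrt{|\K'|}+|\K'|$. Your schematic bound is recoverable from this dichotomy, but the quantity $\cI h_1$ and the case split on it are the actual engine of the argument; the ``careful bookkeeping through the phase change'' you anticipate is in fact replaced by this elementary two-case analysis. Since you explicitly plan to follow \cite[Lemma~8]{Bessonov2020} verbatim, this is a difference of presentation rather than a gap.
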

\begin{proof}
Due to the definition of $f_1$, the claim of the lemma is equivalent to 
\begin{gather*}
    \left|\frac{\cI'(r)}{\cI(r)}\right| + \left|\frac{\cR'(r)}{\cI(r)}\right|\ls \sqrt{|\K'(r)|} + |\K'(r)|.
\end{gather*}
Formulas (39) and (40) in \cite{Bessonov2020} give
\begin{align}
    \label{derivative of K' from the paper}
    -\K' &= \left(\cI h_1 + \frac{1}{\cI h_1} - 2\right) + \frac{1}{4}\left(\frac{\cR'}{I}\right)^2\frac{1}{\cI h_1},
    \\
    \label{derivative of I'/I from the paper}
    \frac{\cI'}{\cI} &= \cI h_1 - \frac{1}{\cI h_1} - \frac{1}{4}\left(\frac{\cR'}{\cI}\right)^2\frac{1}{\cI h_1},
\end{align}
where $h_1$ is the upper-left entry of $H$, see \eqref{canonical system general form of the hamiltonian}. The function $\K$ is non-increasing hence $-\K =  |\K'|$ for every $t\ge 0$.  
Two terms in the right hand side of the first equality are nonnegative and therefore we have
\begin{gather}
    \label{rate of convergence for entropy formula 1}
    \cI h_1 + \frac{1}{\cI h_1} - 2\le  |\K'|,
    \\
    \label{rate of convergence for entropy formula 2}
    \frac{1}{4}\left(\frac{\cR'}{\cI}\right)^2\frac{1}{\cI h_1} \le |\K'|,\qquad \left(\frac{\cR'}{\cI}\right)^2\le 4\cI h_1|\K'|.
\end{gather}
From the definition of $\cI(r) $ we know $\cI(r) = \Im m_r(i) \ge 0$ because $m_r$ is a Herglotz function. Also overall assumptions on $H$ imply $h_1\ge 0 $. If $\cI(r)h_1(r)\in\left[\frac{1}{2}, 2\right]$ then by \eqref{rate of convergence for entropy formula 2} we have $\displaystyle \left( {\cR'}/{\cI}\right)^2\ls |\K'|$ and 
\begin{gather}\label{bound for diff Ih case 1}
    \left|\cI h_1 - \frac{1}{\cI h_1}\right|\ls  \sqrt{\cI h_1 + \frac{1}{\cI h_1} - 2}\le \sqrt{|\K'|};
\end{gather}
else we have 
\begin{gather}\label{bound for diff Ih case 2}
    Ih_1 \ls \left|\frac{1}{\cI h_1} -  \cI h_1\right|\approx \cI h_1 + \frac{1}{\cI h_1} - 2 \le |\K'|
\end{gather}
and $\displaystyle \left({\cR'}/{\cI}\right)^2\le 4\cI h_1|\K'| \ls |\K'|^2$. The required inequalities for ${\cR'}/{\cI}$ immediately follow in both situations. To get the bound for $ {\cI'(r)}/{\cI(r)}$ substitute \eqref{bound for diff Ih case 1}, \eqref{bound for diff Ih case 2} and \eqref{rate of convergence for entropy formula 2} into \eqref{derivative of I'/I from the paper}.
\end{proof}
Consider Krein system \eqref{Krein system} and construct a canonical system via the reductions from Sections \ref{section Connections with the Dirac operator} and \ref{Section reduction of the Dirac operator to the canonical system} so that the spectral measure of the Krein system coincides with the spectral measure of the canonical system. The next lemma connects the Krein system and its regularized version.
\begin{lemma}\label{lemma two repr kernels}
For $z_0\in\Cm_+$ and $ r\ge 0$ we have
\begin{gather}\label{formula from lemma with two reproducing kernels}
    2\Im z_0\int_{r}^{\infty} |P(x,z_0)|^2\,dx = |\Pi(z_0)|^2 -  \left(|\Tilde P_{r}^*(z_0)|^2 - |\Tilde P_{r}(z_0)|^2\right).
\end{gather}
\end{lemma}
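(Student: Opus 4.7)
The plan is to reduce the identity to a simpler matching statement and then verify it by an ODE analysis, matching derivatives and the common boundary value at $r=\infty$.

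First I would rewrite the left-hand side using the Christoffel--Darboux identity \eqref{second CD formula} combined with \eqref{expression for the szego function}:
\[
2\Im z_0 \int_r^\infty |P(x,z_0)|^2\,dx = |\Pi(z_0)|^2 - \bigl(|P_*(r,z_0)|^2 - |P(r,z_0)|^2\bigr).
\]
Comparing with \eqref{formula from lemma with two reproducing kernels}, the lemma becomes equivalent to the matching identity
\[
|P_*(r,z_0)|^2 - |P(r,z_0)|^2 = |\Tilde P_r^*(z_0)|^2 - |\Tilde P_r(z_0)|^2.
\]
Both sides share the common limit $|\Pi(z_0)|^2$ as $r\to\infty$: the left-hand side by \eqref{expression for the szego function}, and the right-hand side by Lemma~\ref{convergence of the regularized system}.

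It therefore suffices to show that both sides satisfy the same differential equation in $r$. The Christoffel--Darboux identity gives $\frac{d}{dr}(|P_*|^2 - |P|^2) = 2\Im z_0\,|P(r,z_0)|^2$. For the right-hand side, applying \eqref{diff equation for modulus of P_*} and \eqref{diff equation for modulus of P} and using that $f_2$ is real-valued (since $\K$ is real), a direct computation yields
\[
\frac{d}{dr}\bigl(|\Tilde P_r^*|^2 - |\Tilde P_r|^2\bigr) = 2\Im z_0 \bigl[|\Tilde P_r|^2 - 2\Im\bigl(f_1 \Tilde P_r \overline{\Tilde P_r^*}\bigr) - f_2\bigl(|\Tilde P_r^*|^2 + |\Tilde P_r|^2\bigr)\bigr].
\]
The proof thus reduces to verifying the pointwise identity
\[
|P(r,z_0)|^2 = |\Tilde P_r(z_0)|^2 - 2\Im\bigl(f_1(r) \Tilde P_r(z_0) \overline{\Tilde P_r^*(z_0)}\bigr) - f_2(r)\bigl(|\Tilde P_r^*(z_0)|^2 + |\Tilde P_r(z_0)|^2\bigr),
\]
which expresses the squared modulus of the Krein solution in terms of regularized-Krein quantities.

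The main obstacle will be this final algebraic identity. The original Krein system and its regularized version are both built from the same spectral measure, but through different routes, so the transformation between them must be tracked explicitly. The key ingredients should be the definitions of $f_1$ and $f_2$ in terms of $\cI$, $\cR$, and $\K$ of the Hamiltonian $H$, together with the Krein--Dirac reduction of Section~\ref{section Connections with the Dirac operator}. Once the identity is in hand, the matching of derivatives together with the shared limit at infinity forces the two sides to coincide for all $r\ge 0$, which gives~\eqref{formula from lemma with two reproducing kernels}.
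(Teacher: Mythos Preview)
Your reduction is correct: by \eqref{second CD formula} and \eqref{expression for the szego function} the lemma is equivalent to
\[
|P_*(r,z_0)|^2-|P(r,z_0)|^2 \;=\; |\Tilde P_r^*(z_0)|^2-|\Tilde P_r(z_0)|^2,
\]
and your computation of the $r$--derivatives on both sides is accurate. The problem is what comes next. You reduce everything to the pointwise identity
\[
|P(r,z_0)|^2 = |\Tilde P_r(z_0)|^2 - 2\Im\bigl(f_1\Tilde P_r\overline{\Tilde P_r^*}\bigr) - f_2\bigl(|\Tilde P_r^*|^2+|\Tilde P_r|^2\bigr),
\]
and then stop. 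This is not a minor cleanup step: the left side lives in the Krein system (built from the coefficient $a$), the right side lives in the regularized system (built from $\cI,\cR,\K$ of the Hamiltonian $H$), and the only link the paper provides between these two objects is spectral, not an explicit change of variables. There is no formula in the paper expressing $P(r,\cdot)$ in terms of $\Tilde P_r,\Tilde P_r^*$, so ``tracking the transformation explicitly'' is not something you can actually do with the tools at hand. Your proposal has therefore replaced the original identity by one that is at least as hard, and left it open.

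The paper bypasses this entirely. It observes that $k_r(z_0,z_0)$, the diagonal of the reproducing kernel of $PW_{[0,r]}$ in $L^2(\sigma)$, can be computed in two ways: via \eqref{formula for reproducing kernel via Krein system} as $\frac{1}{2\pi}\int_0^r|P(x,z_0)|^2\,dx$, and via formula~(48) of \cite{Bessonov2020} as $\bigl(|\Tilde P_r^*(z_0)|^2-|\Tilde P_r(z_0)|^2\bigr)/(4\pi\Im z_0)$. Equating these and invoking \eqref{expression for the szego function} gives the lemma in two lines. The key input you are missing is precisely this second reproducing--kernel formula from \cite{Bessonov2020}; once you have it, the derivative matching becomes unnecessary, and without it your argument cannot close.
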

\begin{proof}
Let $\sigma$ be the spectral measure of the Krein system. Recall that the reproducing kernel in the space $PW_{[0,r]}$ with norm inherited from $L^2(d\sigma)$ at the point $z_0\in \Cm_+$ is given by \eqref{formula for reproducing kernel via Krein system},
\begin{gather*}
    k_r(z_0, z) = \frac{1}{2\pi}\int_0^r P(x, z)\ol{P(x,z_0)}\,dx.
\end{gather*}
On the other hand, the reproducing kernel admits the following representation in terms of the regularized Krein system (see formula (48) in \cite{Bessonov2020}):
\begin{gather*}
    k_r(z_0, z) = -\frac{1}{2\pi i}\frac{\Tilde P_r^*(z)\ol{\Tilde P_r^*(z_0)} - \Tilde P_r(z)\ol{\Tilde P_r(z_0)}}{z  - \ol{z_0}}.
\end{gather*}
Therefore we have
\begin{gather*}
    k_r(z_0, z_0) = \frac{|\Tilde P_r^*(z_0)|^2 - |\Tilde P_r(z_0)|^2}{4\pi\Im z_0} = \frac{1}{2\pi}\int_0^r |P(x, z_0)|^2\,dx.
\end{gather*}
The claim of the lemma now follows from \eqref{expression for the szego function}.
\end{proof}
\section{Analytic extension of the \Szego function. Proof of Theorem \ref{Entropy for extension}}\label{section proof of Theorem 1}
Consider canonical system \eqref{hamiltonian canonical system} with spectral measure $\sigma$ in the \Szego class. Let $\K$ be its entropy function \eqref{definition entropy function of the measure}, $\Pi$ be the inverse \Szego function of $\sigma$ and $\Tilde{P}_{r}, \Tilde{P}_{r}^*$ be the solutions of the corresponding regularized Krein system. Before the proof of Theorem \ref{Entropy for extension} let us show that the singular part of $\sigma$ is absent under the weaker assumptions on $\K$.
\begin{lemma}\label{entropy and absolute continuity}
If $\sqrt{|\K'|}\in L^1(\R_+)$ then $\sigma$ is a.\,c.\,with respect to the Lebesgue measure on the real line and $\Pi$ is continuous in $\ol{\Cm_+}$.  Furthermore, uniformly for $z\in \ol{\Cm_+}$ we have
\begin{align}\label{exponential bound}
    \left|\log|\Tilde{P}_{r}^*(z)|\right| \ls |z| + 1,
    \\
    \label{exponential bound for pi}
    \big|\log|\Pi(z)|\big| \ls |z| + 1.
\end{align}
\end{lemma}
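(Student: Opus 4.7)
The plan is to establish the exponential bound \eqref{exponential bound} on $|\Tilde P_r^*(z)|$ uniformly in $r\ge 0$ and $z\in\ol{\Cm_+}$ by means of a two-sided estimate, then to pass to the limit $r\to\infty$ to obtain continuity of $\Pi$ on $\ol{\Cm_+}$ together with \eqref{exponential bound for pi}, and finally to deduce absolute continuity of $\sigma$ from the resulting regularity of the Weyl function.

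The first step is to verify $f_1,f_2\in L^1(\R_+)$. Since $\K$ is non-increasing by \eqref{derivative of K' from the paper} and bounded below, $|\K'|=-\K'\in L^1(\R_+)$, so Lemma \ref{coefficients bound} combined with the hypothesis $\sqrt{|\K'|}\in L^1(\R_+)$ yields $f_1\in L^1(\R_+)$; the integrability of $f_2=\K'/4$ is immediate. For the upper bound on $|\Tilde P_r^*|$ I would add \eqref{diff equation for modulus of P_*} and \eqref{diff equation for modulus of P}, drop the two terms with favorable signs for $z\in\ol{\Cm_+}$ (namely $-2\Im z\,|\Tilde P_r|^2$ and $2\Im z\, f_2\,|\Tilde P_r|^2$, both non-positive), and use $|z\pm i|\le|z|+1$ together with $2|ab|\le a^2+b^2$ to obtain
\[
\frac{d}{dr}\bigl(|\Tilde P_r^*(z)|^2+|\Tilde P_r(z)|^2\bigr)\ls\bigl((|z|+1)|f_1(r)|+|z||f_2(r)|\bigr)\bigl(|\Tilde P_r^*(z)|^2+|\Tilde P_r(z)|^2\bigr).
\]
Grönwall's lemma with initial value $|\Tilde P_0^*(z)|^2=\cI(0)^{-2}$ and the $L^1$ control of $f_1,f_2$ yields $|\Tilde P_r^*(z)|^2+|\Tilde P_r(z)|^2\ls e^{C(|z|+1)}$ uniformly. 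For the matching lower bound, Lemma \ref{lemma two repr kernels} gives that $|\Tilde P_r^*(z)|^2-|\Tilde P_r(z)|^2=4\pi\Im z\cdot k_r(z,z)$ is non-decreasing in $r$ on $\Cm_+$ with value $\cI(0)^{-2}$ at $r=0$, so $|\Tilde P_r^*(z)|\ge\cI(0)^{-1}$ on $\Cm_+$, hence on $\ol{\Cm_+}$ once continuity is established.

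To pass to the limit, I would integrate \eqref{diff equation for tilde P_*} from $r$ to $R$; the upper bound on $|\Tilde P_s|,|\Tilde P_s^*|$ on compact subsets of $\ol{\Cm_+}$ combined with $f_1,f_2\in L^1(\R_+)$ makes $\{\Tilde P_r^*\}_r$ uniformly Cauchy on such compacts. The limit is therefore continuous on $\ol{\Cm_+}$ and, by Lemma \ref{convergence of the regularized system}, coincides with $\Pi$ in $\Cm_+$; passing to the limit in the bounds yields \eqref{exponential bound} and \eqref{exponential bound for pi}. The identical Cauchy argument applied to \eqref{diff equation for tilde P} shows that $\Tilde P_r$ converges uniformly on compacts of $\ol{\Cm_+}$ to its continuous (hence zero) extension.

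The absolute continuity of $\sigma$ is the most delicate part. Continuity of $\Pi$ up to $\R$ with $|\Pi(x)|\ge\cI(0)^{-1}$ already makes the a.c.\,density $w=|\Pi|^{-2}$ continuous and bounded on $\R$; it remains to exclude a singular part. I expect to do this by expressing the Weyl function $m_H$ of the canonical system as a ratio built from $\Tilde P_r$ and $\Tilde P_r^*$—tracing through the reduction of the canonical system to the regularized Krein system should give such a formula in the limit $r\to\infty$, with the denominator controlled from below by $|\Pi|$. The uniform convergence on compacts of $\ol{\Cm_+}$ then forces $m_H$ to extend continuously to $\ol{\Cm_+}$ with finite imaginary part, and the Herglotz representation \eqref{Herglotz representation} compels $\sigma_s=0$. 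Producing this explicit expression for $m_H$ in terms of the regularized Krein system is the main technical obstacle; everything else is a fairly routine combination of the differential estimates, the reproducing-kernel identity, and Grönwall's inequality.
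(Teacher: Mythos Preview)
Your argument for the two-sided bound \eqref{exponential bound} and for the continuous extension of $\Pi$ is correct but more elaborate than the paper's. The paper does not split into separate upper and lower estimates: it simply observes that $|\Tilde P_r(z)|\le|\Tilde P_r^*(z)|$ for $z\in\ol{\Cm_+}$ (this is the non-negativity of the reproducing kernel, equivalently $|\Tilde P_r^*|^2-|\Tilde P_r|^2\ge 0$, which you also use), divides \eqref{diff equation for tilde P_*} by $\Tilde P_r^*(z)$, and bounds the logarithmic derivative
\[
\left|\frac{\partial}{\partial r}\log\Tilde P_r^*(z)\right|\le (|z|+1)\bigl(|f_1(r)|+|f_2(r)|\bigr)\ls (|z|+1)\bigl(\sqrt{|\K'(r/2)|}+|\K'(r/2)|\bigr).
\]
Integrating once gives \eqref{exponential bound} with both signs simultaneously, as well as the uniform-on-compacts Cauchy property you obtain by a separate Grönwall argument. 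Your detour through $|\Tilde P_r^*|^2+|\Tilde P_r|^2$ and the monotonicity of the difference is sound, but the single logarithmic-derivative estimate replaces both steps.

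The genuine gap is in the absolute continuity of $\sigma$. You propose to produce a formula for $m_H$ as a ratio built from limits of $\Tilde P_r,\Tilde P_r^*$ and to show it extends continuously to $\R$; you yourself flag this as unfinished, and indeed no such formula is derived in the paper. The paper takes a different and shorter route: it invokes the fact (from \cite{Bessonov2020}) that $|\Tilde P_r^*(x)|^{-2}\,dx$ is the spectral measure of an approximating Hamiltonian $\hat H_r$, and that the general theory of canonical systems (Corollary~5.8 and Theorem~7.3 in \cite{Remling2018}) gives $|\Tilde P_r^*(x)|^{-2}\,dx\xrightarrow{w^*} d\sigma$. The uniform lower bound on $|\Tilde P_r^*(x)|$ from \eqref{exponential bound} then makes the densities locally uniformly bounded, so the weak-$*$ limit is absolutely continuous. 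This avoids any need to identify $m_H$ explicitly.
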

\begin{proof}
Recall differential equation \eqref{diff equation for tilde P_*} and divide it by $\Tilde{P}_r^*(z)$:
\begin{gather*}
    \frac{\d}{\d r}\log\Tilde{P}_r^*(z) = (z - i)f_1(r)\frac{\Tilde{P}_r(z)}{\Tilde{P}_r^*(z)} + izf_2(r).
\end{gather*}
The inequality $|\Tilde{P}_r(z)|\le |\Tilde{P}_r^*(z)|$ holds for $z\in \ol{\Cm_+}$, therefore 
\begin{align*}
    \left|\frac{\d}{\d r}\log\Tilde{P}_r^*(z)\right| &\le \left|(z - i)f_1(r) + izf_2(r)\right| \ls  (|z| + 1)\left(\sqrt{|\K'(r/2)|} + |\K'(r/2)|\right),
\end{align*}
where the last inequality is by Lemma \ref{coefficients bound}. Inequality \eqref{exponential bound} then follows by integration. We see that $\frac{\d}{\d r}\log\Tilde{P}_r^*(z)\in L_1(\R_+)$ and $\norm[\frac{\d}{\d r}\log\Tilde{P}_r^*(z)]_1$ is uniformly bounded on compact subsets of $\ol{\Cm_+}$. This means that $\Tilde{P}_r^*(z)$ converge as $r\to\infty$ uniformly on compact subsets of $\ol{\Cm_+}$. By Lemma \ref{convergence of the regularized system}, the limit coincides with $\Pi$ in $\Cm_+$ therefore $\Pi$ is continuous in $\ol{\Cm_+}$ and \eqref{exponential bound for pi} follows from \eqref{exponential bound}.
Finally, $|\tilde P_r^*(x)|^{-2} dx$ is a spectral measure of the Hamiltonian $\hat{H}_r$ from \cite{Bessonov2020} hence Corollary 5.8 and Theorem 7.3 in \cite{Remling2018} give ${|\Tilde{P}_r^*(x)|^{-2}}dx \stackrel{w^*}{\to} d\sigma$.
 Together with \eqref{exponential bound} this means that $\sigma$ is absolutely continuous. The proof is concluded.
\end{proof}
\subsection{Proof of Theorem \ref{Entropy for extension}}\label{short proof of the first theorem}
\begin{proof}
The Hamiltonian $H = H_Q$ defined by \eqref{hamiltonian for dirac} coincides with the Hamiltonian constructed in Section \ref{Section reduction of the Dirac operator to the canonical system}. At the end of Section \ref{section Entropy of Canonical systems} we showed that the assertion
\begin{gather*}
    E_Q(r) = \det\int_{r}^{r + 2}  H_Q(t)\,dt - 4 = O\left(e^{-\delta r}\right),\quad r\to\infty,
\end{gather*}
of Theorem \ref{Entropy for extension} is equivalent to the assertion
\begin{gather*}
    \bK_{H}(r) = \sum_{n\ge 0}\left(\det\int_{r+ n}^{r+ n + 2}  H_Q(t)\,dt - 4\right) = O\left(e^{-\delta r}\right),\quad r\to\infty.
\end{gather*}
By Theorem \ref{known theorem on two entropies}, it follows that $\sigma$ is in the \Szego class and we have
\begin{gather}
    \label{decay of the entropy function if theorem 1}
    \K_H(r) = O\left(e^{-\delta r}\right),\quad r\to\infty.
\end{gather}
In particular, $\sqrt{|\K'|}\in L_1$. Hence part \eqref{entropy for extension part 1} of Theorem \ref{Entropy for extension} immediately follows from Lemma \ref{entropy and absolute continuity}.
\medskip 

Let us show that $\Tilde{P}_r^*(z)$ converges as $r\to\infty$ uniformly on compact subsets of $\Omega_{\delta/4}$. By Lemma \ref{convergence of the regularized system}, the limit function will be the required continuation of $\Pi$. Let $z$ be with $\Im z \le 0$. Recall differential equation \eqref{diff equation for tilde P_*} and apply the bounds from Lemma \ref{coefficients bound} to it:
\begin{gather*}
    \left|\frac{\d}{\d r}\Tilde{P}_r^*(z)\right| \ls  (|z| + 1)\left(\sqrt{|\K'(r/2)|} + |\K'(r/2)|\right)\left|\Tilde{P}_r(z)\right| + |z||\K'(r/2)|\left| \Tilde{P}_r^*(z)\right|.
\end{gather*}
By \eqref{exponential bound} and reflection formula  \eqref{inversion formula}, we get
\begin{gather*}
    \left|\Tilde P_r^*(z)\right| \le e^{r|\Im z|}e^{c(|z| + 1)},\quad \left|\Tilde P_r(z)\right| \le e^{r|\Im z|}e^{c(|z| + 1)}, \quad \Im z \le 0,
\end{gather*}
where $c$ does not depend on $z$.
Substituting these bounds into the previous inequality, we obtain
\begin{gather}
\label{trivial inequality for the differential equation}
    \left|\frac{\d}{\d r}\Tilde{P}_r^*(z)\right| \ls (|z| + 1)\left(\sqrt{|\K'(r/2)|} + |\K'(r/2)|\right)e^{r|\Im z|}e^{c(|z| + 1)},\quad \Im z \le 0.
\end{gather}
Because of \eqref{decay of the entropy function if theorem 1}, the integral
\begin{gather}
    \label{convergence of entropy with the increasing exponent}
    \int_0^{\infty}\left(\sqrt{|\K'(r/2)|} + |\K'(r/2)|\right)e^{r|\Im z|}\, dr
\end{gather}
converges when $|\Im z| < \delta/4$. Therefore $\frac{\d}{\d r}\Tilde{P}_r^*(z)\in L^1$ for $z$ with  $0 \ge \Im z > -\delta/4$ and Part \eqref{entropy for extension part 2} of Theorem \ref{Entropy for extension} follows.
\medskip

Simple calculations show (see Lemma 3 in \cite{Bessonov2020}) that for every $t\ge 0$  we have
\begin{gather*}
    H_{-Q}(t) = J^*H_Q(t) J,\qquad \bK_{H_{-Q}}(t) = \bK_{H_Q}(t),\qquad E_{-Q}(t) = E_{Q}(t).
\end{gather*}
Hence $-Q$ also satisfies the assertions of the theorem and Part \eqref{entropy for extension part 2} can be applied for $-Q$ as well as for $Q$. It follows that both $\Pi_{Q}$ and $\Pi_{-Q}$ extend analytically into $\Omega_{\delta/4}$ and therefore the Weyl function of $D_Q$ can be meromophically extended into the same domain via relation \eqref{Weyl function via the Szego function}. This concludes the proof of the whole theorem.
\end{proof}
\begin{cor}\label{corollary bound for extended function Pi}
Assume that the assertions on Theorem \ref{Entropy for extension} hold. Then for every $\delta_1 < \delta/4$ there exists a constant $C$ such that 
$|\Pi(z)|\le e^{C(|z| + 1)}$ in $\Omega_{\delta_1}$.
\end{cor}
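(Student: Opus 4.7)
The plan is to combine the bound already established in the upper half-plane with a direct integration of the differential equation for $\tilde P_r^*$ in the lower strip, reusing almost verbatim the estimates from the proof of Theorem \ref{Entropy for extension}.

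First, for $z\in\overline{\Cm_+}$ the bound is immediate from Lemma \ref{entropy and absolute continuity}: inequality \eqref{exponential bound for pi} gives $|\log|\Pi(z)||\ls|z|+1$, which already has the form claimed, so it only remains to handle $z$ with $-\delta_1\le\Im z<0$.

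For such $z$, I would write
\[
\Pi(z) \;=\; \tilde P_0^*(z)+\int_0^\infty \frac{\d}{\d r}\tilde P_r^*(z)\,dr,
\]
which is justified by the uniform convergence $\tilde P_r^*(z)\to\Pi(z)$ on compact subsets of $\Omega_{\delta/4}$ established in the proof of Theorem \ref{Entropy for extension}. The initial value $\tilde P_0^*(z)=\cI_H(0)^{-1}$ is a constant, so it contributes only $O(1)$. To bound the integrand I would use inequality \eqref{trivial inequality for the differential equation},
\[
\left|\frac{\d}{\d r}\tilde P_r^*(z)\right|\ls (|z|+1)\bigl(\sqrt{|\K'(r/2)|}+|\K'(r/2)|\bigr)\,e^{r|\Im z|}\,e^{c(|z|+1)},
\]
which is valid in the closed lower half-plane. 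Integrating in $r$ and using that, for $|\Im z|\le\delta_1<\delta/4$, the integral $\int_0^\infty(\sqrt{|\K'(r/2)|}+|\K'(r/2)|)e^{r|\Im z|}\,dr$ is finite and bounded by a constant $M(\delta_1)$ (this is exactly the convergence of \eqref{convergence of entropy with the increasing exponent}, uniform in $|\Im z|\le\delta_1$), I obtain
\[
|\Pi(z)|\;\le\;|\tilde P_0^*(z)|+c'(|z|+1)\,e^{c(|z|+1)}\,M(\delta_1).
\]
Since $(|z|+1)$ is absorbed into the exponent by enlarging the constant, this gives $|\Pi(z)|\le e^{C(|z|+1)}$ with $C$ depending on $\delta_1$ but not on $z$, completing the argument in the strip.

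There is no serious obstacle: the estimate is essentially a byproduct of the construction of the extension in Theorem \ref{Entropy for extension}. The only point requiring care is to ensure the constant $M(\delta_1)$ from the entropy integral stays bounded as $|\Im z|$ ranges over $[0,\delta_1]$, which follows from the strict inequality $\delta_1<\delta/4$ together with the decay \eqref{decay of the entropy function if theorem 1}; one can, for example, fix any $\delta_2\in(\delta_1,\delta/4)$ and note that $e^{r|\Im z|}\le e^{r\delta_1}\le e^{r\delta_2}$ uniformly on the strip.
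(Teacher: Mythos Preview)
Your proof is correct and follows essentially the same route as the paper: both split into $z\in\overline{\Cm_+}$ (handled by \eqref{exponential bound for pi}) and $z$ in the lower strip (handled by integrating \eqref{trivial inequality for the differential equation} against the convergent entropy integral \eqref{convergence of entropy with the increasing exponent}). The paper's proof is simply a one-line pointer to those two inequalities, while you have spelled out the integration step explicitly.
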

\begin{proof}
The required inequality for $z\in\Cm_+$ and $z\in \Omega_{\delta_1}\setminus \Cm_+$ follows from \eqref{exponential bound for pi} and \eqref{trivial inequality for the differential equation} respectively.
\end{proof}
Let $P$ be the solution of the Krein system corresponding to $D_Q$. The next corollary is a quantitative version of Lemmas \ref{convergence of the regularized system} and \ref{lemma two repr kernels}.
\begin{cor}\label{corollary which was step 2}
Fix $z_0$ with $\Im z_0 > \delta / 4$. 
Under the assumptions of Theorem \ref{Entropy for extension}, for $r\ge 0$  we have
\begin{gather*}
    |\Tilde{P}_r(z_0)| \ls e^{-\delta r/4},\qquad \left|\Tilde P^*_{r}(z_0) - \Pi(z_0)\right| \ls e^{-\delta r/2},
    \\
    \int_{r}^{\infty} |P(x,z_0)|^2\,dx \ls e^{-\delta r/2}.
\end{gather*}
\end{cor}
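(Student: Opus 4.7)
The plan is to establish the three estimates sequentially and entirely by direct ODE analysis: the first via variation of parameters on differential equation \eqref{diff equation for tilde P}, the second by integrating \eqref{diff equation for tilde P_*} from $r$ to $\infty$ using the first estimate, and the third by reading Lemma \ref{lemma two repr kernels} in light of the other two. For the first estimate I rewrite \eqref{diff equation for tilde P} as $\frac{d}{dr}\tilde P_r(z_0) = iz_0(1-f_2(r))\tilde P_r(z_0) + (z_0+i)\overline{f_1(r)}\tilde P_r^*(z_0)$, using that $f_2(r)=\K'(r/2)/4$ is real, with integrating factor $\exp(-iz_0(r-\int_0^r f_2))$ of modulus $e^{-r\Im z_0}$ up to a bounded factor. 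Together with $\tilde P_0(z_0)=0$ and the uniform bound $|\tilde P_s^*(z_0)|\ls 1$ from Lemma \ref{convergence of the regularized system}, integration yields
\begin{gather*}
    |\tilde P_r(z_0)| \ls (|z_0|+1)\,e^{-r\Im z_0}\int_0^r e^{s\Im z_0}|f_1(s)|\,ds.
\end{gather*}
Factoring $e^{s\Im z_0}=e^{s\delta/4}\cdot e^{s(\Im z_0-\delta/4)}$ and using $\Im z_0>\delta/4$ to pull the monotone factor $e^{r(\Im z_0-\delta/4)}$ out of the integral reduces matters to showing $\int_0^\infty|f_1(s)|e^{s\delta/4}\,ds<\infty$. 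This is the key technical step: by Lemma \ref{coefficients bound} we have $|f_1(s)|\ls\sqrt{|\K'(s)|}+|\K'(s)|$, and $\K(r)=O(e^{-\delta r})$ follows from our hypothesis combined with Theorem \ref{known theorem on two entropies}; integration by parts handles $\int|\K'|e^{\alpha s}\,ds$ for any $\alpha<\delta$, and Cauchy-Schwarz with weight $e^{s\alpha/2}$ for some $\alpha\in(\delta/2,\delta)$ handles the $\sqrt{|\K'(s)|}$ term.

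For the second estimate, since $\tilde P_s^*(z_0)\to\Pi(z_0)$ by Lemma \ref{convergence of the regularized system}, the identity $\tilde P_r^*(z_0)-\Pi(z_0)=-\int_r^\infty(\tilde P_s^*)'(z_0)\,ds$ combined with \eqref{diff equation for tilde P_*} gives
\begin{gather*}
    |\tilde P_r^*(z_0)-\Pi(z_0)| \ls (|z_0|+1)\int_r^\infty|f_1(s)||\tilde P_s(z_0)|\,ds + |z_0|\int_r^\infty|f_2(s)|\,ds.
\end{gather*}
The $f_2$-term equals $\frac{1}{2}\K(r/2)\ls e^{-\delta r/2}$ since $\K$ is monotone. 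The $f_1$-term, after substituting the first estimate $|\tilde P_s(z_0)|\ls e^{-s\delta/4}$, reduces to $\int_r^\infty|f_1(s)|e^{-s\delta/4}\,ds$; the square-root piece is estimated by Cauchy-Schwarz as $\int_r^\infty\sqrt{|\K'(s)|}\,e^{-s\delta/4}\,ds\le\K(r)^{1/2}\cdot(2 e^{-\delta r/2}/\delta)^{1/2}\ls e^{-3\delta r/4}$, comfortably below the required rate. The third estimate is then immediate from Lemma \ref{lemma two repr kernels}: writing $|\Pi(z_0)|^2-|\tilde P_r^*(z_0)|^2=(|\Pi(z_0)|+|\tilde P_r^*(z_0)|)(|\Pi(z_0)|-|\tilde P_r^*(z_0)|)$ and combining with $|\tilde P_r(z_0)|^2$, both pieces are $O(e^{-\delta r/2})$ by the preceding steps, and one divides by $2\Im z_0$.

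The main obstacle is the weighted integrability $\int_0^\infty|f_1(s)|e^{s\delta/4}\,ds<\infty$ in the first step. The square-root in Lemma \ref{coefficients bound} is delicate because $\K'(s)$ need not decay pointwise at any uniform rate---only the tail $\int_r^\infty|\K'(s)|\,ds=\K(r)$ does---so one cannot naively estimate $\sqrt{|\K'(s)|}$ by $e^{-\delta s/2}$. The Cauchy-Schwarz split described above, which trades half of the exponential budget to stabilize the square root, is what makes the threshold $\delta/4$ arise naturally and clarifies why the hypothesis $\Im z_0>\delta/4$ is exactly the right one, mirroring the half-plane $\Omega_{\delta/4}$ appearing in Theorem \ref{Entropy for extension}.
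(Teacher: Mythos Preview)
Your proof is correct. The second and third steps match the paper's argument essentially verbatim. The first estimate, however, is obtained by a genuinely different route: the paper bounds $\tilde P_r(z_0)$ via the reflection formula \eqref{inversion formula}, reducing to a bound on $\tilde P_r^*(\overline{z_0})$ in the lower half-plane and reusing inequality \eqref{trivial inequality for the differential equation} already established in the proof of Theorem~\ref{Entropy for extension}; you instead solve \eqref{diff equation for tilde P} directly by variation of parameters, feeding in the boundedness of $\tilde P_s^*(z_0)$ for $z_0\in\Cm_+$. Both routes ultimately hinge on the same weighted integrability $\int_0^\infty |f_1(s)|e^{s\delta/4}\,ds<\infty$, which you justify carefully via Cauchy--Schwarz with an auxiliary exponent $\alpha\in(\delta/2,\delta)$. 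Your approach is a bit more self-contained, while the paper's is more economical in that it recycles \eqref{trivial inequality for the differential equation} rather than setting up a fresh integrating-factor calculation.
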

\begin{proof}
Integrating \eqref{trivial inequality for the differential equation} for $z = \ol{z_0}$, we get 
\begin{gather*}
    |\Tilde{P}_r^*(\ol{z_0})|\ls 1 +\int_0^{r}\left(\sqrt{|\K'(\rho/2)|} + |\K'(\rho/2)|\right)e^{\rho\Im z_0}\, d\rho.
\end{gather*}
The entropy decay $\K(r) = O\left(e^{-\delta r}\right)$ implies $|\Tilde{P}_r^*(\ol{z_0})| \ls e^{(\Im z_0 - \delta/4)r}$ and the bound for $\Tilde{P}_r(z_0)$ follows from \eqref{inversion formula}. Recall differential equation \eqref{diff equation for tilde P_*}. By Lemma \ref{convergence of the regularized system}, $\Tilde P^*_{r}(z_0)$ converges as $r\to\infty$ and hence $\Tilde P^*_{r}(z_0)$ is bounded in $r$. Thus, \eqref{diff equation for tilde P_*} and Lemma \ref{coefficients bound} give
\begin{gather*}
    \left|\frac{\d}{\d r}\Tilde{P}_r^*(z_0)\right|\ls\left(\sqrt{|\K'(r/2)|} + |\K'(r/2)|\right)\left|\Tilde{P}_r(z_0)\right| + |\K'(r/2)|.
\end{gather*}
If we integrate the latter inequality on $[r, +\infty)$ and apply Lemma \ref{convergence of the regularized system} together with the obtained bound for $\Tilde{P}_r(z_0)$, we will get the required bound for $\Tilde P^*_{r}(z_0) - \Pi(z_0)$. To conclude the proof of the corollary notice that Lemma \ref{lemma two repr kernels} yields
\begin{gather*}
    \int_{r}^{\infty} |P(x,z_0)|^2\,dx \ls \left(|\Pi(z_0)|^2 -|\Tilde P_{r}^*(z_0)|^2\right)  + |\Tilde P_{r}(z_0)|^2\ls e^{-\delta r/2}.
\end{gather*}
\end{proof}
\subsection{Analytic continuation via the Christoffel-Darboux formula}\label{section Different construction of the analytical extension}

\begin{thm}\label{theorem C-D continuation}
Let $p,q\in L^1_{\loc}(\R_+)$ be real-valued functions and $Q = \left(\begin{smallmatrix}-q&p\\p&q\end{smallmatrix}\right)$.
Assume that there exists $\delta > 0$ such that $E_Q(r) = O\left(e^{-\delta r}\right)$ as $r\to\infty$. Fix an arbitrary number $h >\delta/4$. Then the integral \begin{gather}\label{infinite integral}
    \int_0^{\infty}P(x, z)\ol{P(x,ih)}\,dx
\end{gather}
converges uniformly on compact subsets of $\Omega_{\delta/4} = \{z\colon \Im z > -\delta/4\}$ and the function
\begin{gather}\label{extension of Pi via the C-D fomula on the line}
    z\longmapsto \frac{z + ih}{i\ol{\Pi(ih)}}\int_0^{\infty} P(x, z)\ol{P(x,ih)}\,dx, \quad z\in \Omega_{\delta/4}
\end{gather}
is analytic in $\Omega_{\delta/4}$ and coincides with $\Pi$ in $\Cm_+$.
\end{thm}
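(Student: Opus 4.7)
The plan is to interpret the partial integral as the reproducing-kernel pairing $2\pi k_r(ih, z)$ used in the proof of Lemma \ref{lemma two repr kernels}: combining formula \eqref{formula for reproducing kernel via Krein system} with the representation of $k_r$ in terms of the regularized Krein functions (formula $(48)$ in \cite{Bessonov2020}) gives
\begin{gather*}
    \int_0^r P(x,z)\ol{P(x,ih)}\,dx \;=\; \frac{i}{z + ih}\Bigl(\Tilde{P}_r^*(z)\ol{\Tilde{P}_r^*(ih)} - \Tilde{P}_r(z)\ol{\Tilde{P}_r(ih)}\Bigr).
\end{gather*}
I would then analyse each of the four factors on the right as $r\to\infty$ and recombine, checking uniform convergence on compact subsets of $\Omega_{\delta/4}$.

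At the fixed point $ih$, since $\Im(ih) = h > \delta/4$, Corollary \ref{corollary which was step 2} furnishes $\Tilde{P}_r^*(ih) \to \Pi(ih)$ and $|\Tilde{P}_r(ih)|\ls e^{-\delta r/4}$. For the free variable $z$, part \eqref{entropy for extension part 2} of Theorem \ref{Entropy for extension} already shows that $\Tilde{P}_r^*(z)\to\Pi(z)$ uniformly on compact subsets of $\Omega_{\delta/4}$, where $\Pi$ now denotes the analytic continuation. The delicate factor is $\Tilde{P}_r(z)$ on the strip $\{-\delta/4 < \Im z \le 0\}$: the reflection formula \eqref{inversion formula} gives $\Tilde{P}_r(z) = e^{irz}\ol{\Tilde{P}_r^*(\ol{z})}$, and combining this with the uniform bound \eqref{exponential bound} from Lemma \ref{entropy and absolute continuity} I would deduce
\begin{gather*}
    |\Tilde{P}_r(z)| \le e^{r|\Im z|}e^{c(|z|+1)},\qquad \Im z \le 0.
\end{gather*}
Multiplied by the $e^{-\delta r/4}$ decay of $|\Tilde{P}_r(ih)|$, the cross term $\Tilde{P}_r(z)\ol{\Tilde{P}_r(ih)}$ is controlled by $e^{r(|\Im z|-\delta/4)}e^{c(|z|+1)}$, which tends to zero exponentially whenever $|\Im z|<\delta/4$, uniformly on compact subsets of $\Omega_{\delta/4}$.

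Combining the four convergences, I obtain
\begin{gather*}
    \lim_{r\to\infty}\int_0^r P(x,z)\ol{P(x,ih)}\,dx \;=\; \frac{i\,\Pi(z)\,\ol{\Pi(ih)}}{z+ih}
\end{gather*}
uniformly on compact subsets of $\Omega_{\delta/4}$. Since each truncated integral is entire in $z$, the limit is analytic in $\Omega_{\delta/4}$ by Weierstrass's theorem. Multiplying by $(z+ih)/(i\ol{\Pi(ih)})$ is legitimate because $\Pi$ is outer in $\Cm_+$ and therefore $\Pi(ih)\ne 0$, and the product equals $\Pi(z)$; on $\Cm_+$ this reproduces the original inverse Szeg\H{o} function, while on $\Omega_{\delta/4}$ it recovers the continuation supplied by Theorem \ref{Entropy for extension}.

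The main obstacle is the lower half-plane control of $\Tilde{P}_r(z)$: the a priori bound \eqref{exponential bound} only yields $|\Tilde{P}_r(z)|\le e^{r|\Im z|}e^{c(|z|+1)}$, so the entire argument rests on the exponential gain $e^{-\delta r/4}$ of $|\Tilde{P}_r(ih)|$ being strong enough to absorb this growth. This is precisely why the assumption $h>\delta/4$ is necessary — otherwise the two exponentials would not balance and the cross term need not decay.
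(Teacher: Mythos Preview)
Your argument is correct and takes a genuinely different route from the paper. Both proofs start from the same identity expressing the truncated integral as a bilinear form, but the paper uses the Christoffel--Darboux formula \eqref{first CD formula} in terms of the \emph{unregularized} solutions $P, P_*$, whereas you go directly to the regularized representation (formula $(48)$ in \cite{Bessonov2020}). That choice makes a real difference. With the unregularized $P_*$, the paper has no uniform control of $P_*(r,z)$ on the real line, so it must prove convergence of the integral by an indirect argument: it bounds the tail $F_{A,B}(z)$ separately on $\Cm_+$ and on $\Omega_{\delta/4}^-$ (the latter via the reflection formula \eqref{Krein system reflection formula} and a Cauchy--Schwarz estimate that picks up a factor $|\Im z|^{-1/2}$), and then bridges the real axis with a Cauchy integral over a rectangle. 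Your approach sidesteps all of this because the regularized $\Tilde{P}_r^*$ already converges uniformly on compact subsets of the full half-plane $\Omega_{\delta/4}$ (this is exactly what was established in the proof of Theorem~\ref{Entropy for extension}), and the cross term $\Tilde{P}_r(z)\ol{\Tilde{P}_r(ih)}$ is killed by the $e^{-\delta r/4}$ decay of $|\Tilde{P}_r(ih)|$ against the at-most-$e^{r|\Im z|}$ growth of $|\Tilde{P}_r(z)|$. The identification of the limit with $\Pi$ then comes for free, since the continuation of $\Pi$ into $\Omega_{\delta/4}$ was \emph{defined} as $\lim_r \Tilde{P}_r^*$. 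One small point you should make explicit: the identity from $(48)$ is a priori stated for $z$ in $\Cm_+$, but both sides are entire in $z$ for fixed $r$ and $z_0=ih$, so it extends to all of $\Cm$ by analytic continuation. With that remark your proof is shorter and cleaner; the paper's version has the advantage of being phrased entirely in terms of the original Krein solutions $P, P_*$, without appeal to the regularized system beyond the single decay estimate of Corollary~\ref{corollary which was step 2}.
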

\begin{proof}
Substitute $ih$ into Christoffel-Darboux formula \eqref{first CD formula}. We have
\begin{gather}\label{temp Christ - Darb formula}
    i \frac{P_*(r, z)\ol{P_*(r, ih)} - P(r, z)\ol{P(r, ih)}}{z  + ih} = \int_0^r P(x, z)\ol{P(x,ih)}\,dx,\quad z\in \Cm.
\end{gather}
Take an arbitrary increasing sequence $\rho_n\to\infty$ such that $P(\rho_n, ih)\to 0$ as $n\to\infty$. Then from Lemma \ref{lemma on condition (c) of Krein Theorem} we know that there exist a subsequence $n_k$ and $\gamma\in [0,2\pi)$  such that
\begin{gather*}
    P(\rho_{n_k},z)\to 0,\qquad   P_*(\rho_{n_k},z)\to e^{i\gamma}\Pi(z),
\end{gather*}
uniformly on compact subsets of $\Cm_+$. Substituting $\rho_{n_k}$ for $r$ into \eqref{temp Christ - Darb formula} and taking the limit as $k\to\infty$, we obtain
\begin{gather*}
    \nonumber
    i \frac{\Pi(z)\ol{\Pi(ih)}}{z  + ih} = \lim_{k\to\infty}\int_0^{\rho_{n_k}} P(x, z)\ol{P(x,ih)}\,dx, \quad z\in \Cm_+,
\end{gather*}
or, equivalently,
\begin{gather}
    \label{pi continuation on subseq}
    \Pi(z) = \frac{z + ih}{i\ol{\Pi(ih)}}\lim_{k\to\infty}\int_0^{\rho_{n_k}} P(x, z)\ol{P(x,ih)}\,dx, \quad z\in \Cm_+.
\end{gather}
Therefore, the fact that \eqref{extension of Pi via the C-D fomula on the line} defines an analytic continuation of $\Pi$ immediately follows from the convergence of integral \eqref{infinite integral}. 
\medskip

For two positive real numbers $A \le  B$, define
\begin{gather*}
    F_{A, B}(z) = \int_A^{B}P(r, z)\ol{P(r,ih)}\,dr.
\end{gather*}
From Corollary \ref{corollary which was step 2} we have
\begin{gather}\label{start of step 3}
    \int_{r}^{\infty} |P(x,ih)|^2\,dx \ls e^{-\delta r/2},\quad r\ge 0.
\end{gather}
Then, for $z\in\Cm_+$, the Cauchy-Schwartz inequality gives 
\begin{align*}
    |F_{A, B}(z)|&\le \sqrt{\int_A^B|P(r, z)|^2\,dr}\sqrt{\int_A^B|P(r, ih)|^2\,dr}
    \\
    &\stackrel{\eqref{start of step 3}}{\ls} \cdot \sqrt{\int_0^{\infty}|P(r, z)|^2\,dr}\cdot e^{-A\delta/4}\stackrel{\eqref{expression for the szego function}}{\ls}
    e^{-A\delta/4}\frac{|\Pi(z)|}{\sqrt{\Im z}}.
\end{align*}
Therefore, we have 
\begin{align}
    \label{bound in the upper half plane}
    |F_{A,B}(z)| \ls e^{-A\delta_1}\frac{|\Pi(z)|}{\sqrt{\Im z}}, \qquad z\in\Cm_+,
\end{align}
uniformly in $\Cm_+$. Because of reflection formula \eqref{Krein system reflection formula},  $F_{A, B}$ admits the following representation:
\begin{gather*}
    F_{A, B}(z) = \int_A^Be^{izr}\ol{P_*(r, \ol{z})}\ol{P(r,ih)}\,dr.
\end{gather*}
 By definition, for $\Delta > 0$ put $\Omega_{\Delta}^- = \{z\colon   0 > \Im z > -\Delta\}$. 
Using the same Cauchy-Schwartz argument for $z\in \Omega_{\delta/4}^-$ as for $z\in \Cm_+$, we get
\begin{align}
    \nonumber
    |F_{A, B}(z)| 
    &\le \sqrt{\int_A^Be^{2\Im\ol{z} r}|P(r, ih)|^2\,dr}\sqrt{\int_A^B|P_*(r, \ol{z})|^2\,dr} 
    \\
    \label{first bound for F_AB}
    &\ls \frac{e^{-A(\delta/4 - \Im\ol{z})}}{\delta/4 - \Im\ol{z}}\cdot \sqrt{\int_A^B|P_*(r, \ol{z})|^2\,dr}.
\end{align}
The function $|P_*(r, \ol{z})|^2$ is not summable on $\R_+$, however, the integral on the finite segment can be estimated by \eqref{second CD formula} and \eqref{expression for the szego function}. In other words, we have
\begin{align*}
    \int_A^B\left| P_*(r,\ol{z})\right|^2dr &= \int_A^B\left( \left|P(r,\ol{z})\right|^2  + 2 \Im(\ol{z})\int_0^{r} \left|P(s,\ol{z})\right|^2ds \right)dr 
    \\ 
    &= \int_A^B \left|P(r,\ol{z})\right|^2dr + 2\Im(\ol{z})  \int_A^B\int_0^{r} \left|P(s,\ol{z})\right|^2ds\,dr 
    \\
    &\le \int_0^{\infty} \left|P(r,\ol{z})\right|^2dr + 2\Im(\ol{z})  \int_A^B\int_0^{\infty} \left|P(s,\ol{z})\right|^2ds\,dr
    \\
    &= \left(\frac{1}{2\Im(\ol{z})}+ B - A \right) 2\Im(\ol{z})  \int_0^{\infty} \left|P(s,\ol{z})\right|^2ds
    \\
    &\le \left(\frac{1}{2\Im(\ol{z})}+B - A \right) |\Pi(\ol{z})|^2.
\end{align*}
If, in addition, $|B - A|\le 1$ then, for $z\in \Omega^-_{\delta/4}$, we have
\begin{align*}
    \frac{1}{2\Im(\ol{z})}+B - A &\le \frac{1 + 2\Im(\ol{z})}{2\Im(\ol{z})} \ls \frac{1}{\Im(\ol{z})},
    \\
    \int_A^B\left| P_*(r,\ol{z})\right|^2dr &\ls \frac{|\Pi(\ol{z})|^2}{\Im(\ol{z})}.
\end{align*}
Substituting the latter bound into  \eqref{first bound for F_AB}, we get
\begin{gather}
    \label{bound in the lower half plane}
    |F_{A,B}(z)|\ls \frac{e^{-A(\delta/4 - \Im\ol{z})}}{\delta/4 - \Im\ol{z}}\cdot\frac{|\Pi(\ol{z})|}{\sqrt{|\Im z|}},\quad z\in \Omega^-_{\delta/4}, \quad B - A\le 1.
\end{gather}
Fix a connected compact set $K\subset \Omega_{\delta/4}$. Let us show that there exists a positive constant $\alpha$ depending on $K$ such that
\begin{gather}\label{desired exponential bound}
    |F_{A,B}(z)| \ls e^{-A\alpha},
\end{gather}
uniformly for $A\le B$ and $z\in K$. Three different situations are possible:
\begin{gather*}
    K\subset \Cm_+,
    \quad 
    K \subset \Omega_{\delta/4}\cap \Cm_-,
    \quad
    K\cap \R \neq \emptyset.
\end{gather*}
In the first and in the second situations bound \eqref{desired exponential bound} for $B - A \le 1$ easily follows from \eqref{bound in the upper half plane} and \eqref{bound in the lower half plane} respectively.
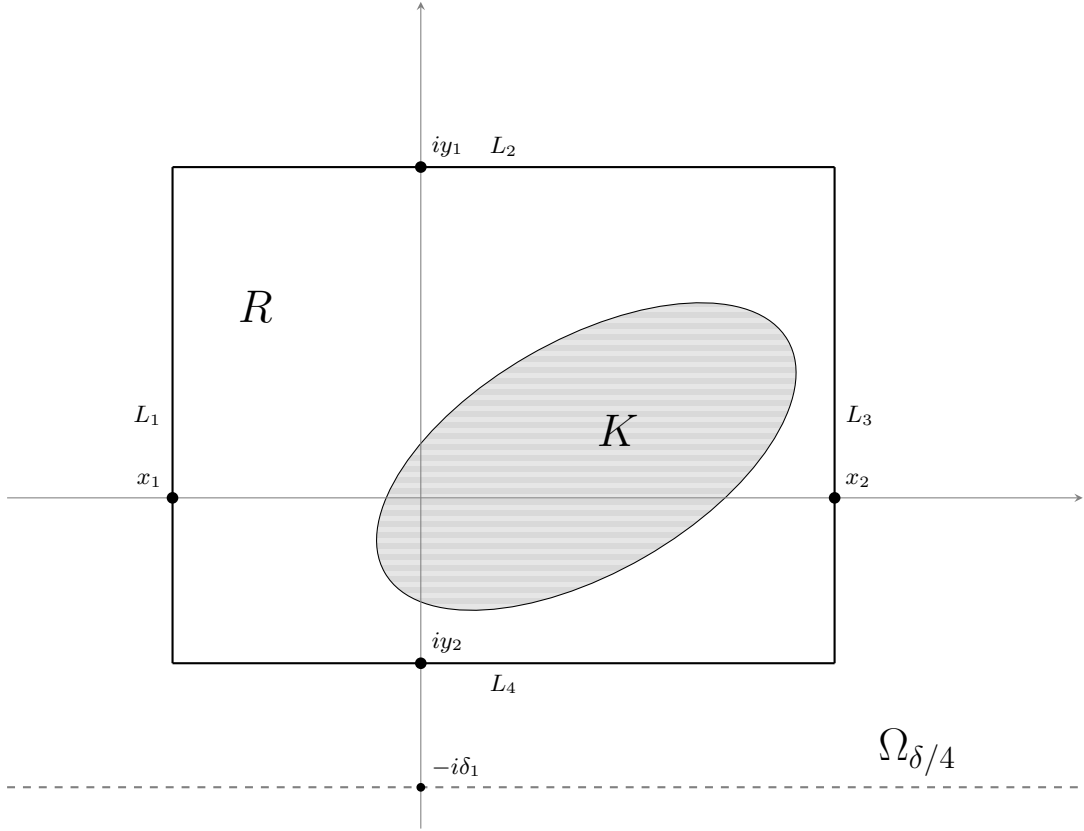
\begin{figure}[ht]
\centering
\tikzset{every node/.style={font=\scriptsize}}
    \begin{tikzpicture}[scale=1.1][every node/.append style={midway}]
    
    \coordinate (E) at (2,0.5);
    \draw[rotate around={-60:(E)}, pattern=horizontal lines light gray,label=above right:$K$] (E) ellipse (40pt and 80pt);
    \coordinate[label=above right:\Large $K$] (E) at (E);
    
    \coordinate (O) at (0,0);
    \coordinate (A) at (-3,-2);
    \coordinate (B) at (5,-2);
    \coordinate (C) at (5,4);
    \coordinate (D) at (-3,4);
    \coordinate (X) at (10,0);
    \draw[black, thick] (A) -- node[below] {$L_4$} (B);
    \draw[black, thick] (B) -- node[right] {$L_3$} (C);
    \draw[black, thick] (C) -- node[above] {$L_2$} (D);
    \draw[black, thick] (D) -- node[left] {$L_1$} (A);
    \draw [-stealth, gray](-5,0) -- (8,0);
    \draw [-stealth, gray](0,-4) -- (0,6);

    \coordinate (x1) at (-3,0);
    \coordinate[label=above left:$x_1$] (x1) at (x1);
    \fill[black] (x1) circle (2pt);
    
    \coordinate (x2) at (5,0);
    \coordinate[label=above right:$x_2$] (x2) at (x2);
    \fill[black] (x2) circle (2pt);
    
    \coordinate (y1) at (0,4);
    \coordinate[label=above right:$iy_1$] (y1) at (y1);
    \fill[black] (y1) circle (2pt);
    
    \coordinate (y2) at (0,-2);
    \coordinate[label=above right:$iy_2$] (y2) at (y2);
    \fill[black] (y2) circle (2pt);
    
    \draw[gray, dashed, thick] (-5, -3.5) -- (8, -3.5);
    
    \coordinate[label=\Large $R$] (-2, 2) at (-2, 2);
    
    \coordinate[label=\Large $\Omega_{\delta/4}$] (6, -3.5) at (6, -3.5);

    \coordinate (Deltah) at (0, -3.5);
    \coordinate[label=above right:$-i\delta_1$] (Deltah) at (Deltah);
    \fill[black] (Deltah) circle (1.5pt);
    
    \end{tikzpicture}
    \caption{ a compact $K$ and a rectangle $R$ in the proof of \eqref{desired exponential bound}}
    \label{fig:rectangle}
\end{figure}
If $K$ intersects the real line then take a rectangle $R$ with sides parallel to the real and imaginary axis of the complex plane such that 
$ K\subset R\subset \Omega_{\delta/4}$ and $ \dist(K, \d R) > 0$,
see Figure \ref{fig:rectangle}. Let $L_1, L_2, L_3$ and $L_4$ be the left, top, right and bottom sides of $R$ respectively and let $x_1, x_2, y_1, y_2$ be such that
\begin{gather*}
    L_1\subset \{z\colon \Re z = x_1\}, \qquad L_3\subset \{z\colon \Re z = x_2\},
    \\
    L_2\subset \{z\colon \Im z = y_1\}, \qquad L_4\subset \{z\colon \Im z = y_2\}.
\end{gather*}
By Lemma \ref{entropy and absolute continuity}, $\Pi$ is continuous in $\ol{\Cm_+}$ hence
\begin{gather}\label{boundedness of Pi on the boundary}
    \sup_{z\in\d R\cap \Cm_+} |\Pi(z)| < \infty,\quad \sup_{z\in\d R\cap \Cm_-} |\Pi(\ol{z})| < \infty.
\end{gather}
Denote $\delta/4 - |y_2| > 0$ by $\alpha$. Then, combining \eqref{bound in the upper half plane}, \eqref{bound in the lower half plane} and \eqref{boundedness of Pi on the boundary}, we obtain
\begin{gather}\label{bound on the boundary}
    |F_{A,B}(z)|\ls \frac{e^{-A\alpha}}{\sqrt{|\Im z|}},\quad z\in\d R\setminus \R, \quad B - A\le 1.
\end{gather}
For every $z_0\in K$ we have 
\begin{gather*}
    F_{A,B}(z_0) = \frac{1}{2\pi i}\int_{\d R} \frac{F_{A,B}(z)}{z - z_0}dz = \frac{1}{2\pi i}\sum_{n  = 1}^4 \int_{L_n} \frac{F_{A,B}(z)}{z - z_0}dz.
\end{gather*}
The inequality $|z - z_0| \ge \dist(K, \d R) > 0$ holds therefore 
\begin{gather}\label{Cauchy integration formula}
    | F_{A,B}(z_0)|\ls \int_{\d R} |F_{A,B}(z)||dz| = \sum_{n  = 1}^4 \int_{L_n} |F_{A,B}(z)||dz|.
\end{gather}
It remains to bound the integrals over the sides of $R$. We have
\begin{align*}
    \int_{L_2} |F_{A,B}(z)||dz| & =\int_{x_1}^{x_2}|F_{A,B}(x + iy_1)|dx \stackrel{\eqref{bound on the boundary}}{\ls} e^{-A\alpha},
    \\
    \int_{L_1} |F_{A,B}(z)||dz|  &=\int_{y_2}^{y_1}|F_{A,B}(x_1 + iy)|dy \stackrel{\eqref{bound on the boundary}}{\ls} e^{-A\alpha}\int_{y_2}^{y_1}\frac{1}{\sqrt{|\Im y|}}dy \ls e^{-A\alpha}. 
\end{align*}
The integrals over the segments $L_3$ and $L_4$ can be bounded similarly. Therefore 
\eqref{desired exponential bound} holds for $B - A\le 1$. If $ n \le B - A < n + 1$ for some positive integer $n$, then we have
\begin{gather*}
    |F_{A, B}(z)|\le \sum_{k = 0}^n |F_{A + k, A + k + 1}(z)| + |F_{A + n, B}(z)| \ls \sum_{k = 0}^n e^{-(A + k)\alpha} + e^{-(A + n)\alpha} \approx e^{-A\alpha},
\end{gather*}
hence \eqref{desired exponential bound} holds for all $A\le B$ with some other constant in $\ls$. The convergence of integral \eqref{infinite integral} follows and the proof of Theorem \ref{theorem C-D continuation} is finished.

\end{proof}
\section{Square summable potentials. Proof of Theorem \ref{theorem on equivalence}}\label{proof of Theorem 2}
In Theorem \ref{theorem on equivalence} the entries $p$ and $q$ of the potential $Q$ are in $L^2({\R_+})$. The coefficient $a$ of the corresponding Krein system (recall Section \ref{section Connections with the Dirac operator}) is also in $L^2({\R_+})$. As usual $\sigma$ is the spectral measure of the Dirac operator and of the Krein system, $\Pi$ is the inverse \Szego function of $\sigma$ and $P, P_*$ are the solutions of the Krein system with coefficient $a$. Let us start with a general result concerning Krein systems with $a\in L^2({\R_+})$.
\begin{knownthm}[S. Denisov]\label{theorem spectral theory of Dirac operator with L^2 potential}
Assume that the coefficient $a$ of Krein system \eqref{Krein system} belongs to $L^2(\R_+)$. Then $\sigma$ belongs to the \Szego class and there exists $\gamma\in[0, 2\pi)$ such that
\begin{gather*}
    P_*(r, \lambda)\to \Pi_{\gamma}(\lambda) = e^{i\gamma}\Pi(\lambda), \quad r\to\infty,
    \\
    |P_*(r,\lambda)|\approx 1, \quad |\Pi(\lambda)|\approx 1, \quad |P(r,\lambda)|\ls 1
\end{gather*}
uniformly in $\{\Im\lambda > \eps\}$ for every $\eps > 0$. Additionally, we have
\begin{gather*}
    \int_{\R}\left|\frac{1}{\Pi_{\gamma}(x)} - 1\right|^2dx < \infty.
\end{gather*}
\end{knownthm}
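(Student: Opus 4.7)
The plan is to analyze the Riccati equation satisfied by the ratio $R(r,\lambda) = P(r,\lambda)/P_*(r,\lambda)$. By \eqref{second CD formula}, $|R(r,\lambda)|\le 1$ for every $r\ge 0$ and $\lambda\in\Cm_+$. Differentiating with the help of \eqref{Krein system} gives $R' = i\lambda R - \ol{a} + aR^2$, and the integrating factor $e^{-i\lambda r}$ together with $R(0,\lambda) = 1$ yields
\begin{gather*}
    R(r,\lambda) = e^{i\lambda r} - \int_0^r e^{i\lambda(r-s)}\ol{a(s)}\,ds + \int_0^r e^{i\lambda(r-s)} a(s) R(s,\lambda)^2\,ds.
\end{gather*}
Each integral is a convolution with the kernel $e^{i\lambda\cdot}\mathbf{1}_{\R_+}$, which has $L^1$-norm $1/\Im\lambda$. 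Combining $|R|\le 1$ in the nonlinear term with Young's inequality, $R(\cdot,\lambda)\in L^2(\R_+)$ uniformly on $\{\Im\lambda\ge\eps\}$, and Cauchy--Schwarz then gives $aR(\cdot,\lambda)\in L^1(\R_+)$ uniformly.

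Next, since $\frac{\d}{\d r}\log|P_*(r,\lambda)|^2 = -2\Re(aR)$, the uniform integrability of $aR$ yields $|P_*(r,\lambda)|\approx 1$ uniformly on $\{\Im\lambda>\eps\}$ and shows $|P_*|$ admits a limit as $r\to\infty$. By \eqref{second CD formula}, $\int_0^\infty |P|^2\,ds < \infty$, so Krein's Theorem \ref{Krein theorem} gives that $\sigma$ belongs to the \Szego class. Writing $aP = (aR)P_*$ and using $|P_*|\ls 1$, we see $aP\in L^1(\R_+)$ uniformly, so $P_*(r,\lambda) = 1 - \int_0^r aP\,ds$ converges uniformly on $\{\Im\lambda>\eps\}$ to a limit which, by Lemma \ref{lemma on condition (c) of Krein Theorem}, equals $e^{i\gamma}\Pi$ for some $\gamma\in[0,2\pi)$. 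The remaining bounds $|\Pi|\approx 1$ and $|P|\ls 1$ follow from $|P_*|\approx 1$ and $|P|\le|P_*|$.

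For the final claim, passing to the limit $r\to\infty$ in $P_*(r,\lambda) - 1 = -\int_0^r a(s)P(s,\lambda)\,ds$ gives
\begin{gather*}
    1 - \Pi_\gamma(\lambda) = \int_0^\infty a(s) P(s,\lambda)\,ds,\qquad \lambda\in\Cm_+.
\end{gather*}
The right-hand side is (up to the factor $1/\sqrt{2\pi}$) the analytic continuation to $\Cm_+$ of the image of $a\in L^2(\R_+)$ under the Krein isometry \eqref{spectral measure isometry for Krein system} into $L^2(\sigma)$; hence $\norm[1-\Pi_\gamma]_{L^2(\sigma)}^2 = 2\pi\norm[a]_{L^2(\R_+)}^2$. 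Since $w = |\Pi_\gamma|^{-2}$ a.e.\ on $\R$, we conclude
\begin{gather*}
    \int_\R\left|\frac{1}{\Pi_\gamma(x)} - 1\right|^2 dx = \int_\R|1-\Pi_\gamma(x)|^2\,w(x)\,dx \le \int_\R|1-\Pi_\gamma(x)|^2\,d\sigma(x) \le 2\pi\norm[a]_{L^2(\R_+)}^2.
\end{gather*}

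The main obstacle is the identification in the previous display: one must justify that the equality in $\Cm_+$ between $(1-\Pi_\gamma)/\sqrt{2\pi}$ and the Krein-isometric image of $a$ passes to an a.e.\ equality of their boundary values on $\R$. This requires a Fatou-type argument within the Paley--Wiener / de Branges framework underlying \eqref{spectral measure isometry for Krein system}, using that the image of $L^2(\R_+)$ sits inside an $H^2$-like subspace of $L^2(\sigma)$ consisting of boundary values of analytic functions in $\Cm_+$.
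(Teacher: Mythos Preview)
The paper's own ``proof'' of this known theorem is nothing more than a list of citations to \cite{Denisov2006} (Theorems 11.1 and 11.2, Lemma 4.6), so there is essentially no argument to compare against; your proposal is a self-contained proof rather than a citation.

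Your Riccati analysis is correct and is in fact close in spirit to Denisov's original proof of Theorem 11.1 in \cite{Denisov2006}. The derivation of $R\in L^2$ via Young's inequality, the bound $|P_*|\approx 1$ from $\partial_r\log|P_*|^2=-2\Re(aR)$, and the identification of the limit via Lemma~\ref{lemma on condition (c) of Krein Theorem} (using that $R\in L^2$ forces $P(r_n,\lambda_0)\to 0$ along some sequence, while you have already shown the full limit of $P_*$ exists) are all sound.

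The gap you flag at the end is genuine but closes cleanly with Smirnov's theorem, and you should carry this out rather than leave it as a remark. Set $h_r=(1-P_*(r,\cdot))/\sqrt{2\pi}$, the image of $a\mathbf{1}_{[0,r]}$ under the Krein isometry~\eqref{spectral measure isometry for Krein system}. Each $h_r$ lies in $PW_{[0,r]}$, hence is bounded in $\ol{\Cm_+}$ and belongs to $N^+(\Cm_+)$; since $1/\Pi_\gamma$ is outer, $h_r/\Pi_\gamma\in N^+$ as well. Its boundary values satisfy
\[
\int_\R\left|\frac{h_r(x)}{\Pi_\gamma(x)}\right|^2dx=\int_\R|h_r(x)|^2w(x)\,dx\le\norm[h_r]_{L^2(\sigma)}^2=\norm[a\mathbf{1}_{[0,r]}]_{L^2(\R_+)}^2,
\]
so by Smirnov's theorem $h_r/\Pi_\gamma\in H^2(\Cm_+)$. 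The same computation with $h_r-h_s$ shows the sequence is Cauchy in $H^2$. Convergence in $H^2$ implies pointwise convergence in $\Cm_+$, where you already know $h_r\to(1-\Pi_\gamma)/\sqrt{2\pi}$; hence $1/\Pi_\gamma-1\in H^2(\Cm_+)\subset L^2(\R)$ with norm at most $\sqrt{2\pi}\,\norm[a]_{L^2}$. This bypasses the need to identify the $L^2(\sigma)$-boundary values of the Krein image of $a$ directly.
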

\begin{proof}
The \Szego condition and the convergence in $\{\Im\lambda > \eps\}$ is stated in Theorem 11.1 from \cite{Denisov2006}; the assertion $\Pi_{\gamma}^{-1} - 1\in L^2(\R_+)$ is Theorem 11.2 in \cite{Denisov2006}. The boundedness of $P_*$ and $P_*^{-1}$ follows from the proof of Theorem 11.1 and Lemma 4.6 in \cite{Denisov2006} respectively. Christoffel-Darboux formula \eqref{second CD formula} gives $|P(r, z)| < |P_*(r,z)|$ for $z\in\Cm_+$. Finally, the convergence of $P_*$ to $\Pi_{\gamma}$ implies the inequalities for $\Pi$ and $\Pi^{-1}$.
\end{proof}

\subsection{Auxiliary results}
\begin{lemma}\label{lemma bound on Pi with alpha and beta}
Assume that $a\in L^2(\R_+)$ and $E_Q(r) = O\left(e^{-\delta r}\right)$ as $r\to\infty$ for some $\delta > 0$. Then $\Pi$ is analytic in $\Omega_{\delta/4}$ and for every $\alpha \in (0,1)$ the inequality $|\Pi(z)| \ls (1 + |z|)^{\alpha}$ holds uniformly in the closed half-plane $\ol{\Omega_{\alpha\delta/4}}$.
\end{lemma}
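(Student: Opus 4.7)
The analyticity of $\Pi$ on $\Omega_{\delta/4}$ is part (2) of Theorem~\ref{Entropy for extension}. For the growth bound the strategy is to combine two estimates on $\Pi$ and then interpolate them via the Phragm\'en--Lindel\"of principle: first, Denisov's uniform bound $|\Pi(z)|\approx 1$ on $\ol{\Cm_+}$ from Theorem~\ref{theorem spectral theory of Dirac operator with L^2 potential}; second, a linear estimate $|\Pi(z)|\ls_{\alpha'} 1+|z|$ on every strict substrip $\ol{\Omega_{\alpha'\delta/4}}$ with $\alpha'\in(0,1)$, which I would derive from the Christoffel--Darboux representation of Theorem~\ref{theorem C-D continuation}.

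For the linear estimate I would exploit the integral formula
\begin{gather*}
    \Pi(z)=\frac{z+ih}{i\ol{\Pi(ih)}}\int_0^\infty P(x,z)\ol{P(x,ih)}\,dx,\qquad h>\delta/4,
\end{gather*}
with $|\Pi(ih)|\gtrsim 1$ (from the first ingredient), reducing the bound on $\Pi$ to a bound on the integral. Following the dyadic Cauchy--Schwarz scheme used in the proof of Theorem~\ref{theorem C-D continuation}, the reflection identity $P(r,z)=e^{izr}\ol{P_*(r,\bar z)}$ together with the Christoffel--Darboux inequality $\int_A^B|P_*(x,\bar z)|^2\,dx\ls (B-A+1/|\Im z|)|\Pi(\bar z)|^2$ on a unit interval and $|\Pi(\bar z)|\ls 1$ yields $\int_n^{n+1}|P(x,z)|^2\,dx\ls e^{2|\Im z|(n+1)}/|\Im z|$, while Corollary~\ref{corollary which was step 2} applied at $z_0=ih$ gives $\int_n^{n+1}|P(x,ih)|^2\,dx\ls e^{-\delta n/2}$. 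Applying Cauchy--Schwarz on each dyadic piece, summing the resulting geometric series (convergent precisely because $|\Im z|<\delta/4$, with the sum controlled by $1/(\delta/4-|\Im z|)$), and multiplying by $|z+ih|\ls 1+|z|$ produces the desired linear estimate.

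For the interpolation I would apply the three-lines theorem in the strip $\{-\alpha'\delta/4\leq\Im z\leq 0\}$ to the subharmonic function $\log|\Pi|$ in its Poisson-integral form; this is legitimate because the Poisson kernel of a strip decays exponentially in the real variable, so polynomial growth of the boundary data is harmless. With $\log|\Pi|\leq O(1)$ on the upper edge and $\log|\Pi|\ls \log(1+|\cdot|)+O(1)$ on the lower edge, a standard computation yields $|\Pi(z)|\ls (1+|z|)^{|\Im z|/(\alpha'\delta/4)}$ throughout the strip, which at $|\Im z|=\alpha\delta/4$ reads $(1+|z|)^{\alpha/\alpha'}$; letting $\alpha'\to 1^-$ recovers the claim $|\Pi(z)|\ls (1+|z|)^\alpha$. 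The main obstacle is the careful bookkeeping of the $\alpha'$-dependent constants through the dyadic estimate and the Poisson integral, since both the geometric-series sum and the Poisson normalization on a strip of width $\alpha'\delta/4$ degenerate as $\alpha'\to 1^-$, and these degeneracies must be balanced in the limit in order to recover the sharp exponent $\alpha$ uniformly on $\ol{\Omega_{\alpha\delta/4}}$.
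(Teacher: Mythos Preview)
Your outline is sound up through the linear estimate $|\Pi(z)|\ls (1+|z|)/(1-\alpha')$ on the line $\{\Im z=-\alpha'\delta/4\}$, but the final interpolation step cannot recover the sharp exponent~$\alpha$. Running the three-lines argument between $\{\Im z=0\}$, where $|\Pi|\ls 1$, and $\{\Im z=-\alpha'\delta/4\}$, where $|\Pi|\ls (1+|z|)/(1-\alpha')$, produces at height $\Im z=-\alpha\delta/4$ the bound
\[
  |\Pi(z)|\;\ls\;(1-\alpha')^{-\alpha/\alpha'}(1+|z|)^{\alpha/\alpha'}.
\]
As $\alpha'\to 1^-$ the exponent tends to $\alpha$ but the prefactor diverges; optimizing over $\alpha'$ yields at best $|\Pi(z)|\ls (1+|z|)^{\alpha}\bigl(\log(2+|z|)\bigr)^{\alpha}$. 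The degeneracies you flag cannot be ``balanced'': the $1/(1-\alpha')$ comes from the geometric series and is genuinely there, so your scheme proves only the weaker statement that $|\Pi(z)|\ls_{\beta}(1+|z|)^{\beta}$ on $\ol{\Omega_{\alpha\delta/4}}$ for every $\beta>\alpha$. (This happens to suffice for Corollary~\ref{corollary from the entropy decay to the H2 }, since one may pick $\beta\in(\alpha,\tfrac12)$, but it does not prove the lemma as stated.) A secondary point: to justify the Poisson/three-lines step you need an a~priori growth bound on $\Pi$ inside the strip; the exponential estimate of Corollary~\ref{corollary bound for extended function Pi} provides this, and you should invoke it.

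The paper avoids the interpolation loss altogether by deriving $(1+|z|)^{\alpha}$ \emph{directly} on the critical line $\Im z=-\alpha\delta/4$ through a bootstrap between $\Pi(z)$ and $\Pi(\bar z)$. Using the Christoffel--Darboux identity the paper compares $\Pi_\gamma(z)$ with $P_*(r,z)$ and obtains $\int_r^{r+1}|P_*(\rho,z)|\,d\rho\ge |\Pi_\gamma(z)|-C(1+|z|)e^{-(1-\alpha)\delta r/4}$; choosing $r=r(z)$ so that the error is $\tfrac12|\Pi_\gamma(z)|$ and feeding this into $|\Pi(\bar z)|^2=2\Im\bar z\int_0^\infty|P(x,\bar z)|^2\,dx$ via the reflection $|P(x,\bar z)|=e^{-\alpha\delta x/4}|P_*(x,z)|$ gives $|\Pi_\gamma(z)|e^{-\alpha\delta r(z)/4}\ls|\Pi(\bar z)|\ls 1$. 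Eliminating $r(z)$ between the two relations yields exactly $|\Pi_\gamma(z)|\ls(1+|z|)^{\alpha}$ with no logarithmic loss. Only then is Phragm\'en--Lindel\"of applied, to the function $\Pi(z)(z+i\delta)^{-\alpha}$, merely to propagate the bound from the line into the half-plane.
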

\begin{proof}
The analyticity of $\Pi$ immediately follows from Theorem \ref{Entropy for extension}, hence it remains to prove the bound $(1 + |z|)^{\alpha}$.
To simplify the exposition introduce $\delta_1 = \delta/4$. Fix an arbitrary $h > \delta_1$. By Theorem \ref{theorem spectral theory of Dirac operator with L^2 potential}, we have $P_*(r, ih)\to \Pi_{\gamma}(ih)$ as $r\to\infty$. An application of the Cauchy–Schwarz inequality to \eqref{Krein system} gives
\begin{align*}
    |P_*(r,ih) - \Pi_{\gamma}(ih)| &= \left| \int_{r}^{\infty} a(x) P(x,ih) \, dx \right| \le\norm[a]_{L_2(\R_+)}\cdot\sqrt{\int_{r}^{\infty}|P(x, ih)|^2\,dx}.
\end{align*}
Corollary \ref{corollary which was step 2} yields
\begin{gather}\label{exp fast convergence of P star to Pi}
    |P_*(r,ih) - \Pi_{\gamma}(ih)| \ls e^{-r\delta_1}, \quad r\ge 0.
\end{gather}
Reordering the terms in \eqref{temp Christ - Darb formula}, we get
\begin{gather}
    \label{P star from C-D}
    P_*(r,z) = \frac{z  + ih}{i\ol{P_*(r, ih)}}\int_0^r P(x, z)\ol{P(x,ih)}\,dx + \frac{P(r, z)\ol{P(r, ih)}}{\ol{P_*(r, ih)}},\quad z\in \Cm.
\end{gather}
On the other hand, in Theorem \ref{theorem C-D continuation} we showed \eqref{extension of Pi via the C-D fomula on the line}  that
\begin{gather*}
    \Pi_{\gamma}(z) =  \frac{z + ih}{i\ol{\Pi_{\gamma}(ih)}}\int_0^{\infty} P(x, z)\ol{P(x,ih)}\,dx, \quad z\in \Omega_{\delta_1}.
\end{gather*}
The two latter equalities together give 
\begin{align*}
    \Pi_{\gamma}(z) - P_*(r,z) &= \frac{z + ih}{i\ol{\Pi_{\gamma}(ih)}}\int_r^{\infty} P(x, z)\ol{P(x,ih)}\,dx - \frac{P(r, z)\ol{P(r, ih)}}{\ol{P_*(r, ih)}}
    \\
    &+\frac{(z  + ih)}{i}\int_0^r P(x, z)\ol{P(x,ih)}\,dx\left({\frac{1}{\ol{\Pi_{\gamma}(ih)}} - \frac{1}{\ol{P_*(r, ih)}}}\right), \quad z\in \Omega_{\delta_1} .  
\end{align*}
Let $z$ be a point with $\Im z = -\alpha\delta_1$. From \eqref{Krein system reflection formula} we have
\begin{gather*}
    |P(r, z)| = |e^{irz}\ol{P_*(r,\ol{z})}| = e^{\alpha\delta_1r}|P_*(r,\ol{z})|.
\end{gather*}
The functions $P_*(r,\ol{z})$ and $P_*(r, ih)^{-1}$ are uniformly bounded for $r\ge 0$ by Theorem \ref{theorem spectral theory of Dirac operator with L^2 potential}. Therefore 
\begin{align*}
    |\Pi_{\gamma}(z) - P_*(r,z)| &\ls (|z| + 1)\int_r^{\infty} e^{\alpha\delta_1 x }|P(x,ih)|\,dx + e^{\alpha\delta_1r}|P(r,ih)|
    \\
    &+(|z| + 1)|\Pi_{\gamma}(ih) - P_*(r,ih)|\int_0^{r} e^{\alpha\delta_1 x}|P(x,ih)|\,dx,
\end{align*}
uniformly for $z$ with $\Im z = -\alpha\delta_1$.
Corollary \ref{corollary which was step 2} gives us that the integral in the first term is $O\left(e^{-(1-\alpha)\delta_1r}\right)$ as $r\to\infty$. For the same reason the integral in the last term is bounded for $r\ge 0$. Because of \eqref{exp fast convergence of P star to Pi}, the last term is $O\left(e^{-r\delta_1}\right)$ as $r\to\infty$. Therefore,
\begin{align*}
    |\Pi_{\gamma}(z) - P_*(r,z)| &\ls (|z| + 1)e^{-(1-\alpha)\delta_1r} +  e^{\alpha\delta_1r}|P(r,ih)|,\quad r\ge 0,
    \\
    \int_r^{r + 1}|\Pi_{\gamma}(z) - P_*(\rho,z)|\,d\rho &\ls (|z| + 1)e^{-(1-\alpha)\delta_1r} + \int_r^{r + 1}e^{\alpha\delta_1 \rho}|P(\rho,ih)|\,d\rho,\quad r\ge 0.
\end{align*}
Applying Corollary \ref{corollary which was step 2} for $ih$ one more time, we get
\begin{gather*}
    \int_r^{r + 1}|\Pi_{\gamma}(z) - P_*(\rho,z)|\,d\rho \ls (|z| + 1)e^{-(1-\alpha)\delta_1r}.
\end{gather*}
Therefore there exists a constant $C$ such that
\begin{gather*}
    \int_r^{r + 1}|P_*(\rho,z)|\,d\rho\ge |\Pi_{\gamma}(z)| - C(|z| + 1)e^{-(1-\alpha)\delta_1r},\quad r\ge 0.
\end{gather*}
Define $r_0 = r_0(z)$ as the solution of the equation
\begin{gather}\label{equation for r}
    e^{-\delta_1(1 - \alpha) r} = \frac{|\Pi_{\gamma}(z)|}{2C(|z| + 1)}.
\end{gather}
Then for every $r \ge r_1 = \min(r_0, 0)$ we have $ \int_r^{r + 1}|P_*(\rho,z)|\,d\rho \ge {|\Pi_{\gamma}(z)|}/2$. Formulas \eqref{expression for the szego function} and \eqref{Krein system reflection formula} for the point $\ol{z}\in\Cm_+$ and the latter inequality give
\begin{align*}
    |\Pi_{\gamma}(\ol{z})|^2 &= 2\Im \ol{z}\int_0^{\infty} |P(x,\ol{z})|^2\,dx \gs \int_{r_1}^{r_1 + 1} e^{-2\alpha\delta_1x}|P_*(x,z)|^2\,dx
    \gs |\Pi_{\gamma}(z)|^2e^{-2\alpha\delta_1 r_1}.
\end{align*}

From Theorem \ref{theorem spectral theory of Dirac operator with L^2 potential} we know that $\Pi_{\gamma}(\ol{z})$ is uniformly bounded for $\Im z =- \alpha\delta_1$ hence the latter implies $|\Pi_{\gamma}(z)|e^{-\alpha\delta_1 r_1} \ls 1$. If $r_1 = 0$ then we obtain $|\Pi_{\gamma}(z)|\ls 1$. If $r_1 = r_0$  we get
\begin{gather*}
    e^{-\alpha\delta_1 r_1} = e^{-\alpha\delta_1 r_0} = \left(e^{-\delta_1(1 - \alpha) r_0}\right)^{\beta} \approx \left(\frac{|\Pi_{\gamma}(z)|}{|z| + 1}\right)^{\beta},
\end{gather*}
where $\beta = \frac{\alpha}{1 - \alpha}$. Hence for $z$ with $\Im z =- \alpha\delta_1$  we have
\begin{gather}
    \nonumber
    |\Pi_{\gamma}(z)|\left(\frac{|\Pi_{\gamma}(z)|}{|z| + 1}\right)^{\beta}\ls 1,
    \\
    \label{bound for PI in the halfplane}
    |\Pi_{\gamma}(z)| \ls (|z| + 1)^{\frac{\beta}{1 + \beta}} = (|z| + 1)^{\alpha}.
\end{gather}
The function $z\mapsto \Pi_{\gamma}(z)(z + i\delta)^{-\alpha}$ is analytic in the strip $\{z\colon |\Im z| \le \alpha\delta_1\}$; by Theorem \ref{theorem spectral theory of Dirac operator with L^2 potential} and \eqref{bound for PI in the halfplane}, it is bounded on the boundary of this strip. Furthermore, from Corollary \ref{corollary bound for extended function Pi} it follows that this function grows no faster than the exponential function. Hence we can apply the Phragm\'{e}n – Lindel\"{o}f principle in the strip to deduce that $f$ is bounded in the strip. Consequently \eqref{bound for PI in the halfplane} holds in the whole closed half-plane $\ol{\Omega_{\alpha\delta_1}}$. The proof is finished.
\end{proof}

\begin{cor}\label{corollary from the entropy decay to the H2 }
Assume that $a\in L^2(\R_+)$ and $E_Q(r) = O\left(e^{-\delta r}\right)$ as $r\to\infty$ for some $\delta > 0$. Then $\displaystyle\frac{\Pi(x - i\Delta)}{x + i}\in H^2(\Cm_+)$ for every $\Delta < \delta/8$.
\end{cor}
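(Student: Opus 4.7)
The plan is to reduce the $H^2$-membership to a pointwise polynomial growth bound on $\Pi$ in a strip slightly wider than $\{\Im z = -\Delta\}$, and then verify the $L^2$ bound along horizontal lines by a direct integral estimate. The hypothesis $\Delta < \delta/8$ is precisely what makes both sides of the ensuing inequalities compatible: it gives room to choose an exponent $\alpha$ with $4\Delta/\delta < \alpha < 1/2$, so that $\alpha\delta/4 > \Delta$ (enough analyticity room) while $2\alpha < 1$ (polynomial decay integrable against $|x+i|^{-2}$).

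Concretely, I would first fix such an $\alpha \in (4\Delta/\delta,\,1/2)$ and apply Lemma \ref{lemma bound on Pi with alpha and beta} to obtain analyticity of $\Pi$ in $\Omega_{\alpha\delta/4}$ together with the bound $|\Pi(z)|\ls (1+|z|)^\alpha$ uniformly on $\ol{\Omega_{\alpha\delta/4}}$. Since $\alpha\delta/4>\Delta$, the translated function $z\mapsto \Pi(z-i\Delta)$ is analytic throughout $\Cm_+$ and on a neighborhood of its closure; hence the candidate function $g(z) = \Pi(z-i\Delta)/(z+i)$ is analytic in $\Cm_+$.

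Next I would check that $\sup_{y>0}\int_\R |g(x+iy)|^2\,dx < \infty$, which is the definition of $H^2(\Cm_+)$. For $y>0$, since $y-\Delta > -\alpha\delta/4$, the growth bound from the lemma gives
\begin{gather*}
   \int_\R |g(x+iy)|^2\,dx \;\ls\; \int_\R \frac{(1+|x|+y+\Delta)^{2\alpha}}{x^2+(y+1)^2}\,dx.
\end{gather*}
The substitution $x=(y+1)t$ converts the right-hand side into
$(y+1)^{2\alpha-1}\int_\R (1+|t|)^{2\alpha}/(t^2+1)\,dt$ up to lower-order terms, and the $t$-integral converges because $2\alpha<1$. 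As $2\alpha-1<0$, the result is bounded uniformly in $y>0$, which is exactly what is required.

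I do not expect any serious obstacle here: the whole argument rests on Lemma \ref{lemma bound on Pi with alpha and beta} supplying the polynomial bound, and the only subtle point is the numerology that links $\Delta<\delta/8$ to the two strict inequalities $4\Delta/\delta<\alpha$ and $\alpha<1/2$ needed simultaneously. Once $\alpha$ is chosen in that open interval, the rest is a routine integral estimate on horizontal lines in $\Cm_+$.
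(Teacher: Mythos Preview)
Your proposal is correct and follows essentially the same route as the paper: both apply Lemma~\ref{lemma bound on Pi with alpha and beta} with an exponent $\alpha<1/2$ linked to $\Delta$ and then observe that $(1+|z|)^{\alpha-1}$ is square-integrable along horizontal lines. The only cosmetic difference is that the paper takes $\alpha = 4\Delta/\delta$ exactly (so the bound holds on $\ol{\Omega_\Delta}$), whereas you pick $\alpha$ strictly larger to gain a neighborhood of the closure; both choices work.
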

\begin{proof}
Fix an arbitrary $\displaystyle \Delta < {\delta}/{8}$ and let $\displaystyle\alpha = {4\Delta}/{\delta} < {1}/{2}$. By Lemma \ref{lemma bound on Pi with alpha and beta}, the inequality $|\Pi(z)| \ls (1 + |z|)^{\alpha}$ holds in $\Omega_{\alpha\delta/4} = \Omega_{\Delta}$. We have $\alpha - 1 < -\frac{1}{2}$ hence $\Pi(1 + |z|)^{-1}$ is square integrable over horizontal lines in $\Omega_{\Delta}$.
\end{proof}
\begin{lemma}\label{lemma from extension of the decrease of P}
Assume that $a\in L^2(\R_+)$, $\sigma$ is absolutely continuous and $\displaystyle\frac{\Pi(x - i\Delta)}{x + i}\in H^2(\Cm_+)$ for some $\Delta > 0$. Then for every $z_0\in \Cm_+$ and $\delta < \min(\Delta, \Im z_0)$ we have
\begin{gather}
    \label{decay of P in integral sense}
    \int_r^{\infty}|P(r, z_0)|^2\,dr \ls e^{-2\delta r},\quad r\ge 0.
\end{gather}
\end{lemma}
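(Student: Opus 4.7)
The plan is to follow the Christoffel-function approach sketched in the introduction for orthogonal polynomials, transported to the Krein system setting. By \eqref{m_r function formula} and \eqref{m infinity formula},
\[
    \int_r^{\infty}|P(s,z_0)|^2\,ds
    = \bm_\infty(z_0)^{-1} - \bm_r(z_0)^{-1}
    \approx \bm_r(z_0) - \bm_\infty(z_0),
\]
so the lemma reduces to showing $\bm_r(z_0) \le \bm_\infty(z_0) + Ce^{-2\delta r}$ for large $r$, which in turn follows from exhibiting a test function $f_r \in PW_{[0,r]}$ with $f_r(z_0) = 1$ and $\|f_r\|^2_{L^2(\sigma)} \le 2\pi\bm_\infty(z_0) + Ce^{-2\delta r}$. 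Passing to the limit in \eqref{first CD formula} along a suitable subsequence (Lemma \ref{lemma on condition (c) of Krein Theorem}) and using \eqref{expression for the szego function}, the limiting reproducing kernel is $k_\infty(z_0,z) = \frac{i\Pi_\gamma(z)\overline{\Pi_\gamma(z_0)}}{2\pi(z - \bar z_0)}$, and the unique extremal of the infinite minimization problem is
\[
    f^*(z) := \frac{\Pi_\gamma(z)(z_0 - \bar z_0)}{\Pi_\gamma(z_0)(z - \bar z_0)}, \qquad \|f^*\|^2_{L^2(\sigma)} = 2\pi\bm_\infty(z_0) = \frac{4\pi\Im z_0}{|\Pi(z_0)|^2}.
\]

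Because $\delta < \Im z_0$, $f^*$ is analytic in $\Omega_\delta$. Combining the $H^2$-hypothesis on $\Pi(\cdot - i\Delta)/(\cdot + i)$ with the $H^\infty(\Cm_+)$ bound on $\Pi$ from Theorem \ref{theorem spectral theory of Dirac operator with L^2 potential}, a Plancherel argument in the strip $-\Delta < \Im z < 0$ shows $f^*(\cdot - i\delta) \in H^2(\Cm_+)$, whence by Paley-Wiener
\[
    f^*(z) = \int_0^\infty \hat\phi(s)\,e^{izs}\,ds, \qquad \int_0^\infty |\hat\phi(s)|^2 e^{2\delta s}\,ds < \infty.
\]
The Fourier truncation $\tilde f_r(z) := \int_0^r \hat\phi(s)\,e^{izs}\,ds$ lies in $PW_{[0,r]}$ and satisfies $\|\tilde f_r - f^*\|^2_{L^2(\R)} \ls e^{-2\delta r}$ by Plancherel, and $|\tilde f_r(z_0) - 1| \ls e^{-(\delta + \Im z_0)r}$ by the Cauchy--Schwarz inequality. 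Using that the spectral density $|\Pi|^{-2}$ is bounded on $\R$ (a consequence of Theorem \ref{theorem spectral theory of Dirac operator with L^2 potential} and Jost-function type arguments for $a \in L^2$), the normalization $f_r := \tilde f_r/\tilde f_r(z_0) \in PW_{[0,r]}$ satisfies $f_r(z_0) = 1$ and $\|f_r - f^*\|^2_{L^2(\sigma)} \ls e^{-2\delta r}$.

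The critical last step is the orthogonality $\langle f_r - f^*, f^*\rangle_{L^2(\sigma)} = 0$, which upgrades the $e^{-\delta r}$ triangle-inequality rate for $\|f_r\|$ to the required $e^{-2\delta r}$ rate for $\|f_r\|^2$. A direct calculation using $d\sigma = |\Pi|^{-2}\,dx$ reduces $\langle f_r, f^*\rangle$ to a constant multiple of $\int_\R f_r(x)/(\Pi_\gamma(x)(x - z_0))\,dx$; closing the contour in $\Cm_+$ is legal, since $f_r \in PW_{[0,r]} \subset H^2(\Cm_+)$ decays sufficiently on large semicircles, $1/\Pi_\gamma$ is analytic and bounded there, and the only singularity is the simple pole at $z_0$ with residue $f_r(z_0)/\Pi_\gamma(z_0) = 1/\Pi_\gamma(z_0)$. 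The computation yields $\langle f_r, f^*\rangle = \|f^*\|^2_{L^2(\sigma)}$, so Pythagoras gives $\|f_r\|^2_{L^2(\sigma)} = \|f^*\|^2 + \|f_r - f^*\|^2 \le 2\pi\bm_\infty(z_0) + Ce^{-2\delta r}$, and the conclusion follows. I expect the main technical hurdle to be the Plancherel-in-the-strip argument that establishes $f^*(\cdot - i\delta) \in H^2(\Cm_+)$: the hypothesis provides $H^2$ boundary values on only one horizontal line, and propagating this bound to the rest of the strip requires combining it carefully with the $H^\infty$-bound on $\Pi$ in $\Cm_+$.
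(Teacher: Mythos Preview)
Your overall strategy is the same as the paper's: take the (normalized) extremal $f^{*}(z)=\Pi_\gamma(z)(z_0-\bar z_0)/\bigl(\Pi_\gamma(z_0)(z-\bar z_0)\bigr)$, truncate its Paley--Wiener representation to $[0,r]$, use the $H^{2}$ hypothesis on $\Pi(\cdot-i\Delta)/(\cdot+i)$ to get the exponential weight $e^{\delta s}$ on the Fourier side, and then exploit the orthogonality $\langle f_r-f^{*},f^{*}\rangle_{L^{2}(\sigma)}=0$ (which the paper obtains by an explicit contour integration equivalent to your $H^{2}$ Cauchy-formula computation) to upgrade $e^{-\delta r}$ to $e^{-2\delta r}$. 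The ``Plancherel-in-the-strip'' step you flag as the hurdle is in fact routine and is dispatched in one line in the paper.

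The genuine gap is your assertion that $|\Pi|^{-2}$ is bounded on $\R$. Theorem~\ref{theorem spectral theory of Dirac operator with L^2 potential} gives $|\Pi(\lambda)|\approx 1$ only in $\{\Im\lambda>\eps\}$, not on the real line, and on $\R$ it yields merely $\Pi_\gamma^{-1}-1\in L^{2}(\R)$. No ``Jost-function type argument'' for $a\in L^{2}$ produces a uniform lower bound for $|\Pi|$ on $\R$, and the hypotheses of the lemma do not supply one either. Consequently the passage from $\|\tilde f_r-f^{*}\|_{L^{2}(\R)}^{2}\lesssim e^{-2\delta r}$ to $\|\tilde f_r-f^{*}\|_{L^{2}(\sigma)}^{2}\lesssim e^{-2\delta r}$ is unjustified as written.

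The paper repairs exactly this step: it splits $\R$ into $S=\{|1/\Pi_\gamma-1|\le\tfrac12\}$, where $|\Pi|^{-1}\le 2$ and the $L^{2}(\R)$ bound on the tail applies directly, and $\R\setminus S$, where $|\Pi|^{-1}\le 3|1/\Pi_\gamma-1|$ so that $\int_{\R\setminus S}|\Pi|^{-2}<\infty$; on this set one uses instead the pointwise bound $|\tilde f_r(x)-f^{*}(x)|\lesssim e^{-\delta r}$ (your \eqref{difference of G  in the point }-type estimate). With this decomposition in place, your argument goes through and coincides with the paper's proof.
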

\begin{proof}
Fix a point $z_0$ in $\Cm_+$ and $\delta  < \min(\Delta, \Im z_0)$. Denote $  {\Pi_{\gamma}}/{(z - \ol{z_0})}$ by $G$. We know that ${G(z - i\delta )\in H^2(\Cm_+)}$ hence there exists a function $\phi\in  L_2(\R_+)$, such that
\begin{gather*}
    G(z - i\delta) = \int_0^{\infty}\phi(t)e^{itz}\,dt,\quad z\in\Cm_+,
    \\
    G(z) =\int_0^{\infty}\phi(t)e^{-t\delta  }e^{itz }\,dt ,\quad z\in\Omega_{\delta}.
\end{gather*}
Let $G_r$ be the projection of $G$ onto the space $PW_{[0,r]}$ or, in other words, let
\begin{gather*}
    G_{r}(z) = \int_0^{r}\phi(t)e^{-t\delta  }e^{itz }\,dt,\quad z\in\Cm.
\end{gather*}
Recall definition \eqref{definition of the minimization function for Krein system} of the minimizing function $\bm_r$. We will show that $\bm_r(z_0)$ converges to $\bm_{\infty}(z_0)$ exponentially fast in $r$. We have $G_r\in PW_{[0,r]}$ therefore
\begin{gather}
    \label{bound for m_r in terms of G}
    \bm_r(\sigma,z_0)\le \frac{1}{2\pi}\norm[G_r/G_r(0)]_{L^2(\sigma)}= \frac{1}{2\pi|G_{r}(z_0)|^2}\int_{-\infty}^{\infty}\frac{|G_{r}(t)|^2}{|\Pi(t)|^2}\, dt.
\end{gather}
Let us examine the right hand side of the latter inequality. For $z\in\ol{\Cm_+}$, we have
\begin{gather*}
    G(z) - G_r(z) = \int_{r}^{\infty}\phi(t)e^{-t\delta  }e^{itz }\,dt.
\end{gather*}
This difference is uniformly bounded in $\ol{\Cm_+}$:
\begin{align}
    \nonumber
    \left|G(z) - G_r(z)\right| &\le \int_{r}^{\infty}\left|\phi(t)e^{-t\delta  }e^{itz }\right|\,dt = \int_{r}^{\infty}\left|\phi(t)\right|e^{-t(\Im z + \delta )}\,dt
    \\
    \label{difference of G  in the point }
    &\le  e^{-r(\Im z + \delta )}\int_{r}^{\infty}\left|\phi(t)\right|e^{-(t - r) \delta } \,dt \ls e^{-(\Im z + \delta )r},\quad r\ge 0,
\end{align}
where the last inequality follows from the Cauchy - Schwarz inequality. Hence
\begin{gather*}
    \left||G(z_0)|^2 - |G_r(z_0)|^2\right|\le \left|G(z_0) - G_r(z_0)\right|\left(|G(z_0)| + |G_r(z_0)|\right) \ls e^{-(\delta  + \Im z_0) r}.
\end{gather*}
Furthermore, $|G(z_0)|\neq 0$ therefore
\begin{gather}
    \nonumber
    \left|\frac{1}{|G_{r}(z_0)|^2} - \frac{1}{|G(z_0)|^2} \right| = \left| \frac{|G(z_0)|^2 - |G_{r}(z_0)|^2}{|G_{r}(z_0)|^2|G(z_0)|^2}   \right| \ls e^{-(\delta  + \Im z_0) r},
    \\
    \label{pointwise bound on difference G}
    \frac{1}{|G_{r}(z_0)|^2} \le \frac{1}{|G_(z_0)|^2} + O\left(e^{-(\delta  + \Im z_0) r}\right) = \frac{4(\Im z_0)^2}{|\Pi(z_0)|^2} + O\left(e^{-(\delta  + \Im z_0) r}\right),\quad r\to\infty.
\end{gather}
Let us estimate the integral in \eqref{bound for m_r in terms of G}. We have
\begin{align}
    \nonumber
    \int_{-\infty}^{\infty}\frac{|G_{r}(t)|^2}{|\Pi_{\gamma}(t)|^2}\, dt &= \int_{-\infty}^{\infty}\left|\frac{G(t)}{\Pi_{\gamma}(t)} + \frac{G_{r}(t) - G(t)}{\Pi_{\gamma}(t)}\right|^2\, dt
    = \int_{-\infty}^{\infty}\left|\frac{1}{t - \ol{z_0}} + \frac{G_{r}(t) - G(t)}{\Pi_{\gamma}(t)}\right|^2\, dt 
    \\
    \label{differences of G with respect to sigma}
    &= \int_{-\infty}^{\infty}\frac{1}{|t - \ol{z_0}|^2} + 2\Re\left(\frac{1}{t - z_0}\cdot\frac{G_{r}(t) - G(t)}{\Pi_{\gamma}(t)} \right) + \left|\frac{G_{r}(t) - G(t)}{\Pi_{\gamma}(t)}\right|^2\, dt. 
\end{align}
Define the set $S = \left\{t\colon\left|\frac{1}{\Pi_{\gamma}(t) } - 1\right| \le \frac{1}{2}\right\}\subset \R$. Then $|\Pi_{\gamma}|^{-1}< 2$ on $S$ and 
\begin{gather}
    \label{second integral on S}
    \int_{S}\left|\frac{G_{r}(t) - G(t)}{\Pi_{\gamma}(t)}\right|^2\, dt \le 4\int_{\R}\left|G_{r}(t) - G(t)\right|^2 \approx \norm[e^{-\delta  t}\phi\textbf{1}_{[r,\infty)}]^2_{L_2(\R)} \ls e^{-2\delta  t}.
\end{gather}
On the other hand, on $\R \setminus S$ we have 
\begin{gather*}
    3\left|\frac{1}{\Pi_{\gamma}} - 1\right|\ge \left|\frac{1}{\Pi_{\gamma}} - 1\right| + 1\ge \frac{1}{|\Pi_{\gamma}|}.
\end{gather*}
Combining this with Theorem \ref{theorem spectral theory of Dirac operator with L^2 potential}, we get
\begin{gather}
    \nonumber
    \int_{\R \setminus S} \frac{1}{|\Pi_{\gamma}(t)|^2}\, dt \le 9\int_{\R} \left|\frac{1}{\Pi_{\gamma}(t) } - 1\right|^2\, dt < \infty,
    \\
    \label{second integral on R setminus S}
    \int_{\R\setminus S}\left|\frac{G_{r}(t) - G(t)}{\Pi_{\gamma}(t)}\right|^2\, dt \le \sup_{t\in \R}|G_{r}(t) - G(t)|^2\int_{\R \setminus S} \frac{1}{|\Pi_{\gamma}(t)|^2}\, dt \stackrel{\eqref{difference of G  in the point }}{\ls}  e^{-2\delta  r}.
\end{gather}
Next, consider the integral 
\begin{gather*}
    I = \int_{-\infty}^{\infty}\frac{1}{t - \ol{z_0}}\cdot\frac{G_{r}(t) - G(t)}{\Pi_{\gamma}(t)} \, dt.
\end{gather*}
We know that both $G_r - G$ and $\displaystyle\frac{1}{(z - \ol{z_0})\Pi_{\gamma}}$  belong to $H^2(\Cm_+)$. Therefore $\displaystyle\frac{G_r - G}{(z - \ol{z_0})\Pi_{\gamma}}\in H^1(\Cm_+)$ and consequently $I = 0$. Rewrite the second term in  \eqref{differences of G with respect to sigma} in the following way:
\begin{align}
    \nonumber
    \int_{-\infty}^{\infty}\frac{1}{t - z_0}\cdot\frac{G_{r}(t) - G(t)}{\Pi_{\gamma}(t)} \, dt &= \int_{-\infty}^{\infty}\frac{1}{t - z_0}\cdot\frac{G_{r}(t) - G(t)}{\Pi_{\gamma}(t)} \, dt - I
    \\
    \label{first integral without I}
    &= -2i\Im z_0\int_{-\infty}^{\infty}\frac{1}{(t - z_0)(t - \ol{z_0})}\cdot\frac{G_{r}(t) - G(t)}{\Pi_{\gamma}(t)} \, dt.
\end{align}
Define 
\begin{gather*}
    F(z) = \frac{G_{r}(z) - G(z)}{(z - z_0)(z - \ol{z_0})\Pi_{\gamma}(z)},\quad z\in \ol{\Cm_+}.
\end{gather*}
Let us calculate $\int_{\R}F(t)\,dt$ using the contour integration. Denote by $x_0$ and $y_0$ the real and imaginary parts of $z_0$ and put $z_1 = \frac{1}{2}(z_0 + x_0)$. The  contour $C_R$ is shown at at Figure \ref{fig:contour}; it consists of the horizontal segment $J_R = [x_0 - R, x_0 + R]$, two vertical segments $L_R^{(1)} = [x_0 +R, z_1 + R]$, $L_R^{(2)} = [z_1 - R, x_0 - R]$ and the semicircle $A_R$ with center $z_1$ of radius $R$.
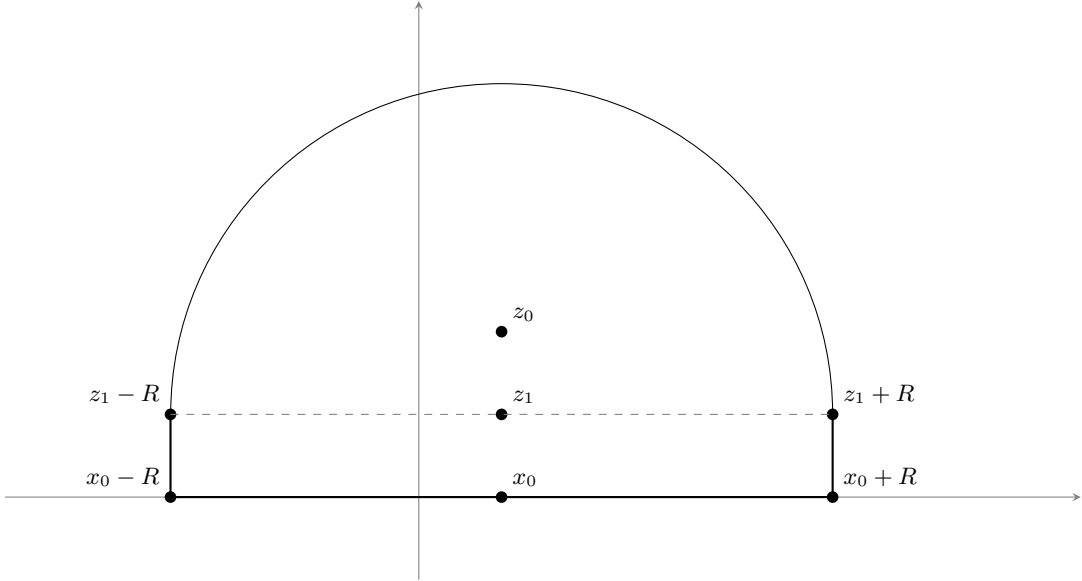
\begin{figure}[ht]
\centering
\tikzset{every node/.style={font=\scriptsize}}
    \begin{tikzpicture}[scale=1.1][every node/.append style={midway}]

    \draw [-stealth, gray](-5,0) -- (8,0);
    \draw [-stealth, gray](0,-1) -- (0,6);

    \coordinate (x1) at (-3,0);
    \coordinate[label=above left:$x_0 - R$] (x1) at (x1);
    \fill[black] (x1) circle (2pt);

    \coordinate (x2) at (5,0);
    \coordinate[label=above right:$x_0 + R$] (x2) at (x2);
    \fill[black] (x2) circle (2pt);
    
    \coordinate (x3) at (1, 0);
    \coordinate[label=above right:$x_0$] (x3) at (x3);
    \fill[black] (x3) circle (2pt);
    
    \coordinate (x4) at (-3,1);
    \coordinate[label=above left:$z_1- R$] (x4) at (x4);
    \fill[black] (x4) circle (2pt);
    
    \coordinate (x5) at (5,1);
    \coordinate[label=above right:$z_1 + R$] (x5) at (x5);
    \fill[black] (x5) circle (2pt);
    
    \coordinate (x6) at (1,1);
    \coordinate[label=above right:$z_1$] (x6) at (x6);
    \fill[black] (x6) circle (2pt);
    
    \coordinate (z_0) at (1, 2);
    \coordinate[label=above right:$z_0$] (z_0) at (z_0);
    \fill[black] (z_0) circle (2pt);
    
    \draw[black, thick] (x1) -- node[below] {$ $} (x4);
    \draw[black, thick] (x2) -- node[right] {$ $} (x5);
    \draw[black, thick] (x1) -- node[above] {$ $} (x2);
    \draw (x5) arc (0:180:4) ;
    
    \draw [dashed, gray](x4) -- (x5);
    
    \end{tikzpicture}
    \caption{ an integration contour}
    \label{fig:contour}
\end{figure}

The only singularity of $F$ inside $C_R$ is a simple pole at $z_0$ hence
\begin{gather*}
    \int_{C_R}F(z)dz = 2\pi i\Res_{z_0}F(z) = \frac{2\pi(G_{r}(z_0) - G(z_0))}{2\Im z_0 \Pi_{\gamma}(z_0)}.
\end{gather*}

By Theorem \ref{theorem spectral theory of Dirac operator with L^2 potential} and \eqref{difference of G  in the point }, the function $\frac{G_r - G}{\Pi_{\gamma}}$ is uniformly bounded in $\Im z > y_0/2$. Therefore
\begin{gather}
    \label{integral over the half circle}
    \left|\int_{A_R}F(z)\, dz\right|\ls \int_{A_R}\frac{1}{|(z - z_0)(z - \ol{z_0})|}|dz| = O\left(\frac{1}{R}\right),\quad R\to\infty.
\end{gather}
Next, consider $\displaystyle V(R) = \int_{L^{(1)}_R} F(z) \,dz + \int_{L^{(2)}_R} F(z) \,dz $. We have
\begin{gather*}
    |V(R)|^2 \ls\int_0^{\frac{y_0}{2}}|F(x_0 - R + iy)|^2dt + \int_0^{\frac{y_0}{2}}|F(x_0 + R + iy)|^2dy.
\end{gather*}
Therefore 
\begin{gather*}
    \int_{\R_+} |V(R)|^2 dR\le \iint_M |F(x + iy)|^2\,dx\,dy,
\end{gather*}
where $M = \{z\colon 0\le \Im z \le \frac{y_0}{2}\} = \R\times [0, \frac{y_0}{2}]$. For $z = x + iy\in M$ we have
\begin{gather*}
    |F(z)|\le \sup_{w\in M}\left|\frac{G_r(w) - G(w)}{w - z_0}\right|\cdot\frac{1}{|(z - z_0)\Pi_{\gamma}(z)|}\stackrel{\eqref{difference of G  in the point }}{\ls}\frac{1}{|(z - z_0)\Pi_{\gamma}(z)|}.
\end{gather*}
Combining the two latter inequalities, we get
\begin{gather}
\label{function V belongs to the L2}
    \norm[V]^2_{L^2(\R_+)}\le \int_{\R}\int_0^{\frac{y_0}{2}} \left|\frac{1}{(z - \ol{z_0})\Pi_{\gamma}(z)}\right|^2\,dy\,dx \ls \norm[\frac{1}{(z - \ol{z_0})\Pi_{\gamma}}]_{H^2(\Cm_+)}^2 < \infty.
\end{gather}
We have
\begin{gather*}
    \frac{2\pi(G_{r}(z_0) - G(z_0))}{2\Im z_0 \Pi(z_0)} = \int_{C_R} F(z) \,dz = V(R) + \int_{A_R}F(z)\, dz + \int_{J_R}F(z)\, dz,
    \\
    \left|\int_{J_R}F(z)\, dz - \frac{2\pi(G_{r}(z_0) - G(z_0))}{2\Im z_0 \Pi(z_0)}\right| \le |V(R)| + \left|\int_{A_R}F(z)\, dz\right|.
\end{gather*}
Because of \eqref{integral over the half circle} the second term tends to $0$ as $R\to\infty$; \eqref{function V belongs to the L2} implies that $\liminf\limits_{R\to\infty}\left|V(R)\right| = 0$. Thus, we have
\begin{gather}
    \label{asymptotics of the first integral}
    \left|\int_{\R}F(z)\, dz\right|  = \left|\frac{2\pi(G_{r}(z_0) - G(z_0))}{2\Im z_0 \Pi(z_0)}\right| \stackrel{\eqref{difference of G  in the point }}{\ls} e^{-(\delta + \Im z_0)r}.
\end{gather}
Substitution of \eqref{second integral on S}, \eqref{second integral on R setminus S}, \eqref{first integral without I} and \eqref{asymptotics of the first integral} into \eqref{differences of G with respect to sigma} gives
\begin{gather}
    \label{integral of G_r with respect to sigma}
    \int_{-\infty}^{\infty}\frac{|G_{r}(t)|^2}{|\Pi(t)|^2}\, dt =  \int_{-\infty}^{\infty}\frac{1}{|t - \ol{z_0}|^2}\, dt  + O\left(e^{-2\delta r}\right) = \frac{\pi}{\Im z_0} + O\left(e^{-2\delta r}\right),\quad r\to\infty.
\end{gather}

Substituting \eqref{pointwise bound on difference G} and \eqref{integral of G_r with respect to sigma} into \eqref{bound for m_r in terms of G} we get
\begin{align*}
    \bm_r(z_0) 
    &=  \left(\frac{4(\Im z_0)^2}{|\Pi(z_0)|^2} + O\left(e^{-(\delta  + \Im z_0) r}\right) \right)\left(\frac{1}{2\Im z_0} +O\left(e^{-2\delta  r}\right) \right)
    \\
    & = \frac{2\Im z_0}{|\Pi(z_0)|^2} + O\left(e^{-2\delta  r}\right)\stackrel{\eqref{m infinity formula} }{=}\bm_{\infty}(z_0) + O\left(e^{-2\delta  r}\right),\quad r\to\infty.
\end{align*}
To conclude the proof, rewrite $\bm_r$ and $\bm_{\infty}$ using relations \eqref{m_r function formula} and \eqref{m infinity formula}:
\begin{align*}
    \bm_r(z_0) - \bm_{\infty}(z_0) &= \left(\int_0^{r}|P(r, z_0)|^2\,dr\right)^{-1} - \left(\int_0^{\infty}|P(r, z_0)|^2\,dr\right)^{-1}
    \gs\int_r^{\infty}|P(r, z_0)|^2\,dr.
\end{align*}
Inequality \eqref{decay of P in integral sense} immediately follows.
\end{proof}
\begin{cor}\label{corollary from extension to the decay of entropy}
Under the assumptions of Lemma \ref{lemma from extension of the decrease of P} for every $\delta < \min(4, 4 \Delta)$, we have $E_Q(r) =O\left(e^{-\delta r}\right) $ as $r\to\infty$.
\end{cor}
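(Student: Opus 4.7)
The plan is to translate the integral decay of $|P(\cdot,i)|^2$ provided by Lemma \ref{lemma from extension of the decrease of P} into exponential decay of the entropy function $\K_H$, and then invoke Theorem \ref{known theorem on two entropies} together with the trivial bound $E_Q(r)\leq \bK_H(r)$.

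The key observation is that evaluating the regularized Krein system at $z=i$ annihilates the first term on the right-hand side of \eqref{diff equation for tilde P_*}: the equation reduces to $\frac{d}{dr}\tilde P_r^*(i) = -f_2(r)\tilde P_r^*(i) = -\K_H'(r/2)\tilde P_r^*(i)/4$. Integrating $\frac{d}{dr}\log|\tilde P_r^*(i)|^2 = -\K_H'(r/2)/2$, sending $r\to\infty$ and using Lemma \ref{convergence of the regularized system} to identify the limit with $|\Pi(i)|^2$ together with the property $\K_H(\infty)=0$ (which holds because $\sigma$ is in the Szegő class by Theorem \ref{theorem spectral theory of Dirac operator with L^2 potential}), I obtain the clean identity
\begin{gather*}
    |\tilde P_r^*(i)|^2 = |\Pi(i)|^2\, e^{-\K_H(r/2)}.
\end{gather*}

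Plugging this identity into Lemma \ref{lemma two repr kernels} at $z_0 = i$ and dropping the non-negative term $|\tilde P_r(i)|^2$ yields
\begin{gather*}
    |\Pi(i)|^2\bigl(1 - e^{-\K_H(r/2)}\bigr) \leq 2\int_r^{\infty}|P(x,i)|^2\,dx.
\end{gather*}
Since $\K_H(r)\to 0$, the elementary estimate $1 - e^{-\K_H(r/2)} \geq \K_H(r/2)/2$ holds for large $r$. The bound $\int_r^\infty |P(x,i)|^2\,dx \lesssim e^{-2\delta r}$ from Lemma \ref{lemma from extension of the decrease of P} is valid for every $\delta < \min(\Delta, 1)$; combining with the previous display and setting $s = r/2$ gives $\K_H(s) = O(e^{-\delta' s})$ for every $\delta' < \min(4, 4\Delta)$. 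Theorem \ref{known theorem on two entropies} then upgrades this to $\bK_H(s) = O(e^{-\delta' s})$, and since $\bK_H(r) = \sum_{n\geq 0} E_Q(r+n)$ with non-negative summands (Minkowski's determinant inequality for $2\times 2$ positive semidefinite matrices gives $E_Q\geq 0$), I conclude $E_Q(r)\leq \bK_H(r) = O(e^{-\delta' r})$.

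The crucial point is the clean identity in the second paragraph: the cancellation of the $(z-i)f_1$ term at $z=i$ produces the factor $1/2$ inside $\K_H(r/2)$, which combines with the factor $2\delta$ in the exponent of Lemma \ref{lemma from extension of the decrease of P} to yield precisely the sharp constant $\min(4,4\Delta)$. Evaluating at any other point $z_0\in\Cm_+$ would couple $\tilde P_r$ and $\tilde P_r^*$ through the $f_1$-term, spoiling this direct algebraic link to the entropy function.
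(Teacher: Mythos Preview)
Your proof is correct and follows essentially the same approach as the paper. The only presentational difference is that you solve the ODE at $z=i$ to obtain the closed-form identity $|\tilde P_r^*(i)|^2=|\Pi(i)|^2 e^{-\K_H(r/2)}$, whereas the paper integrates the relation $\frac{\partial}{\partial r}|\tilde P_r^*(i)|^2=\frac{|\K'(r/2)|}{2}|\tilde P_r^*(i)|^2$ on $[r,\infty)$ and uses the lower bound $|\tilde P_s^*(i)|\gtrsim 1$ to get $|\Pi(i)|^2-|\tilde P_r^*(i)|^2\gtrsim \K(r/2)$ directly; both routes feed into Lemma~\ref{lemma two repr kernels} and Theorem~\ref{known theorem on two entropies} in the same way.
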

\begin{proof}
Differential equation \eqref{diff equation for modulus of P_*} for $z = i$ becomes
\begin{gather}\label{diff eqaution in the point i}
    \frac{\d}{\d r}|\Tilde{P}_r^*(i)|^2  = -2\Im i\Re f_2(r) |\Tilde{P}_r^*(i)|^2 = \frac{|\K'(r/2)|}{2}|\Tilde{P}_r^*(i)|^2.
\end{gather}
We see that $|\Tilde{P}_r^*(i)|^2$ is increasing and Lemma \ref{convergence of the regularized system} gives $\displaystyle|\Tilde{P}_r^*(i)|^2 \le \lim_{r\to\infty} |\Tilde{P}_r^*(i)|^2 =  |\Pi(i)|^2$.
Rewrite the equality in Lemma \ref{lemma two repr kernels} at the point $z = i$ in a form
\begin{gather*}
    2\int_{r}^{\infty} |P(x,i)|^2\,dx = \left(|\Pi(i)|^2 - |\Tilde P_{r}^*(i)|^2\right)  + |\Tilde P_{r}(i)|^2 .
\end{gather*}
Both terms in the right hand side are nonnegative therefore, from Lemma \ref{lemma from extension of the decrease of P}, it follows that
\begin{gather}\label{fast convergence in the end}
    0\le |\Pi(i)|^2 - |\Tilde{P}_r^*(i)|^2 \ls e^{-2\delta r}
\end{gather}
holds for every $\delta < \min(1, \Delta)$. On the other hand,   $|\Tilde{P}_r^*(i)|\gs 1$ for $r\ge 0$ and   \eqref{diff eqaution in the point i} yields
\begin{align*}
    |\Pi(i)|^2 - |\Tilde{P}_r^*(i)|^2 = \int_r^{\infty}\frac{\d}{\d r}|\Tilde{P}_s^*(i)|^2 \, ds= \int_r^{\infty}\frac{|\K'(s/2)|}{2}|\Tilde{P}_s^*(i)|^2 \, ds\gs \K(r/2). 
\end{align*}
This together with \eqref{fast convergence in the end} and Theorem \ref{known theorem on two entropies} finishes the proof:
\begin{gather*}
    \K(r/2) \ls e^{-2\delta r},\qquad \K(r) \ls e^{-4\delta r},\qquad E_Q(r)\ls e^{-4\delta r}.
\end{gather*}
\end{proof}

\begin{rem}
The restrictive assertion $\delta < 4$ is imposed because the point $z = i$ is special for differential equation \eqref{diff equation for modulus of P_*}. At any other point the term $2\Re\left((z - i)f_1(r)\Tilde{P}_r(z)\ol{\Tilde{P}_r^*(z)}\right)$ does not vanish preventing further argumentation. To overcome this obstacle, below we introduce the scaled entropy and do the rescaling procedure.

\end{rem}
\subsection{Rescaling}\label{section rescaling}
\subsubsection{Auxiliary estimates.}
For a segment $I$ and a measurable function $f$ on $I$ we use the notation $\langle f \rangle_I$ to denote the average of $f$ over $I$, i.e.,
\begin{gather*}
    \langle f \rangle_I = \frac{1}{|I|}\int_I f(x)\, dx.
\end{gather*}
We fix some segment $I = [a,b]$ and define
\begin{gather*}
    I_1 = [a,(a + b)/2], \quad I_2 = [(3a + b)/4, (a + 3b)/4], \quad I_3 = [(a + b)/2, b]
\end{gather*}
-- the left, the middle and the right  half of $I$ respectively. This notation is used throughout the whole subsection.
\begin{lemma}\label{lemma inverse CBS inequality}
Let $g$ be a positive measurable function on $I$. Then 
\begin{gather*}
      \langle g\rangle_{I} \langle g^{-1}\rangle_{I} - 1 \ls \max_{n\in\{1,2,3\}}\left[\langle g\rangle_{I_n} \langle g^{-1}\rangle_{I_n} - 1\right],
\end{gather*}
provided that the latter maximum does not exceed $1$.
\end{lemma}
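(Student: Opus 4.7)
The plan is to subdivide $I$ further into quarters and exploit the algebraic structure that arises. Let $Q_1, Q_2, Q_3, Q_4$ be the four equal quarters of $I$ from left to right, so that $I_1 = Q_1\cup Q_2$, $I_2 = Q_2\cup Q_3$, $I_3 = Q_3\cup Q_4$. Setting $G_k = \langle g\rangle_{Q_k}$ and $H_k = \langle g^{-1}\rangle_{Q_k}$, one has $\langle g\rangle_I = (G_1+G_2+G_3+G_4)/4$, $\langle g^{-1}\rangle_I = (H_1 + H_2 + H_3 + H_4)/4$, and expanding the product gives
\begin{gather*}
    16\bigl(\langle g\rangle_I\langle g^{-1}\rangle_I - 1\bigr) = \sum_{k=1}^4 (G_kH_k - 1) + \sum_{1\le k < \ell\le 4}(G_kH_\ell + G_\ell H_k - 2).
\end{gather*}
Each summand is nonnegative: $G_kH_k \ge 1$ by Cauchy--Schwarz on $Q_k$, and $(G_kH_\ell)(G_\ell H_k) = (G_kH_k)(G_\ell H_\ell) \ge 1$ combined with AM--GM gives $G_kH_\ell + G_\ell H_k \ge 2$. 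Denoting by $M$ the maximum in the lemma statement, the goal becomes to bound each of the ten summands above by $\lesssim M$.

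The three hypotheses $\langle g\rangle_{I_n}\langle g^{-1}\rangle_{I_n} - 1 \le M$ expand (on $I_n = Q_n\cup Q_{n+1}$) into
\begin{gather*}
    \bigl[(G_nH_n - 1) + (G_{n+1}H_{n+1} - 1)\bigr] + \bigl[G_nH_{n+1} + G_{n+1}H_n - 2\bigr] \le 4M,\quad n = 1, 2, 3,
\end{gather*}
where both bracketed parts are nonnegative, hence each is $\le 4M$. This immediately handles all four ``diagonal'' terms $G_kH_k - 1$ and the three ``adjacent'' off-diagonals $G_nH_{n+1} + G_{n+1}H_n - 2$. For the three non-adjacent pairs $\{1,3\}$, $\{2,4\}$, $\{1,4\}$, the key observation is the algebraic identity $G_iH_k \cdot G_jH_j = G_iH_j \cdot G_jH_k$, which combined with $G_jH_j \geq 1$ yields $G_iH_k \le (G_iH_j)(G_jH_k)$ for every intermediate index $j$. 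Telescoping through $Q_2, Q_3$ gives $G_1H_3 \le (G_1H_2)(G_2H_3)$, $G_2H_4 \le (G_2H_3)(G_3H_4)$, $G_1H_4 \le (G_1H_2)(G_2H_3)(G_3H_4)$, and similarly for their ``swapped'' counterparts $G_3H_1, G_4H_2, G_4H_1$.

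The main obstacle is to show that the resulting sums of telescoped products stay within $O(M)$ of $2$. From $x + y \le 2 + 4M$ together with $xy \ge 1$ one obtains only $(x - y)^2 \le (x+y)^2 - 4xy \le 16M + 16M^2$, which, together with $|x + y - 2| \le 4M$, yields merely the square-root bound $|x - 1| \le C\sqrt{M}$ on each individual factor $x = G_iH_{i+1}$ (assuming $M \le 1$). The resolution is that when one expands $\prod_{j}(1+\alpha_j) + \prod_{j}(1+\tilde\alpha_j) - 2$ by multilinearity, the \emph{linear} terms group into pair sums $\alpha_j + \tilde\alpha_j = (G_jH_{j+1} - 1) + (G_{j+1}H_j - 1) \le 4M$, while every higher-order term is a product of at least two factors of size $O(\sqrt{M})$ and therefore already $O(M)$ under the assumption $M \le 1$. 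This gives $G_iH_k + G_kH_i - 2 \lesssim M$ for the non-adjacent pairs as well, and summing all ten estimates yields $\langle g\rangle_I\langle g^{-1}\rangle_I - 1 \lesssim M$ with an absolute constant, completing the proof.
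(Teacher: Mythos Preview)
Your argument is correct and genuinely different from the paper's. The paper never passes to quarters: it splits $I=I_1\cup I_3$, writes $\langle g\rangle_I\langle g^{-1}\rangle_I-1$ as $\tfrac12\sum_{n=1,3}(\langle g\rangle_{I_n}\langle g^{-1}\rangle_{I_n}-1)$ plus a cross term $\tfrac14(\langle g\rangle_{I_1}-\langle g\rangle_{I_3})(\langle g^{-1}\rangle_{I_1}-\langle g^{-1}\rangle_{I_3})$, and then controls the cross term by switching to $f=\log g$. Jensen gives $\langle e^f\rangle_{I_n}e^{-\langle f\rangle_{I_n}}\le 1+\eps$, and an external BMO-type estimate (cited from \cite{Bessonov2022}, \cite{Korey1998}) converts this into $\langle|f-\langle f\rangle_{I_n}|\rangle_{I_n}\lesssim\sqrt{\eps}$. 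The middle interval $I_2$ is used only as a bridge: its overlap with $I_1$ and $I_3$ forces $|\langle f\rangle_{I_1}-\langle f\rangle_{I_3}|\lesssim\sqrt{\eps}$, hence $|\langle g\rangle_{I_3}/\langle g\rangle_{I_1}-1|\lesssim\sqrt{\eps}$, and the cross term becomes $O(\eps)$. Your route is entirely algebraic: the quarter decomposition makes the role of $I_2$ explicit (it supplies the adjacent pairs $\{2,3\}$ needed to telescope from $Q_1$ to $Q_4$), and the ``linear terms pair up, quadratic terms are $O(M)$'' mechanism replaces both the logarithm and the cited oscillation lemma. The paper's approach is shorter once one accepts the external BMO input and connects more transparently to $A_\infty$-weight intuition; yours is self-contained and arguably more robust, since it never leaves the level of averages of $g$ and $g^{-1}$.
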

\begin{proof}
Denote the maximum from the statement of the lemma by $\eps$. We have
\begin{align*}
    \langle g\rangle_{I} \langle g^{-1}\rangle_{I} &=\frac{1}{4}\left( \langle g\rangle_{I_1} + \langle g\rangle_{I_3}\right)\left( \langle g^{-1}\rangle_{I_1} + \langle g^{-1}\rangle_{I_3}\right)
    \\
    &= \frac{1}{4} \left( \langle g\rangle_{I_1} - \langle g\rangle_{I_3}\right)\left( \langle g^{-1}\rangle_{I_1} - \langle g^{-1}\rangle_{I_3}\right) + \frac{1}{2}\left(\langle g\rangle_{I_1}\langle g^{-1}\rangle_{I_1} + \langle g\rangle_{I_3}\langle g^{-1}\rangle_{I_3}\right).
\end{align*}
Therefore
\begin{align}
    \nonumber
    \langle g\rangle_{I} \langle g^{-1}\rangle_{I} - 1&\le \frac{\langle g\rangle_{I_1}\langle g^{-1}\rangle_{I_1} - 1}{2} + \frac{\langle g\rangle_{I_3}\langle g^{-1}\rangle_{I_3} - 1}{2}
    \\
    \nonumber
    &+ \frac{1}{4} \left|\left( \langle g\rangle_{I_1} - \langle g\rangle_{I_3}\right)\left( \langle g^{-1}\rangle_{I_1} - \langle g^{-1}\rangle_{I_3}\right)\right|
    \\
    \nonumber
     &\le \eps + \frac{1}{4}\langle g\rangle_{I_1}\langle g^{-1}\rangle_{I_1}\left|\left( 1 - \frac{\langle g\rangle_{I_3}}{\langle g\rangle_{I_1}}\right)\left( 1 - \frac{\langle g^{-1}\rangle_{I_3}}{\langle g^{-1}\rangle_{I_1}}\right)\right|
     \\
     \label{formula ratio of entropies on [0,2]}
     &\le \eps + \frac{1 + \eps}{4}\left|\left( 1 - \frac{\langle g\rangle_{I_3}}{\langle g\rangle_{I_1}}\right)\left( 1 - \frac{\langle g^{-1}\rangle_{I_3}}{\langle g^{-1}\rangle_{I_1}}\right)\right|.
\end{align}
Let $f = \log g$. By Jensen's inequality, for $n\in \{1,2,3\}$ we have
\begin{gather}
    \label{almost Jensen for exponents on three intervals}
    1\le \langle e^f\rangle_{I_n}e^{-\langle f\rangle_{I_n}} \le \langle e^f\rangle_{I_n} \langle e^{-f}\rangle_{I_n} =  \langle g\rangle_{I_n} \langle g^{-1}\rangle_{I_n}\le 1 + \eps.
\end{gather}
It follows that (see formula (4.12) in  \cite{Bessonov2022} or (3.7) in \cite{Korey1998})
\begin{gather*}
    \langle|f - \langle f\rangle_{I_n}|\rangle_{I_n} \ls \sqrt{\eps}.
\end{gather*}
As a corollary, we get
\begin{align}
    \nonumber
    \left|\langle f\rangle_{I_1} - \langle f\rangle_{I_2}\right| &= \langle\left|\langle f\rangle_{I_1} - \langle f\rangle_{I_2}\right|\rangle_{I_1\cap I_2} 
    \\
    \nonumber
    &\le \langle\left|\langle f\rangle_{I_1} - f\right|\rangle_{I_1\cap I_2} + \langle\left|f - \langle f\rangle_{I_2}\right|\rangle_{I_1\cap I_2}
    \\
    \label{small differences of averages}
    &\le \frac{1}{2} \langle\left|\langle f\rangle_{I_1} - f\right|\rangle_{I_1} + \frac{1}{2} \langle\left|f - \langle f\rangle_{I_2} \right|\rangle_{I_2} 
    \ls \sqrt{\eps}.
\end{align}
Inequalities \eqref{almost Jensen for exponents on three intervals} and \eqref{small differences of averages} yield
\begin{gather*}
    \left|\frac{\langle g\rangle_{I_1}}{\langle g\rangle_{I_2}} -1 \right| = \left|\frac{\langle e^f\rangle_{I_1}}{\langle e^f\rangle_{I_2}} -1 \right| = \left|\frac{\langle e^f\rangle_{I_1}}{e^{\langle f\rangle_{I_1}}} \cdot \frac{e^{\langle f\rangle_{I_1}}}{e^{\langle f\rangle_{I_2}}}\cdot \frac{e^{\langle f\rangle_{I_2}}}{\langle e^f\rangle_{I_2}} -1 \right| \ls \sqrt{\eps}.
\end{gather*}
Similarly we can obtain
\begin{gather*}
    \left|\frac{\langle g^{-1}\rangle_{I_1}}{\langle g^{-1}\rangle_{I_2}} -1 \right|\ls \sqrt{\eps}, \quad \left|\frac{\langle g \rangle_{I_2}}{\langle g \rangle_{I_3}} -1 \right|\ls \sqrt{\eps}, \quad \left|\frac{\langle g^{-1}\rangle_{I_2}}{\langle g^{-1}\rangle_{I_3}} -1 \right|\ls \sqrt{\eps}.
\end{gather*}
Therefore, we have
\begin{gather}
    \label{average exponential ratios}
    \left|\frac{\langle g \rangle_{I_3}}{\langle g \rangle_{I_1}} -1 \right|\ls\sqrt{\eps}, \quad \left|\frac{\langle g^{-1}\rangle_{I_3}}{\langle g^{-1}\rangle_{I_1}} -1 \right|\ls\sqrt{\eps}.
\end{gather}
The substitution of \eqref{average exponential ratios}  into \eqref{formula ratio of entropies on [0,2]} concludes the proof.
\end{proof}
\begin{lemma}\label{hamiltonian lemma}
Let $G$ be a matrix-valued function on $I$ such that $G(t)\ge 0$ and $\det G(t) = 1$ for almost every $t\in I$. Additionally assume that for every unit vector $v\in \R^2$ and $t\in I$ we have
\begin{gather}\label{hamilnonian lemma double inequality for quad form of H}
    \langle G(t)v, v\rangle \approx 1.
\end{gather}
Then the inequality
\begin{gather}
    \label{main inequality in hamiltonian lemma}
    \det\langle G\rangle_I - 1\ls \max_{n\in\{1,2,3\}}\left[\det\langle G\rangle_{I_n} - 1\right] 
\end{gather}
holds, provided that the latter maximum does not exceed $1$.
\end{lemma}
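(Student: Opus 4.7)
The strategy is to represent $\det\langle G\rangle_I-1$ as a nonnegative double integral of a kernel that behaves like a squared distance, and then to reduce the bound on $I$ to those on $I_1,I_2,I_3$ via a quasi-triangle inequality combined with averaging over the overlaps of these subintervals.

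I would introduce the symmetric kernel $D(s,t):=\operatorname{tr}\bigl(G(s)G(t)^{-1}\bigr)-2$. Since the eigenvalues of $G(s)G(t)^{-1}$ have product $\det G(s)/\det G(t)=1$, the AM--GM inequality gives $D\ge 0$. A direct expansion using $\det G\equiv 1$ produces the identity
\[
\det\langle G\rangle_J-1=\frac{1}{2|J|^2}\int_J\int_J D(s,t)\,ds\,dt,\qquad J\subset I,
\]
which reduces the question to estimating this integral on $I$ against its analogues on $I_1,I_2,I_3$.

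Using hypothesis \eqref{hamilnonian lemma double inequality for quad form of H}, the eigenvalues of $G(t)^{\pm 1}$ lie in a compact subset of $(0,\infty)$. The symmetrization
\[
D(s,t)=\tfrac{1}{2}\operatorname{tr}\bigl((G(s)-G(t))G(t)^{-1}(G(s)-G(t))G(s)^{-1}\bigr)
\]
then yields the two-sided comparison $D(s,t)\approx \|G(s)-G(t)\|^2$ (Frobenius norm), with constants depending only on those in \eqref{hamilnonian lemma double inequality for quad form of H}. Combined with $\|A-B\|^2\le 2\|A-C\|^2+2\|C-B\|^2$, this gives the quasi-triangle inequality
\[
D(s,t)\ls D(s,u)+D(u,t),\qquad s,t,u\in I.
\]

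Set $\alpha_n:=\det\langle G\rangle_{I_n}-1$ and $\varepsilon:=\max_n\alpha_n\le 1$. Splitting $I=I_1\cup I_3$ in the integral identity gives
\[
\det\langle G\rangle_I-1=\frac{\alpha_1+\alpha_3}{4}+\frac{1}{|I|^2}\int_{I_1}\int_{I_3}D(s,t)\,ds\,dt.
\]
For the cross term, averaging the quasi-triangle inequality over an intermediate point $u\in I_2$ reduces it to the mixed integrals $\int_{I_1}\int_{I_2}D$ and $\int_{I_2}\int_{I_3}D$. Each such mixed integral is then bounded by the same averaging, now with an intermediate point $v\in I_1\cap I_2$ of measure $|I|/4$; this collapses the mixed integral into integrals over $I_n\times I_n$ for $n=1,2$, each of which equals $\tfrac{|I|^2}{2}\alpha_n$ by the identity and hence is $\ls\varepsilon|I|^2$. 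Keeping track of the ratios $|I_n|/|I_1\cap I_2|=2$ and $|I_n|/|I_2|=1$ in the two averaging steps yields $\int_{I_1}\int_{I_3}D\ls\varepsilon|I|^2$, and therefore $\det\langle G\rangle_I-1\ls\varepsilon$, which is \eqref{main inequality in hamiltonian lemma}.

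The main difficulty is establishing the comparison $D\approx\|\cdot\|^2$ and the resulting quasi-triangle inequality with constants independent of $G$; this is where hypothesis \eqref{hamilnonian lemma double inequality for quad form of H} is used in an essential way, as without a uniform eigenvalue bound $D$ is only exponentially related to the hyperbolic distance on the space of positive $2\times 2$ matrices with determinant $1$. Once that ingredient is in place, the rest is a routine averaging argument exploiting $|I_n|=|I|/2$ and $|I_1\cap I_2|=|I_2\cap I_3|=|I|/4$.
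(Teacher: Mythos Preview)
Your proof is correct and proceeds by a genuinely different route from the paper. The paper does \emph{not} use a pairwise kernel; instead it reduces the matrix inequality to a one-parameter family of scalar inequalities via the Gaussian-type identity $1/\sqrt{\det M}=\frac{1}{2\pi}\int_{\T}\langle M\zeta,\zeta\rangle^{-1}\,d\zeta$, which converts $\det\langle G\rangle_J-1$ into $\int_{\T}\bigl(\langle g_\zeta\rangle_J\langle g_\zeta^{-1}\rangle_J-1\bigr)\,d\zeta$ with $g_\zeta(t)=\langle G(t)\zeta,\zeta\rangle$, and then invokes the preceding scalar Lemma~\ref{lemma inverse CBS inequality} (whose proof in turn rests on Jensen's inequality and a BMO-type estimate for $\log g_\zeta$) pointwise in~$\zeta$. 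Your approach, by contrast, bypasses the scalar lemma entirely: the exact identity $\det\langle G\rangle_J-1=\tfrac{1}{2|J|^2}\iint_J D$ together with the quasi-metric bound $D(s,t)\lesssim D(s,u)+D(u,t)$ (this is precisely where hypothesis~\eqref{hamilnonian lemma double inequality for quad form of H} enters, via $D\approx\|G(s)-G(t)\|_F^2$) reduces everything to elementary averaging over the overlaps $I_1\cap I_2$ and $I_2\cap I_3$. The paper's route is more modular---it isolates the scalar problem as a lemma of possible independent use---while yours is more self-contained and arguably makes the role of the three half-intervals clearer, since the middle interval $I_2$ appears only as a bridge in the quasi-triangle argument rather than through any log-oscillation estimate. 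A small bonus of your argument is that the proviso $\varepsilon\le 1$ is never actually invoked, whereas the paper needs it to stay within the regime of the scalar lemma.
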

\begin{proof}
Denote the maximum from the statement of the lemma by $\eps$. For every $2\times 2$ matrix $M$ we have
\begin{gather*}
    \frac{1}{\sqrt{\det M}} = \frac{1}{\pi}\int_{\R^2} e^{-\langle Mx, x \rangle} dx.
\end{gather*}
Rewriting this in polar coordinates, we get
\begin{gather*}
    \frac{1}{\sqrt{\det M}} = \frac{1}{\pi}\int_\T\left(\int_0^{\infty}e^{-r^2\langle M\zeta, \zeta \rangle}r\,dr \right) d\zeta = \frac{1}{2\pi}\int_\T\frac{d\zeta}{\langle M\zeta, \zeta \rangle}.
\end{gather*}
Let $J$ be some subinterval of $I$ and define $A_J = \langle G\rangle_J$. By Jensen's inequality
\begin{align*}
    \frac{1}{\sqrt{\det A_J}} = \frac{1}{2\pi}\int_\T\frac{d\zeta}{\langle A_J\zeta, \zeta \rangle} &\le \frac{1}{2\pi}\int_\T\left\langle\frac{1}{\langle G\zeta, \zeta \rangle}\right\rangle_{J} d\zeta 
    \\ 
    &=\left\langle \frac{1}{2\pi}\int_\T\frac{d\zeta}{\langle G\zeta, \zeta \rangle}\right\rangle_{J}
    = \left\langle\frac{1}{\sqrt{\det G}}\right\rangle_{J} = 1.
\end{align*}
Therefore
\begin{gather*}
    0\le \int_\T\left(\left\langle\frac{1}{\langle G\zeta, \zeta \rangle}\right\rangle_{J} - \frac{1}{\langle A_J\zeta, \zeta \rangle}\right) d\zeta = 2\pi\left(1 - \frac{1}{\sqrt{\det A_J}}\right).
\end{gather*}
Denote $\langle G(t)\zeta, \zeta \rangle$ by $g_{\zeta}(t)$ then the latter equality can be rewritten as
\begin{gather*}
    \int_{\T} \langle g_{\zeta}^{-1}\rangle_{J} - \langle g_{\zeta}\rangle^{-1}_{J} \,d\zeta = 2\pi\left(1 - \frac{1}{\sqrt{\det A_J}}\right).
\end{gather*}
Next, we see that
\begin{gather*}
    \langle g_{\zeta}^{-1}\rangle_{J} - \langle g_{\zeta}\rangle^{-1}_{J} = \frac{\langle g_{\zeta}^{-1}\rangle_{J}\langle g_{\zeta}\rangle_{J} - 1}{\langle g_{\zeta}\rangle_{J}}\approx \langle g_{\zeta}^{-1}\rangle_{J}\langle g_{\zeta}\rangle_{J} - 1 
\end{gather*}
because of assumption \eqref{hamilnonian lemma double inequality for quad form of H}. Therefore
\begin{gather*}
    1 - \frac{1}{\sqrt{\det A_J}} \approx \int_{\T} F_J(\zeta)\,d\zeta,
\end{gather*}
where $F_J(\zeta) = \langle g_{\zeta}^{-1}\rangle_{J}\langle g_{\zeta}\rangle_{J} - 1$. Moreover, \eqref{main inequality in hamiltonian lemma} is equivalent to 
\begin{gather}
    \label{equivalent inequality in the hamiltonian lemma}
    \int_{\T}F_I(\zeta) \,d\zeta \ls \max_{n\in\{1,2,3\}}\left[\int_{\T}F_{I_n}(\zeta) \,d\zeta\right].
\end{gather}
Define the set $S\subset\T$ by
\begin{gather*}
    S = \{\zeta\colon F_n(\zeta) < 1,\;\; n = 1,2,3\}.
\end{gather*}
By Lemma \ref{lemma inverse CBS inequality}, for $\zeta\in S$ we have 
\begin{gather}\label{inequality by a sum of three}
    F(\zeta) \ls \max_{n\in\{1,2,3\}} F_n(\zeta).
\end{gather}
On the other hand, we have $\displaystyle \max_{n\in\{1,2,3\}} F_n(\zeta)\ge 1$ on $\T\setminus S$ and $F_I\ls 1$ on $\T\setminus S$ because of assumption \eqref{hamilnonian lemma double inequality for quad form of H}. Hence \eqref{inequality by a sum of three} holds on the whole circle with some other constant in $\ls$. Inequality \eqref{equivalent inequality in the hamiltonian lemma} follows and the proof is concluded.
\end{proof}

\subsubsection{Ordered exponent}
Let $A$ be a matrix-valued function with real entries in $L^1_{\loc}(\R_+)$. The solution of the differential equation 
\begin{gather}\label{differential equation of ordered exponent}
    \frac{\d }{\d t}X_A(r, t) = A(t) X_A(r, t), \quad X_A(r,r) = I,\quad t\ge r
\end{gather}
is called the ordered exponent of $A$. It follows from \eqref{differential equation of ordered exponent} that the relation
\begin{gather}
    \label{group property of ordered exponent}
    X_A(r, t)X_A(t_0, r) = X_A(t_0, t)
\end{gather}
holds for every $t_0 \le r \le t$. Moreover, $X_A$ admits the following explicit representation:
\begin{gather}
    \label{explicit representaion ordered exponent}
    X_A(r,t) = \sum_{m\ge 0}\int_r^t A(t_1) \int_r^{t_1} A(t_2)\int_r^{t_2}\ldots \int_r^{t_{m - 1}} A(t_m)\, dt_m\,\ldots dt_3\,dt_2\, dt_1,
\end{gather}
for details see the book \cite{Fried2002}. We remark that the series converges in the operator norm because the norm of the $m$-th term does nor exceed $\displaystyle \frac{1}{m!}\left(\int_r^t\norm[A(s)]\,ds\right)^m$.

Fix an arbitrary unit vector $v$ and consider
\begin{gather*}
    \frac{\partial}{\partial t}\norm[X_A(r,t)v]^2 = 2\left\langle\frac{\partial}{\partial t} X_A(r,t)v, X_A(r,t)v\right\rangle = 2\langle AX_A(r,t)v, X_A(r,t)v\rangle.
\end{gather*}
Hence 
\begin{gather*}
    2\norm[A(t)]\norm[X_A(r,t)v]^2\ge \frac{\partial}{\partial t}\norm[X_A(r,t)v]^2\ge - 2\norm[A(t)]\norm[X_A(r,t)v]^2,
    \\
    2\norm[A(t)] \ge \frac{\partial}{\partial t}\log \norm[X_A(r,t)v]^2 \ge-2\norm[A(t)].
\end{gather*}
Integrating on $[r, t]$ and taking the exponent, we get
\begin{gather}
    \label{norm inequality for ordered exponent}
    \exp\left[\int_{r}^t\norm[A(s)]\,ds\right]\ge \norm[X_A(r,t)v]\ge \exp\left[-\int_{r}^t\norm[A(s)]\,ds\right].
\end{gather}

\subsubsection{Entropy and rescaling.}
Recall definition \eqref{hamiltonian for dirac} of the entropy function. In this subsection we show what happens if we replace the integration segment $[r, r + 2]$ with $[r, r + 2l]$ in \eqref{hamiltonian for dirac} for different parameters $l > 0$. Namely, define the scaled entropy function by
\begin{gather}\label{definition of the scaled entropy}
    E_Q^{(l)}(r) = \frac{1}{l^2}\det\int_{r}^{r + 2l}  H_Q(t)\,dt - 4,
\end{gather}
so that $E_Q(r) = E_Q^{(1)}(r)$.
Equation \eqref{Dirac system} for $\lambda = 0$ can be rewritten as $N_Q'(t) = JQ(t)N_Q(t)$. This is a differential equation for the ordered exponent  of $JQ$ hence $N_Q(t) = X_{JQ}(0,t)$. Using \eqref{group property of ordered exponent} and \eqref{hamiltonian for dirac} for all $r < t$ we get
\begin{gather*}
    N_Q(t) = X_{JQ}(r, t)X_{JQ}(0, r),
    \\
    H_Q(t) = (X_{JQ}(0, r))^*(X_{JQ}(r, t))^*X_{JQ}(r, t)X_{JQ}(0, r).
\end{gather*}
For $r\ge 0$ we have $\trace(JQ(r)) = 0$ hence $\det X_{JQ}(0, r) = 1$. Therefore \eqref{definition of the scaled entropy} can be rewritten as 
\begin{gather}\label{another representation of scaled entropy}
    E_Q^{(l)}(r) = \frac{1}{l^2}\det\int_{r}^{r + 2l}  (X_{JQ}(r, t))^*X_{JQ}(r, t)\,dt - 4.
\end{gather}
\begin{thm}\label{theorem decay of different entropies}
Assume that $p,q\in L^2(\R_+)$ and let the scaled entropy $E_Q^{(l)}$ be defined by \eqref{another representation of scaled entropy}. Let $f$ be an arbitrary nonincreasing function with $\lim f(r) = 0$ as $r\to\infty$. If the assertion
\begin{gather}\label{decay of the scaled entropy in the lemma}
    E_Q^{(l)}(r) \ls f(r),\quad r\ge 0
\end{gather}
holds for some $l > 0$ then it holds for every $l > 0$.
\end{thm}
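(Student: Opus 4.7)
The plan is to transfer decay at one scale $l_0$ to all scales by combining two complementary inequalities: an ``upscaling'' bound $E_Q^{(2l)}(r) \ls \max\{E_Q^{(l)}(r), E_Q^{(l)}(r+l), E_Q^{(l)}(r+2l)\}$ coming from Lemma \ref{hamiltonian lemma}, and a ``downscaling'' bound $E_Q^{(l')}(r) \ls (l/l') E_Q^{(l)}(r)$ for $l' \le l$ coming from Minkowski's determinant inequality. The starting point is the rewriting $E_Q^{(l)}(r) = 4(\det \langle G_r\rangle_{[r,r+2l]} - 1)$ of \eqref{another representation of scaled entropy}, where $G_r(t) = X_{JQ}(r,t)^* X_{JQ}(r,t)$. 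Since $\trace(JQ(s)) = 0$, we have $\det G_r(t) \equiv 1$, and the cocycle relation \eqref{group property of ordered exponent} gives $\det\langle G_r\rangle_{[a,b]} = \det\langle G_a\rangle_{[a,b]}$ whenever $r \le a$, which is exactly what makes the rescaling possible.

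Hypothesis \eqref{hamilnonian lemma double inequality for quad form of H} of Lemma \ref{hamiltonian lemma} holds uniformly in $r \ge 0$: by \eqref{norm inequality for ordered exponent} and the Cauchy--Schwarz bound $\int_r^{r+2l}\|Q(s)\|\,ds \le \sqrt{2l}\,\|Q\|_{L^2(\R_+)}$, one has $\langle G_r(t) v,v\rangle = \|X_{JQ}(r,t)v\|^2 \approx 1$ for $t\in[r,r+2l]$ and unit $v \in \R^2$. Applying the lemma to $I=[r,r+4l]$ with subintervals of length $2l$, the identity above identifies each $\det\langle G_r\rangle_{I_k}$ with the corresponding value of $E_Q^{(l)}$, yielding the upscaling bound whenever the right side is at most $1$. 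Iterating $k$ times gives $E_Q^{(2^k l_0)}(r) \ls \max_j E_Q^{(l_0)}(r+jl_0) \ls f(r)$ by monotonicity of $f$.

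For downscaling, write $\langle G_r\rangle_{[r,r+2l]} = (l'/l)\langle G_r\rangle_{[r,r+2l']} + ((l-l')/l)\langle G_r\rangle_{[r+2l',r+2l]}$ and apply Minkowski's inequality $(\det(A+B))^{1/2} \ge (\det A)^{1/2} + (\det B)^{1/2}$ for positive semi-definite $2\times 2$ matrices. Since $\sqrt{\det}$ is concave on the positive cone and $\det G_r \equiv 1$, Jensen gives $\sqrt{\det \langle G_r\rangle_{[r+2l',r+2l]}} \ge 1$, and one arrives at
\[
\sqrt{\det\langle G_r\rangle_{[r,r+2l']}} - 1 \le (l/l')\bigl(\sqrt{\det\langle G_r\rangle_{[r,r+2l]}} - 1\bigr).
\]
Combined with the uniform bound on $G_r$ (from $Q\in L^2$ and \eqref{norm inequality for ordered exponent}), this yields $E_Q^{(l')}(r) \ls (l/l') E_Q^{(l)}(r)$ for all $l' \le l$.

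To conclude, given arbitrary $l > 0$, pick the smallest $k \ge 0$ with $2^k l_0 \ge l$; the upscaling iteration gives $E_Q^{(2^k l_0)}(r) \ls f(r)$, and downscaling from $2^k l_0$ to $l$ (noting $2^k l_0/l < 2$) yields $E_Q^{(l)}(r) \ls f(r)$. The main technical obstacle is the ``$\le 1$'' hypothesis of Lemma \ref{hamiltonian lemma}: it is automatic for $r$ beyond some threshold $R$ where $f(r)$ is sufficiently small, while for $r \in [0,R]$ one uses that $E_Q^{(l)}(r)$ is uniformly bounded (again via \eqref{norm inequality for ordered exponent}) and $f$ is bounded below, adjusting the implicit constants.
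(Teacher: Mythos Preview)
Your proof is correct and follows essentially the same strategy as the paper: upscale from $l_0$ to $2^k l_0$ by iterating Lemma~\ref{hamiltonian lemma}, then downscale to an arbitrary $l$. The only difference is that you supply a self-contained downscaling argument via Minkowski's determinant inequality (and treat the ``$\le 1$'' threshold explicitly), whereas the paper simply cites formula~5.1 in \cite{Bessonov2022} for that step.
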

\begin{proof}
Assume that \eqref{decay of the scaled entropy in the lemma} holds for  $l=l_0$. Let us show that for all large $r$, the assertions of Lemma \ref{hamiltonian lemma} hold for $G_r(t) = (X_{JQ}(r, t))^*X_{JQ}(r, t)$ on the segment $I=[r, r + 4l_0]$. Indeed, the properties of the ordered exponent instantly give that $G_r(t)\ge 0$ and $\det G_r(t) = 1$ for all $t\in I$. Furthermore, if $v\in \R^2$ is a unit vector, then
\begin{gather*}
    \langle G_r(t)v,v \rangle = \norm[X_{JQ}(r, t)v]^2_2 \approx 1,
\end{gather*}
because of inequality \eqref{norm inequality for ordered exponent} and the assertion $Q\in L^2$. An application of Lemma \ref{hamiltonian lemma} gives 
\begin{align*}
    \frac{1}{4}E_Q^{(2l_0)}(r) &= \det \langle G_r \rangle_I - 1\le \max_{n\in\{1,2,3\}}\left[\det\langle G_r\rangle_{I_n} - 1\right]
    \\
    &= \frac{1}{4}\max\left[E_Q^{(l_0)}(r), E_Q^{(l_0)}(r + l_0), E_Q^{(l_0)}(r + 2l_0)\right] \ls f(r).
\end{align*}
Therefore \eqref{decay of the scaled entropy in the lemma} holds for $l = 2 l_0$. Iterating this procedure we get that it also holds for $l = 2^n l_0$, for every integer $n > 0$. It remains for us to notice that \eqref{decay of the scaled entropy in the lemma} for $l = l_0$ implies \eqref{decay of the scaled entropy in the lemma} for all $l < l_0$ (see formula 5.1 in \cite{Bessonov2022}).
\end{proof}
\subsection{Proof of Theorem \ref{theorem on equivalence}}
\begin{proof}[Proof of Theorem \ref{theorem on equivalence}]
As we mentioned at the beginning of Section \ref{proof of Theorem 2}, the assertion $p,q\in L^2(\R_+)$ is equivalent to $a\in L^2(\R_+)$, where $a$ is the coefficient of the Krein system corresponding to $D_Q$. Therefore Corollaries \ref{corollary from the entropy decay to the H2 } and \ref{corollary from extension to the decay of entropy} give us the inequalities
\begin{gather*}
    \min(4, 4 \Delta_{\Pi})\le \Delta_{E}\le 8\Delta_{\Pi}.
\end{gather*}
If $\Delta_{\Pi} < 1$ then this is exactly the claim of Theorem \ref{theorem on equivalence} otherwise let us use the following rescaling argument. Let $v > 0$ and consider Krein system \eqref{Krein system} with the coefficient $a^{(v)}\colon t\mapsto va(vr)$. Let
\begin{gather*}
    P_*^{(v)},\quad \Pi^{(v)},\quad H_Q^{(v)},\quad E_{Q^{(v)}},\quad \Delta^{(v)}_{\Pi}\quad\mbox{ and } \quad\Delta^{(v)}_{E}
\end{gather*}
 be the solution, the inverse \Szego function, the Hamiltonian, the entropy function and the parameters from Theorem \ref{theorem on equivalence} corresponding to this scaled Krein system. Simple calculation show that $P_*^{(v)}(r, \lambda) = P_*^{(v)}(vr, \lambda/r)$ and therefore
 \begin{gather*}
     \Pi^{(v)}(\lambda) = \Pi(\lambda / v), \qquad\Delta_{\Pi}^{(v)} = v\Delta_{\Pi}.
 \end{gather*}
If $v$ is such that $v\Delta_{\Pi} < 1$, then the conclusions of the theorem hold for the scaled Krein system. From the definition of the Hamiltonian we have $H_Q^{(v)}(t) = H_Q(vt)$ and hence $E_{Q^{(v)}}(r) = E^{(v)}_Q(vr)$, where $E^{(v)}_Q$ is the scaled entropy, see \eqref{definition of the scaled entropy}. From Theorem \ref{theorem decay of different entropies} it follows that $\Delta_{E}^{(v)} = v{\Delta}_{E}$. Therefore we have 
\begin{gather*}
   \frac{\Delta_{\Pi}}{\Delta_{E}} =\frac{\Delta^{(v)}_{\Pi}}{\Delta_{E}^{(v)}} \in [4,8].
\end{gather*}
The proof of the theorem is concluded.

\end{proof}
\section{Some applications}\label{sufficient conditions}
In this section we prove Theorems \ref{theorem on resonances} and \ref{theorem on sufficient conditions} and discuss the sharpness of the inequalities in Theorems \ref{Entropy for extension} and \ref{theorem on equivalence}.
\subsection{Proof of Theorem \ref{theorem on sufficient conditions}}
\subsubsection{Off-diagonal potentials.}\label{Section Sufficient condition in the diagonal case}
Recall that we have the potential $Q = \left(\begin{smallmatrix}0&p\\p&0\end{smallmatrix}\right)$ where $p$ is a real-valued function $p$ such that $\sup_{t\ge r}\left|\int_{r}^{t}p(s)\,ds\right| = O\left(e^{-\delta r}\right)$ as $r\to\infty$. In this case both $N_Q$ and  $H_{Q}$ defined by \eqref{hamiltonian for dirac} are diagonal and can be calculated explicitly. Namely,
\begin{gather*}
    N_{Q}(t) = 
    \begin{pmatrix}
        \exp\left(-g_0(t)\right) &0
        \\
        0 & \exp\left(g_0(t)\right)
    \end{pmatrix},
    \qquad 
    H_{Q}(t) = 
    \begin{pmatrix}
        \exp\left(-2g_0(t)\right) &0
        \\
        0 & \exp\left(2g_0(t)\right)
    \end{pmatrix},
\end{gather*}
where $g_r(t) = \int_r^t p(s)\, ds$. Moreover,
\begin{align}
    \nonumber
    E_{Q}(r) = \det\int_{r}^{r + 2}  H_{Q}(t)\,dt - 4 &= \int_r^{r + 2} e^{-2g_0(t)}\,dt \cdot \int_r^{r + 2} e^{2g_0(t)}\,dt - 4
    \\
    \label{entropy expression via the g_r function}
    &= \int_r^{r + 2} e^{-2g_r(t)}\,dt \cdot \int_r^{r + 2} e^{2g_r(t)}\,dt - 4.
\end{align}
The assumption on $p$ of Theorem \ref{theorem on sufficient conditions} can be rewritten in the form $ \sup_{t\ge r} |g_r(t)| \ls e^{-\delta r/2}$. Hence the inequalities $\displaystyle \sup_{t\ge r}\left|e^{2g_r(t)} -2g_r(t) - 1\right| \ls e^{-\delta r}$ and $\displaystyle \sup_{t\ge r}\left|e^{-2g_r(t)} + 2g_r(t) - 1\right| \ls e^{-\delta r}$
hold and \eqref{entropy expression via the g_r function} gives $E_{Q}(r)\ls e^{-\delta r}$. This completes the proof in the off-diagonal case.
\subsubsection{General situation.}\label{sufficient condition general situation}
Recall \eqref{another representation of scaled entropy} that the the entropy function can be calculated as 
\begin{gather}\label{tmp def entropy}
    E_Q(r) = \det\int_{r}^{r + 2}  (X_{JQ}(r,t))^*X_{JQ}(r,t)\,dt - 4,
\end{gather}
where $X_{JQ}$ is the ordered exponent of $JQ$. We claim that under the assumptions of Theorem \ref{theorem on sufficient conditions} $X_{JQ}$ is close to the identity operator. Indeed, the operator $\int_r^tQ(s)\,ds$ is symmetric and has zero trace, hence  $\norm[\int_r^tQ(s)\,ds] = \sqrt{\left(\int_r^t p(s)\,ds\right)^2 + \left(\int_r^t q(s)\,ds\right)^2}\ls e^{-\delta r}$. Apply this norm inequality to explicit formula \eqref{explicit representaion ordered exponent} for the ordered exponent. We have
\begin{gather*}
    \norm[X_{JQ}(r,t) - I] = \norm[\sum_{m\ge 1}\int_r^t JQ(t_1) \int_r^{t_1} JQ(t_2)\int_r^{t_2}\ldots \int_r^{t_{m - 1}} JQ(t_m)\, dt_m\,\ldots dt_3\,dt_2\, dt_1]
    \\
    \le \sum_{m\ge 1}\int_r^t\int_r^{t_1}\ldots\int_r^{t_{m-2}} \prod_{k = 1}^{m-1} \norm[Q(t_k)]\cdot\norm[\int_r^{t_{m - 1}} Q(t_m)\, dt_m]  dt_{m - 1}\ldots dt_2\, dt_1
    \\
    \le \sup_{s\ge r}\norm[\int_r^{s} Q(s)\, ds]\cdot \sum_{m\ge 1}\int_r^t\int_r^{t_1}\ldots\int_r^{t_{m-2}} \prod_{k = 1}^{m-1} \norm[Q(t_k)] dt_{m - 1}\ldots dt_2\, dt_1
    \\
    =\sup_{s\ge r}\norm[\int_r^{s} Q(s)\, ds]\cdot
    \sum_{m\ge 1}\frac{\left(\int_r^t\norm[Q(t_1)]\,dt_1\right)^{m - 1}}{(m-1)!} 
    \\
    \ls e^{-\delta r}\cdot \exp\left[\int_r^t\norm[JQ(t_1)]\,dt_1\right] \le e^{-\delta r}\cdot \exp\left[\int_r^t|p(t_1)| + |q(t_1)|\,dt_1\right]. 
\end{gather*}
This for $t\in [r, r + 2]$ implies $\norm[X_{JQ}(r,t) - I]\ls e^{-\delta r}$ hence $(X_{JQ}(r,t))^*X_{JQ}(r,t) = I + M_r(t)$, where $M_r$ is a matrix-valued function such that $\norm[M_r(t)] \ls e^{-\delta r}$. To conclude the proof, substitute the latter into \eqref{tmp def entropy}:
\begin{align*}
    E_Q(r) = \det\int_{r}^{r + 2}  \left(I + M_r(t)\right)\,dt - 4 =
 \det\left(2I + \int_{r}^{r + 2} M_r(t)\right)\,dt - 4 \ls e^{-\delta r}.
\end{align*}
\subsection{On the sharpness of results}\label{section: on the optimality of the constants}
It Theorems \ref{theorem on equivalence} and \ref{Entropy for extension}  we established  inequalities between the rate of exponential decay of the entropy function $E_Q$ and the width of the strip, where the inverse \Szego function and the Weyl function have proper extensions. In this section we give an example of a potentials for which some of the estimates in these two theorems are precise.  Consider Krein system \eqref{Krein system} with the coefficient $a(r) =e^{-r}$. Then the potential of the corresponding Dirac operator $D_{Q}$ is given by $Q = \left(\begin{smallmatrix}0 & 2e^{-2r}\\2e^{-2r} & 0\end{smallmatrix}\right)$.
Let us show that for this $Q$ the corresponding inverse \Szego function does not extend into $\Omega_{\delta}$ for any $\delta > {\Delta}_E/4$ and, moreover, the equality $4 {\Delta}_{\Pi} = {\Delta}_{E}$ holds.
\medskip

The potential Q is off-diagonal hence the entropy can be calculated by formula \eqref{entropy expression via the g_r function}. In our situation we have $g_r(t) = \int_r^t 2e^{-2t_1}\, dt_1 = e^{-2r}(1 - e^{2(t - r)})$. 
Then \eqref{entropy expression via the g_r function} gives $E_Q(r)\approx e^{-4r}$, $\Delta_E = 4$ and, by Theorem \ref{Entropy for extension}, $\Pi$ extends analytically into $\Omega_1$. Let us show that $\Pi$ does not extend into $\Omega_{\delta}$ for any $\delta > 1$.
The inequality $|P(r,\lambda)| \le |P_*(r,\lambda)|$ in $\ol{\Cm_+}$ and the Gronwall-Bellman inequality for differential equation \eqref{Krein system} for $P_*$  give $|P_*(r,\lambda)|\approx 1$ uniformly in $\lambda\in\ol{\Cm_+}$ and $r\ge 0$. Furthermore, we have
\begin{gather*}
    \left|\frac{\d}{\d r}P_*(r,\ol{\lambda})\right| = \left|a(r)P(r,\ol{\lambda})\right| \stackrel{\eqref{Krein system reflection formula}}{=}\left|a(r) e^{i\ol{\lambda}r}\ol{P_*(r,\lambda)}\right| = e^{-(1 - \Im\lambda)r}\left|P_*(r,\lambda)\right|.
\end{gather*}
Therefore $\norm[\frac{\d}{\d r}P_*(r,\ol{\lambda})]_{L^1(\R_+)}$ is uniformly bounded in $\{\lambda\colon 1 - \eps \ge \Im\lambda \ge 0\}$ for every $\eps > 0$. Consequently $\Pi$ is uniformly bounded in $\Omega_{\delta}$ for every $\delta < 1$ and ${\Delta}_{\Pi} \ge 1 = {\Delta}_{E}/4$. Theorem \ref{theorem on equivalence} then implies $4 {\Delta}_{\Pi} = {\Delta}_{E}$.
\medskip

On the other hand, for $\lambda = ih$, the Krein system has only real-valued coefficients hence $P_*(r, ih)$ is real-valued; $P_*(r, ih)$ is positive for every $r\ge 0$ because  $P_*(r, ih)\neq 0$ for every $r\ge 0$. Therefore the relation 
\begin{gather*}
    \frac{\d}{\d r}P_*(r,-ih) = e^{-(1 - h)r}P_*(r,ih)\approx e^{-(1 - h)r}
\end{gather*}
holds uniformly for $h\ge 0$. It follows that
\begin{gather*}
    |\Pi(-ih)| = \lim_{r\to\infty} |P_*(r,- ih)|\approx 1 + \int_0^{\infty}e^{-(1 - h)\rho}d\rho = \frac{h}{1 - h}.
\end{gather*}
Therefore $|\Pi(-ih)|\to\infty$ as $h\to 1^-$ and $\Pi$ cannot be analytically extended to the point $\lambda = -i$ and especially into any horizontal half-plane containing $-i$. 
\subsection{Resonances. Proof of Theorem \ref{theorem on resonances}}\label{section: resonances}
Recall definition \eqref{definition of class E} of the class $\mathcal{E}$ of potentials with super-exponentially decaying entropy. By Theorem \ref{Entropy for extension}, for any $Q\in\mathcal{E}$ the corresponding inverse \Szego function $\Pi = \Pi_Q$ and the Weyl function $m=m_Q$ extend into the whole complex plane. We denote these extensions with the same symbols $\Pi$ and $m$. By the definition, the scattering resonances of $D_Q$ are the poles of $m$. Let us show that they are precisely the zeroes of $\Pi$. Indeed, $Q\in \mathcal{E}$ iff $-Q\in \mathcal{E}$ hence relation \eqref{Weyl function via the Szego function} implies that if $z$ is a pole of $m_Q$ of multiplicity $n$, then $z$ is a zero of $\Pi$ of multiplicity not greater than $n$. To establish that the multiplicities are the same we need to show that $\Pi$ and $\hat \Pi$ have no common zeroes, which in its turn follows from
\begin{gather}\label{relation between two szego functions}
    \ol{\Pi(\ol{z})}\hat{\Pi}(z) + \Pi(z)\ol{\hat{\Pi}(\ol{z})} = 2, \qquad z\in \Cm.
\end{gather}
When $Q$ has compact support the latter relation simply follows from (75) in \cite{Denisov2006}; in our situation we need to consider the dual regularized Krein system more carefully.
The coefficients of the dual regularized Krein system are not much different from the original system of \eqref{diff equation for tilde P_*} and \eqref{diff equation for tilde P}. Namely (see the remark after Lemma 7.3 in \cite{Bessonov2021}), we need to change the sign of $f_1$ and leave $f_2$ the same. In a matrix form two regularized systems become
\begin{gather*}
    \frac{\d}{\d r}
    \begin{pmatrix}
    \tilde{P}_r^*
    \\
    \tilde{P}_r
    \end{pmatrix}
    =
    \begin{pmatrix}
    izf_2 & (z -i)f_1
    \\
    (z + i)\ol{f_1} & iz(1 - f_2)
    \end{pmatrix}
    \begin{pmatrix}
    \tilde{P}_{r}^*
    \\
    \tilde{P}_r
    \end{pmatrix},
    \\
    \frac{\d}{\d r}
    \begin{pmatrix}
    \tilde{P}_{r,d}^*
    \\
    \tilde{P}_{r,d}
    \end{pmatrix}
    =
    \begin{pmatrix}
    izf_2 & -(z -i)f_1
    \\
    -(z + i)\ol{f_1} & iz(1 - f_2)
    \end{pmatrix}
    \begin{pmatrix}
    \tilde{P}_{r,d}^*
    \\
    \tilde{P}_{r,d}
    \end{pmatrix},
\end{gather*}
where the functions with the subindex $d$ form the regularized dual system. Denote the matrix in the first equality by $V$. A simple transformation gives
\begin{gather*}
    \frac{\d}{\d r}
    \begin{pmatrix}
    \tilde{P}_r^*
    \\
    \tilde{P}_r
    \end{pmatrix}
    =
    V
    \begin{pmatrix}
    \tilde{P}_r^*
    \\
    \tilde{P}_r
    \end{pmatrix},
    \quad 
    \frac{\d}{\d r}
    \begin{pmatrix}
    \tilde{P}_{r,d}^*
    \\
    -\tilde{P}_{r,d}
    \end{pmatrix}
    =
    V
    \begin{pmatrix}
    \tilde{P}_{r,d}^*
    \\
    -\tilde{P}_{r,d}
    \end{pmatrix}.
\end{gather*}
Hence the solution $X$ of the $X'= VX$  with $X(0,z) = I$ is given by
\begin{gather*}
    X = \frac{1}{2}
    \begin{pmatrix}
    \tilde{P}_{r}^* & \tilde{P}_{r,d}^*
    \\
    \tilde{P}_{r} & -\tilde{P}_{r,d}
    \end{pmatrix}
    \begin{pmatrix}
    1 & 1
    \\
    1 & -1
    \end{pmatrix}.
\end{gather*}
We have
\begin{align*}
    \frac{1}{2}\left(\tilde{P}_{r}^*(z)\tilde{P}_{r,d}(z) + \tilde{P}_{r,d}^*(z)\tilde{P}_{r}(z) \right) = \det X(r,z) &= \exp\left(\int_0^r\trace V(t,z)\,dt\right) 
    \\
    &= \exp\left(\int_0^r iz\,dt\right)= e^{irz}.
\end{align*}
By \eqref{inversion formula} the latter is equivalent to
\begin{gather*}
    \tilde{P}_{r}^*(z)\ol{\tilde{P}_{r,d}^*(\ol{z})} + \tilde{P}_{r,d}^*(z)\ol{\tilde{P}_{r}^*(\ol{z})} = 2,\qquad z\in \Cm.
\end{gather*}
If $Q\in \mathcal{E} $ then from the proof of Theorem \ref{Entropy for extension} it follows that $\displaystyle\Pi(z) = \lim_{r\to\infty} \tilde{P}_r^*(z) $ and $\displaystyle\hat{\Pi}(z) = \lim_{r\to\infty} \tilde{P}_{r,d}^*(z) $ for every $z\in\Cm$. Hence \eqref{relation between two szego functions} follows and $\Pi(z) = \hat{\Pi}(z) = 0$ is not possible.
\medskip

Thus the resonances are the zeroes of the entire function $\Pi$. The counting function of the zeroes is tightly connected to the order of $\Pi$.
\begin{lemma}\label{order of the szego function for resonances}
If $Q\in \mathcal{E}_{\alpha}$ for some $\alpha > 1$ then $\Pi$ is of order not greater than $\displaystyle\frac{\alpha}{\alpha - 1}$. 
\end{lemma}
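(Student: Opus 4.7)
The plan is to revisit the quantitative growth bounds on the regularized Krein solutions $\tilde P_r^*(z)$ used in the proof of Theorem \ref{Entropy for extension} and to push them through to the super-exponential setting, showing that the growth of $|\Pi(z)|$ in the lower half-plane is at most $\exp(C|z|^{\alpha/(\alpha-1)})$.

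The first step is to translate the hypothesis $Q\in\mathcal{E}_\alpha$ into an estimate on the canonical entropy $\K=\K_H$. Since $\bK_H(r)=\sum_{n\ge 0} E_Q(r+n)$ and $\alpha>1$, the convexity bound $(r+n)^\alpha\ge r^\alpha+\alpha r^{\alpha-1}n$ shows that $\bK_H(r)=O(\exp(-r^\alpha))$; Theorem \ref{known theorem on two entropies} then yields $\K(r)=O(\exp(-r^\alpha))$ as $r\to\infty$. In particular $\sqrt{|\K'|}\in L^1(\R_+)$, so Lemma \ref{entropy and absolute continuity} already gives the bound $|\Pi(z)|\ls e^{C(|z|+1)}$ uniformly in $\ol{\Cm_+}$, which is more than enough in the upper half-plane.

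Next, I would repeat the derivation of \eqref{trivial inequality for the differential equation}: using differential equation \eqref{diff equation for tilde P_*}, the coefficient estimate from Lemma \ref{coefficients bound}, the reflection formula \eqref{inversion formula}, and the pointwise bound \eqref{exponential bound}, one obtains, for every $z$ with $\Im z\le 0$,
\begin{gather*}
    \left|\frac{\d}{\d r}\tilde P_r^*(z)\right|\ls (|z|+1)\left(\sqrt{|\K'(r/2)|}+|\K'(r/2)|\right) e^{r|\Im z|}e^{c(|z|+1)}.
\end{gather*}
Integrating from $0$ to $\infty$ and using Lemma \ref{convergence of the regularized system} (together with its analytic continuation established in the proof of Theorem \ref{Entropy for extension}) produces
\begin{gather*}
    |\Pi(z)|\ls (|z|+1)\,e^{c(|z|+1)}\left(1+\int_0^{\infty}\left(\sqrt{|\K'(\rho/2)|}+|\K'(\rho/2)|\right)e^{\rho|\Im z|}\,d\rho\right).
\end{gather*}

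The heart of the argument, and the main obstacle, is the Laplace-type estimation of this integral. Split $[0,\infty)$ into the intervals $[2n,2n+2]$; since $\K$ is nonincreasing, $\int_{2n}^{2n+2}|\K'(\rho/2)|\,d\rho=2(\K(n)-\K(n+1))\le C e^{-n^\alpha}$. Bounding $e^{\rho|\Im z|}\le e^{2(n+1)|\Im z|}$ on each such interval and applying Cauchy--Schwarz to the square-root piece gives
\begin{gather*}
    \int_0^{\infty}\left(\sqrt{|\K'(\rho/2)|}+|\K'(\rho/2)|\right)e^{\rho|\Im z|}\,d\rho \ls e^{2|\Im z|}\sum_{n\ge 0} e^{-n^\alpha/2+2n|\Im z|}.
\end{gather*}
Maximizing the exponent $-n^\alpha/2+2n|\Im z|$ over $n\ge 0$ gives a critical point $n_*=(4|\Im z|/\alpha)^{1/(\alpha-1)}$, at which the value equals $C_\alpha|\Im z|^{\alpha/(\alpha-1)}$ with $C_\alpha=\frac{2(\alpha-1)}{\alpha}(4/\alpha)^{1/(\alpha-1)}$; a standard Laplace estimate then shows the sum is bounded by $\exp\bigl((C_\alpha+o(1))|\Im z|^{\alpha/(\alpha-1)}\bigr)$ as $|\Im z|\to\infty$. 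Combining this with the bound $|\Pi(z)|\ls e^{C(|z|+1)}$ in $\Cm_+$, and noting that $\alpha/(\alpha-1)>1$, we conclude that $|\Pi(z)|\ls \exp(C|z|^{\alpha/(\alpha-1)})$ throughout $\Cm$ for $|z|$ large. Since this holds for every $\alpha$ witnessing $Q\in\mathcal{E}_\alpha$, the order of $\Pi$ is at most $\alpha/(\alpha-1)$, as claimed.
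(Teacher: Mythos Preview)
Your proof is correct and follows essentially the same route as the paper: bound $|\Pi(z)|$ in $\ol{\Cm_+}$ via Lemma~\ref{entropy and absolute continuity}, then in the lower half-plane integrate the estimate \eqref{trivial inequality for the differential equation} and perform a Laplace-type optimization of the resulting integral, using $\K(r)\ls\exp(-r^{\alpha})$. Your handling of the integral by splitting into unit intervals and applying Cauchy--Schwarz is in fact a bit more careful than the paper's one-line passage from $\K(r)\ls\exp(-r^{\alpha})$ to a pointwise bound on the integrand, but the two arguments are otherwise the same.
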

\begin{proof}
We need to show that for $z\in \Cm$, the inequality 
\begin{gather*}
    |\Pi(z)|\ls \exp\left(C|z|^{\frac{\alpha}{\alpha - 1}}\right)
\end{gather*}
holds for some constant $C$. From Lemma \ref{entropy and absolute continuity} we know the stronger bound $\Pi(z)\ls \exp\left(C|z|\right)$ in $\Cm_+$. The limit relation $\displaystyle\Pi(z) = \lim_{r\to\infty} \tilde{P}_r^*(z) $ holds for every $z\in\Cm$ hence \eqref{trivial inequality for the differential equation} gives
\begin{gather*}
    |\Pi(z)| \ls (|z| + 1)e^{c(|z| + 1)} \int_0^{\infty}\left(\sqrt{|\K'(\rho/2)|} + |\K'(\rho/2)|\right)e^{\rho|\Im z|}\,d\rho,\quad \Im z \le 0.
\end{gather*}
We have $E_Q(r)\ls \exp\left(-r^{\alpha}\right)$ hence $\K(r)\ls \exp\left(-r^{\alpha}\right)$. It follows that
\begin{align*}
    |\Pi(z)|&\ls (|z| + 1)e^{c(|z| + 1)}\int_0^{\infty}\exp\left(\rho|\Im z| - \rho^{\alpha}/2^{\alpha + 1}\right)\,d\rho
    \\
    &\ls (|z| + 1)e^{c(|z| + 1)}\exp\left(c_1|\Im z|^{\frac{\alpha}{\alpha - 1}}\right) \ls \exp\left(c_2|z|^{\frac{\alpha}{\alpha - 1}}\right).
\end{align*}
\end{proof}
\begin{proof}[Proof of Theorem \ref{theorem on resonances}]
Let $Q_1$ and $Q_2$ be two potentials from $\mathcal{E}$ with the same set of resonances with multiplicities $\{z_k\}_{k = 0}^{\infty}$. Let $\sigma_i, \Pi_i, m_i$ and $H_i$, for $i = 1,2$ be the spectral measures, the inverse \Szego functions, the Weyl functions and the Hamiltonians corresponding to $Q_1$ and $Q_2$. Denote by $E_n$ the elementary  Weierstrass factor,
\begin{align*}
    E_n(z) =
    \begin{cases} 
    (1-z), & n=0, 
    \\
    (1-z)\exp \left( \frac{z}{1}+\frac{z^2}{2}+\cdots+\frac{z^n}{n} \right), & n > 0.
    \end{cases}
\end{align*}
By Lemma \ref{order of the szego function for resonances}, both $\Pi_1$ and $\Pi_2$ are of finite exponential order; let $n$ be an integer such that  the order of $\Pi_1$ and $\Pi_2$ does not exceed $n$. Hadamard factorization theorem, see Section 4.2 in \cite{Levin}, gives that 
\begin{gather*}
    \Pi_1(z) = e^{g_1(z)} \prod_{k = 0}^{\infty}E_n\left(\frac{z}{z_k}\right)
    \;\text{ and } \;
    \Pi_2(z) = e^{g_2(z)} \prod_{k = 0}^{\infty}E_n\left(\frac{z}{z_k}\right),
\end{gather*}
where $g_1$ and $g_2$ are polynomials of degree not greater than $n$.
Hence $\Pi_1(z)\Pi_2^{-1}(z) = e^{g(z)}$ with $g = g_1 - g_2$.
By Lemma \ref{entropy and absolute continuity} there exists $c > 0$ such that $|\Pi_1(z)|\ls e^{c|z|}$ and $e^{-c|z|}\ls  |\Pi_2(z)|$ for $z\in \ol{\Cm_+}$.
Consequently, $\displaystyle\left|e^{g(z)}\right|\ls e^{2c|z|}$ or $\Re g(z) \ls |z| + 1$ for $z\in \ol{\Cm_+}$, which is possible only if $\deg g \le 1$. Let $g(z) = a_1z + a_2$, where $a_1$ and $a_2$ are some complex constants. Because of the \Szego class assumption \eqref{Szego class definition} we have
\begin{gather*}
    \frac{\Re g}{x^2 + 1} = \frac{\log|\Pi_1| - \log|\Pi_2|}{x^2 + 1}\in L^1(\R). 
\end{gather*}
It follows that $\Re a_1 = 0$ and $|\Pi_1(x)| = |\Pi_2(x)| e^{\Re a_2}$, $x\in \R$. Next, by Lemma \ref{entropy and absolute continuity}, both $\sigma_1$ and $\sigma_2$ are a.\,c.\ hence $d\sigma_1 = e^{-2\Re a_2}d\sigma_2$; relation \eqref{spectral measure and weyl function} then gives $m_1 = am_2 + b$ for some constants $a > 0$ and $b$. Proof of Lemma 2.4 in \cite{Bessonov2021} implies $H_2(t) = A^* H_1(t) A$ with $A = \left(\begin{smallmatrix}1/\sqrt{a} &b/\sqrt{a}\\0 &\sqrt{a}\end{smallmatrix}\right)$.
For $t = 0$ it becomes $ I = A^*A = \left(\begin{smallmatrix} 1/a &b/a \\   b/a &b^2/a + a \end{smallmatrix}\right)$. Hence $a = 1$, $b = 0$ and the Weyl functions $m_1$ and $m_2$ coincide. Therefore $Q_1= Q_2$ and the proof is finished.
\end{proof}

\bibliographystyle{plain}
\bibliography{references}

\end{document}